\documentclass{amsart}

\newtheorem{theorem}[equation]{Theorem}
\newtheorem{lemma}[equation]{Lemma}
\newtheorem{corollary}[equation]{Corollary}
\newtheorem{proposition}[equation]{Proposition}

\numberwithin{equation}{section}

\usepackage{amscd,amsmath,amssymb,verbatim}

\begin{document}

\title[Distinguished-root formulas]{Distinguished-root formulas for generalized \\ Calabi-Yau hypersurfaces}
\author{Alan Adolphson}
\address{Department of Mathematics\\
Oklahoma State University\\
Stillwater, Oklahoma 74078}
\email{adolphs@math.okstate.edu}
\author{Steven Sperber}
\address{School of Mathematics\\
University of Minnesota\\
Minneapolis, Minnesota 55455}
\email{sperber@math.umn.edu}
\date{\today}
\keywords{}
\subjclass{}
\begin{abstract}
By a ``generalized Calabi-Yau hypersurface'' we mean a hypersurface in ${\mathbb P}^n$ of degree $d$ dividing $n+1$. The zeta function of a generic such hyper\-surface has a reciprocal root distinguished by minimal $p$-divisibility.  We study the $p$-adic variation of that distinguished root in a family and show that it equals the product of an appropriate power of $p$ times a product of special values of a certain $p$-adic analytic function ${\mathcal F}$.  That function ${\mathcal F}$ is the $p$-adic analytic continuation of the ratio $F(\Lambda)/F(\Lambda^p)$, where $F(\Lambda)$ is a solution of the $A$-hypergeometric system of differential equations corresponding to the Picard-Fuchs equation of the family.
\end{abstract}
\maketitle

\section{Introduction}

Dwork\cite{D1.1,D1.5} was the first to obtain $p$-adic analytic formulas for eigenvalues of Frobenius.  In \cite[Section~6]{D1.5}, Dwork developed an analytic theory of Frobenius for families of hypersurfaces: Frobenius acts semi-linearly on the space of local solutions of the Picard-Fuchs equation and preserves $p$-adic growth conditions.  In particular, $p$-adically bounded local solutions and $p$-adic unit eigenvalues of Frobenius are closely related.  In this article, we apply these ideas (with some modifications) to obtain $p$-adic analytic formulas for the unique eigenvalue of minimal $p$-divisibility for what we call generalized Calabi-Yau hypersurfaces.

The Legendre family of elliptic curves was the first case to be studied in detail.  In characteristic zero the Picard-Fuchs equation is of order 2, but Igusa\cite{Ig} noted that in odd characteristic $p$ it has only one series solution (up to $p$-th powers).   The truncation of the unique series solution ${}_2F_1(\frac{1}{2},\frac{1}{2};1;\Lambda)$ in characteristic zero at the $(p-1)$-st term makes sense in characteristic~$p$ and is the unique solution in characteristic~$p$.  Furthermore, for the elliptic curve in characteristic $p$, the number of rational points is determined modulo $p$ by this truncation.  Dwork used the Frobenius action on local solutions of Picard-Fuchs to give a much more precise result, namely, a formula for the unit root of the zeta function of a nonsupersingular elliptic curve of the Legendre family in terms of special values of the $p$-adic analytic continuation of the ratio ${}_2F_1(\frac{1}{2},\frac{1}{2};1;\Lambda)/{}_2F_1(\frac{1}{2},\frac{1}{2};1;\Lambda^p)$ (\cite[Eq.~(6.29)]{D1.5}).  Similar formulas have been found as well for the Dwork family of hypersurfaces (Dwork\cite{D1.5}, J-D Yu\cite{Y}), more general families of varieties (N. Katz\cite{K2}), and for families of toric exponential sums (Dwork\cite{D3}, \cite{AS01,AS02,AS3}).

Novel features of this work are that we obtain explicit fomulas for very general families of generalized Calabi-Yau hypersurfaces where the defining form is subject only to condition (1.9) below.  We avoid in particular any hypothesis of nonsingularity.  Dwork had suggested this might in fact be possible in his 1962 International Congress talk\cite[Section 5]{D1.1}.  This is achieved here in part by adopting the $A$-hypergeometric point of view, which makes it easy to write down the explicit solution (1.15) of the Picard-Fuchs equation satisfied by the differential form (1.10), and by avoiding any computations involving the cohomology of the hypersurfaces in the family.  

In addition, we apply here the dual theory associated with Dwork's $\theta_\infty$-splitting function.  While this is technically more complicated than the dual theory associated with the $\theta_1$-splitting function that Dwork used in \cite{D1.3}, the advantage is that our results are valid for all primes rather than just all sufficiently large primes.

We proceed now to make precise the main results.  Let
\begin{equation}
f_\lambda(x_0,\dots,x_n) = \sum_{j=1}^N \lambda_j x^{{\bf a}_j}\in{\mathbb F}_q^\times[x_0,\dots,x_n]
\end{equation}
be a homogeneous polynomial of degree $d\geq 2$ over the finite field ${\mathbb F}_q$, $q=p^a$, $p$ a prime.  Let ${\mathbb N}$ denote the set of nonnegative integers.  For each $j$ we write ${\bf a}_j = (a_{0j},\dots,a_{nj})\in{\mathbb N}^{n+1}$ with $\sum_{i=0}^n a_{ij} = d$ and $x^{{\bf a}_j} = x_0^{a_{0j}}\cdots x_n^{a_{nj}}$.  Let $X_\lambda\subseteq{\mathbb P}_{{\mathbb F}_q}^n$ be defined by the vanishing of $f_\lambda$ and let $X'_\lambda\subseteq{\mathbb A}_{{\mathbb F}_q}^{n+1}$ be the affine cone over $X_\lambda$.  By Ax\cite{A} we have for all $s$
\begin{equation}
{\rm card}\;X_\lambda'({\mathbb F}_{q^s}) \equiv 0\pmod{q^{\mu s}}, 
\end{equation}
where $\mu$ is the least nonnegative integer that is greater than or equal to $\frac{n+1}{d}-1$.  Equivalently,
\begin{equation}
{\rm card}\;X_\lambda({\mathbb F}_{q^s})\equiv \frac{1}{1-q^s} \pmod{q^{\mu s}}
\end{equation}
for all $s$.  

This latter congruence can be expressed in terms of the zeta function of~$X_\lambda$.  Define a function $P_\lambda(t)$ by
\[ P_\lambda(t) = \big( Z(X_\lambda/{\mathbb F}_q,t)(1-t)(1-qt)\cdots(1-q^{n-1}t) \big)^{(-1)^n}. \]
When the fiber $X_\lambda$ is smooth, $P_\lambda(t)$ is the characteristic polynomial of Frobenius acting on middle-dimensional primitive cohomology.  In this case, $P_\lambda(t)$ has degree $d^{-1}\big( (d-1)^{n+1} + (-1)^{n+1}(d-1)\big)$.  In the general setting, we have only that $P_\lambda(t)$ is a rational function (Dwork\cite{D1}).  
The congruence (1.3) is equivalent to the assertion that all reciprocal zeros $\rho$ and reciprocal poles $\sigma$ of $P_\lambda(t)$ satisfy
\begin{equation}
{\rm ord}_q\:\rho,{\rm ord}_q\:\sigma\geq\mu, 
\end{equation}
where ${\rm ord}_q$ is the $p$-adic valuation normalized by ${\rm ord}_q\:q = 1$ (Ax\cite{A}, Katz\cite[Proposition~2.4]{K}).  

The integer $\mu$ has Hodge-theoretic significance.  Let $Y\subseteq{\mathbb P}^n_{{\mathbb C}}$ be a smooth hypersurface of degree $d$ and let $\{h^{i,n-1-i}\}_{i=0}^{n-1}$ be the Hodge numbers of the primitive part of middle-dimensional cohomology of~$Y$ (the~$h^{i,n-1-i}$ depend only on $n$ and~$d$).  Then $i=\mu$ is the smallest value of $i$ for which $h^{i,n-1-i}\neq 0$ and, as such, is referred to as the Hodge type of $Y$.  Furthermore, for $X_\lambda$ smooth over ${\mathbb F}_q$ the rational function $P_\lambda(t)$ is a polynomial and, by Illusie\cite{I}, the generic smooth $X_\lambda$ has exactly $h^{\mu,n-1-\mu}$ reciprocal zeros $\rho$ satisfying ${\rm ord}_q\:\rho = \mu$.  

In this paper we focus our attention on cases where $h^{\mu,n-1-\mu}=1$, i.e., where the polynomial $P_\lambda(t)$ has a unique reciprocal zero $\rho$ with smallest $q$-ordinal $\mu$ for generic smooth $X_\lambda$.  By standard formulas for Hodge numbers, this occurs when $d$ is a divisor of $n+1$.  From the definition of $\mu$, we then have 
\begin{equation}
n+1 = d(\mu+1),
\end{equation}
which we assume from now on.  We refer to these varieties as generalized Calabi-Yau hypersurfaces.  (The case $\mu=0$ is the classical case of projective Calabi-Yau hypersurfaces.)  Assuming only this condition, one can refine the description of~$P_\lambda(t)$.

For $j=1,\dots,N$, put 
\[ {\bf a}_j^+ = ({\bf a}_j,1) = (a_{0j},a_{1j},\dots,a_{nj},1)\in{\mathbb N}^{n+2}. \]
Let $\Lambda_1,\dots,\Lambda_N$ be indeterminates and set
\begin{equation}
H(\Lambda) = \sum_{\substack{u=(u_1,\dots,u_N)\in{\mathbb N}^N\\ \sum_{j=1}^N u_j{\bf a}_j^+ = (p-1)(1,\dots,1,\mu+1)}} \frac{\Lambda_1^{u_1}\cdots\Lambda_N^{u_N}}{u_1!\cdots u_N!}\in\big({\mathbb Q}\cap{\mathbb Z}_p\big)[\Lambda_1,\dots,\Lambda_N].
\end{equation}
Note that the conditions on the summation imply $0\leq u_j\leq p-1$ for $j=1,\dots,N$.  We denote by $\bar{H}(\Lambda)\in{\mathbb F}_p[\Lambda_1,\dots,\Lambda_N]$ the reduction mod $p$ of $H(\Lambda)$.

We express the rational function $P_\lambda(t)$ as a ratio $P_\lambda(t)=P^{(1)}_\lambda(t)/P^{(2)}_\lambda(t)$, where $P^{(1)}_\lambda(t)$ and $P^{(2)}_\lambda(t)$ are  relatively prime polynomials with integer coefficients and constant term $1$.  By (1.4) we have
\[ P_\lambda^{(1)}(q^{-\mu}t),P_\lambda^{(2)}(q^{-\mu}t)\in 1+t{\mathbb Z}[t]. \]
We prove the following result in Section 7.
\begin{proposition}
Let $f_\lambda$ be as in $(1.1)$ and suppose $(1.5)$ holds.  Let $\hat{\lambda}\in{\mathbb Q}_p(\zeta_{q-1})^N$ be the Teichm\"uller lifting of $\lambda$.   Then $P_\lambda^{(2)}(q^{-\mu}t) \equiv 1\pmod{q}$ and
\[ P_\lambda^{(1)}(q^{-\mu}t)\equiv 1 -  t \prod_{i=0}^{a-1} \big((-1)^{\mu+1}H(\hat{\lambda}^{p^i})\big) \pmod{p}. \]
\end{proposition}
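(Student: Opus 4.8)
The plan is to read off $P_\lambda(t)\bmod p$ from the reduction mod $p$ of the Dwork–Frobenius operator attached to the toric exponential sum whose exponent set is $\{\mathbf a_j^+:1\le j\le N\}$, normalizing everything by $q^\mu$ via the Ax–Katz bound (1.4). Writing $x_{n+1}f_\lambda(x)=\sum_{j=1}^N\lambda_j x^{\mathbf a_j^+}$, the counts $\operatorname{card}X'_\lambda(\mathbb F_{q^s})$ — equivalently, by $\operatorname{card}X'_\lambda(\mathbb F_{q^s})=1+(q^s-1)\operatorname{card}X_\lambda(\mathbb F_{q^s})$ and the definition of $P_\lambda$, the rational function $P_\lambda(t)$ itself — are built from the toric sums $S_s=\sum_{x\in\mathbb G_m^{n+2}(\mathbb F_{q^s})}\Psi_s\bigl(\sum_j\hat\lambda_j x^{\mathbf a_j^+}\bigr)$ together with the analogous sums over the lower-dimensional coordinate strata. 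By the constructions of the earlier sections (using Dwork's $\theta_\infty$-splitting so as to work at every prime), $P_\lambda(t)$ coincides, up to the trivial factors $(1-q^it)$, with $\det(1-t\alpha_q\mid H)^{\pm1}$ for a finite-dimensional cohomology space $H$ whose basis is indexed by the interior lattice points of $\operatorname{conv}\{\mathbf a_j^+\}$, where $\alpha_q=\alpha\circ\alpha^\sigma\circ\cdots\circ\alpha^{\sigma^{a-1}}$ and $\alpha$ is a completely continuous $\sigma^{-1}$-linear operator built from $\prod_j\theta_\infty(\hat\lambda_j x^{\mathbf a_j^+})$ on a $p$-adic Banach space.

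The key observation is that, since $\sum_{i=0}^n a_{ij}=d$ and $n+1=d(\mu+1)$, the cone $\mathbb R_{\ge0}\{\mathbf a_j^+\}$ lies in the hyperplane $\sum_{i=0}^n y_i=d\,y_{n+1}$, and — grading by the functional $\tfrac1{d+1}\sum_{i=0}^{n+1}y_i$, which takes the value $1$ on each $\mathbf a_j^+$ — its unique interior lattice point of smallest weight is $v_0:=(1,\dots,1,\mu+1)$, of weight $\mu+1$, with no interior point of weight in $\{1,\dots,\mu\}$ and every other interior point of weight $\ge\mu+2$. Hence on $H$ the operator $\alpha$, reduced mod $p$ after dividing by $q^\mu$, is $1\times1$ up to nilpotents connecting only strictly higher weights: the one relevant matrix entry is the coefficient of $x^{pv_0-v_0}=x^{(p-1)v_0}$ in $\prod_j\theta_\infty(\hat\lambda_j x^{\mathbf a_j^+})$. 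Using that the low-order coefficients of $\theta_\infty$ agree with those of $\exp(\pi t)$, namely $\pi^m/m!$ for $0\le m\le p-1$ with $\pi^{p-1}=-p$, and that the exponent condition forces $0\le u_j\le p-1$ and $\sum_j u_j=(p-1)(\mu+1)$, this entry equals
\[
\sum_{\substack{u\in\mathbb N^N\\ \sum_j u_j\mathbf a_j^+=(p-1)v_0}}\ \prod_j\frac{\pi^{u_j}}{u_j!}\,\hat\lambda_j^{u_j}\ =\ (\pi^{p-1})^{\mu+1}H(\hat\lambda)\ =\ (-p)^{\mu+1}H(\hat\lambda).
\]
Iterating the $\sigma$-twisted composition $a$ times, the corresponding eigenvalue of $\alpha_q$ is $(-1)^{a(\mu+1)}q^{\mu+1}\prod_{i=0}^{a-1}H(\hat\lambda^{p^i})$, while every other eigenvalue of $\alpha_q$ on $H$ has $q$-ordinal $\ge\mu+2$; after the single Tate twist by $q$ relating $S_s$ to the primitive middle cohomology of $X_\lambda$ (equivalently, after dividing out the factors $(1-q^it)$ in the definition of $P_\lambda$), the distinguished reciprocal zero of $P_\lambda^{(1)}$ is $(-1)^{a(\mu+1)}q^\mu\prod_{i=0}^{a-1}H(\hat\lambda^{p^i})$ and all remaining reciprocal zeros of $P_\lambda^{(1)}$, and all reciprocal poles of $P_\lambda^{(2)}$, have $q$-ordinal $\ge\mu+1$. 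Since the coefficients of $P_\lambda^{(1)}(q^{-\mu}t)$ and $P_\lambda^{(2)}(q^{-\mu}t)$ are rational integers, this yields $P_\lambda^{(1)}(q^{-\mu}t)\equiv 1-t\prod_{i=0}^{a-1}\bigl((-1)^{\mu+1}H(\hat\lambda^{p^i})\bigr)\pmod p$, and — since a rational integer of positive $q$-ordinal is divisible by $q$ — $P_\lambda^{(2)}(q^{-\mu}t)\equiv 1\pmod q$.

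The step I expect to be the main obstacle is showing that nothing else contributes a reciprocal zero or pole of $P_\lambda$ at $q$-ordinal exactly $\mu$: in particular that the lower-dimensional coordinate strata $X_\lambda\cap\{x_{i_1}=\cdots=x_{i_k}=0\}$, which are again degree-$d$ hypersurfaces and for which the naive Ax–Katz exponent can itself equal $\mu$, do not inject a second $q^{-\mu}$-unit reciprocal root, and that the (in general nontrivial) denominator $P_\lambda^{(2)}$ really does live strictly above level $\mu$. This is exactly where the hypothesis (1.9) on the support of $f_\lambda$ and the one-dimensionality of the $A$-hypergeometric solution (1.15) under (1.5) — rather than a cohomological argument in the style of Illusie — do the real work, and it is the part that genuinely needs the Banach-space estimates of the earlier sections; by contrast the sign $(-1)^{\mu+1}$ and the product over the Frobenius conjugates $\hat\lambda^{p^i}$ should fall out cleanly, the former from the $(\mu+1)$-fold occurrence of $\pi^{p-1}=-p$ and the latter from the $\sigma$-semilinearity of $\alpha$.
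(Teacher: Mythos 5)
Your calculation of the distinguished coefficient is essentially the right one, and the sign bookkeeping is sound: the paper's Lemma 5.7 is exactly your identity $\theta_{-(p-1)\mathbf{b}}(\Lambda)\equiv(-p)^{\mu+1}H(\Lambda)\pmod{p^{\mu+2}}$, and the paper's observation that $(1,\dots,1,\mu+1)$ is the unique lattice point $u$ in the relevant cone with $u_i>0$ for all $i$ and $u_{n+1}=\mu+1$ is the same combinatorial fact you exploit. But the framework you wrap this computation in does not actually apply here, and the place you flag as ``the main obstacle'' is a genuine gap that your proof does not close.

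The framework problem: you assert that $P_\lambda(t)$ coincides (up to trivial factors) with $\det(1-t\alpha_q\mid H)^{\pm1}$ for a \emph{finite-dimensional} cohomology space $H$ with basis indexed by the interior lattice points of $\operatorname{conv}\{\mathbf a_j^+\}$, and then argue that the reduction of $\alpha$ is ``$1\times1$ up to nilpotents.'' That finite-dimensional cohomological description is only available under a nondegeneracy hypothesis on $f_\lambda$ (nonsingularity with respect to its Newton polytope), which is precisely what this paper is at pains to avoid --- the Introduction states explicitly that no nonsingularity is assumed and that $P_\lambda(t)$ is in general only a rational function. Without nondegeneracy there is no finite-dimensional $H$ with that basis, no ``up to nilpotents'' reduction, and no polynomial $\det(1-t\alpha_q\mid H)$. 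The paper instead never passes to cohomology: it applies Dwork's trace formula to the Frobenius on the full ($p$-adic Banach) spaces $L_0^I$, obtains the always-valid factorization $(7.12)$ of $P_\lambda(qt)$ as an alternating product of characteristic power series $\det(I-q^{n+1-|I|}t\alpha_{\hat\lambda}\mid L_0^I)$ over subsets $I\subseteq\{0,\dots,n\}$, and controls each factor via a Newton-polygon lower bound for compact operators (Proposition 7.13, citing \cite[Proposition 4.2]{AS2}). The trace congruence you compute is then proved directly (Proposition 7.16) as a congruence for $\operatorname{Tr}(\alpha_{\hat\lambda}\mid L_0^\Gamma)$ modulo $pq^{\mu+1}$, not as an exact eigenvalue of a finite-dimensional operator.

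The unclosed gap: you correctly identify that everything hinges on showing that every factor other than the top one contributes only reciprocal zeros of $q$-ordinal $\geq\mu+2$ (so that the denominator $P_\lambda^{(2)}$ is trivial mod $q$ and no lower stratum injects a second root at level $\mu$), and you state that ``this is the part that genuinely needs the Banach-space estimates.'' But you then leave it unproved. It is not automatic: the degree-$d$ hypersurface obtained by setting coordinates to zero can indeed have its own Ax exponent equal to $\mu$. The paper's closing of this gap is the concrete arithmetic in Proposition 7.13(b): the reciprocal zeros of $\det(I-q^{n+1-|I|}t\alpha_{\hat\lambda}\mid L_0^I)$ have $q$-ordinal at least $n+1-|I|+\lceil|I|/d\rceil$, and for $|I|\le n$ this quantity is $\ge\mu+2$ because $n+1=d(\mu+1)$ and $d\ge 2$. (Note also that the hypothesis $(1.9)$, which you invoke as possibly doing ``the real work'' here, plays no role in Proposition 1.7 --- it is only used later to guarantee $H\not\equiv 0$ and for the analytic continuation in Theorem 1.16; Proposition 1.7 needs only $(1.5)$.) Until this lower-strata bound is supplied, your argument establishes neither $P_\lambda^{(2)}(q^{-\mu}t)\equiv 1\pmod q$ nor that $P_\lambda^{(1)}(q^{-\mu}t)\bmod p$ has degree at most~1, both of which are required.
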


As an immediate consequence of Proposition 1.7, we get a criterion for the zeta function of a generalized Calabi-Yau hypersurface to have a reciprocal root distinguished by minimal $p$-divisibility.
\begin{proposition}
Under the hypotheses of Proposition $1.7$, the rational function $P_\lambda(t)$ has a unique reciprocal root of $q$-ordinal $\mu$ if and only if $\bar{H}(\lambda)\neq 0$.  Furthermore, when $\bar{H}(\lambda)\neq 0$, that reciprocal root is a reciprocal zero, not a reciprocal pole, of $P_\lambda(t)$.
\end{proposition}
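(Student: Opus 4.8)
The plan is to read the statement off directly from Proposition 1.7 together with the elementary Newton-polygon description of $p$-adic unit reciprocal roots. For $i=1,2$ set $Q^{(i)}(t)=P^{(i)}_\lambda(q^{-\mu}t)$; by (1.4) both lie in $1+t{\mathbb Z}[t]$. Since $P^{(1)}_\lambda$ and $P^{(2)}_\lambda$ are relatively prime, the map $\rho\mapsto q^{-\mu}\rho$ is a multiplicity-preserving bijection from the reciprocal zeros (resp.\ reciprocal poles) of $P_\lambda(t)$ of $q$-ordinal $\mu$ onto the $p$-adic unit reciprocal roots of $Q^{(1)}$ (resp.\ $Q^{(2)}$), because rescaling by $q^{-\mu}$ turns ``$q$-ordinal $\mu$'' into ``$q$-ordinal $0$'', i.e.\ ``$p$-adic unit''. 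So it suffices to count unit reciprocal roots of $Q^{(1)}$ and of $Q^{(2)}$ separately, noting that a reciprocal zero of $P_\lambda$ can never be a reciprocal pole.

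First I would record a standard lemma: if $Q(t)\in 1+t{\mathbb Z}_p[t]$ and $\bar Q(t)\in{\mathbb F}_p[t]$ denotes its reduction mod $p$, then the number of $p$-adic unit reciprocal roots of $Q$, counted with multiplicity, equals $\deg\bar Q$. This is immediate from the Newton polygon of $Q$: its horizontal segment based at the origin has length equal to the largest $j$ for which the coefficient of $t^j$ in $Q$ is a $p$-adic unit, namely $\deg\bar Q$, and by the slope interpretation of the Newton polygon that length is exactly the number of $p$-adic unit reciprocal roots of $Q$.

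Now I apply Proposition 1.7. From $Q^{(2)}(t)\equiv 1\pmod q$ we get $\bar Q^{(2)}=1$ in ${\mathbb F}_p[t]$, hence $Q^{(2)}$ has no unit reciprocal root; equivalently, $P_\lambda(t)$ has no reciprocal pole of $q$-ordinal $\mu$, which already yields the ``furthermore'' clause. Next, Proposition 1.7 gives $\bar Q^{(1)}(t)=1-\bar c\,t$, where $\bar c\in{\mathbb F}_q$ is the reduction of $\prod_{i=0}^{a-1}\big((-1)^{\mu+1}H(\hat\lambda^{p^i})\big)$. Because $\hat\lambda$ is the Teichm\"uller lift of $\lambda$ and $\bar H$ has coefficients in ${\mathbb F}_p$, one has $\overline{H(\hat\lambda^{p^i})}=\bar H(\lambda^{p^i})=\bar H(\lambda)^{p^i}$, so $\bar c=(-1)^{a(\mu+1)}\bar H(\lambda)^{(q-1)/(p-1)}$, which is nonzero precisely when $\bar H(\lambda)\neq 0$. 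Hence, by the lemma: if $\bar H(\lambda)\neq 0$ then $\deg\bar Q^{(1)}=1$, so $Q^{(1)}$ has exactly one unit reciprocal root and therefore $P_\lambda(t)$ has exactly one reciprocal root of $q$-ordinal $\mu$, a reciprocal zero; if $\bar H(\lambda)=0$ then $\bar Q^{(1)}=\bar Q^{(2)}=1$, so $P_\lambda(t)$ has no reciprocal root of $q$-ordinal $\mu$ at all. This is the asserted equivalence.

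Since the proposition is a formal consequence of Proposition 1.7, I do not expect a genuine obstacle. The only points requiring a little care are the Newton-polygon lemma above and the bookkeeping in the first paragraph: keeping the coprimality of $P^{(1)}_\lambda$ and $P^{(2)}_\lambda$ so that reciprocal zeros and reciprocal poles are accounted for by disjoint polynomials, and verifying that the substitution $t\mapsto q^{-\mu}t$ matches the $q$-ordinal-$\mu$ reciprocal roots of $P_\lambda$ with the honest $p$-adic unit reciprocal roots of $Q^{(1)}$ and $Q^{(2)}$.
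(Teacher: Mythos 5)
Your proof is correct and takes the approach the paper implicitly intends: Proposition 1.8 is stated as an ``immediate consequence'' of Proposition 1.7, and your Newton-polygon lemma (number of $p$-adic unit reciprocal roots equals the degree of the mod-$p$ reduction) together with the identity $\overline{H(\hat\lambda^{p^i})}=\bar H(\lambda)^{p^i}$ is precisely the bookkeeping needed to make that immediacy explicit.
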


When $\bar{H}(\lambda)\neq 0$, we denote by $\rho_{\min}(\lambda)$ the unique reciprocal root of $P_\lambda(t)$ having $q$-ordinal $\mu$.  Let $\bar{\mathbb F}_q$ denote an algebraic closure of ${\mathbb F}_q$.  We call the set
\[ \{ \lambda\in\bar{\mathbb F}_q^N \mid \bar{H}(\lambda)\neq 0\} \]
the {\it Hasse domain\/} for the family.

It can happen that the sum defining $H(\Lambda)$ is empty, for example, if $f_\lambda$ is the diagonal hypersurface of degree $d$ dividing $n+1$ and $p\not\equiv 1\pmod{d}$.  To guarantee that for all primes $p$ the polynomial $H(\Lambda)$ is not identically zero, we make the assumption that $\mu+1$ of the vectors $\{{\bf a}_j\}_{j=1}^N$ sum to the vector $(1,\dots,1)$, say,
\begin{equation}
\sum_{j=1}^{\mu+1} {\bf a}_j = (1,\dots,1).
\end{equation}
The monomial $\prod_{j=1}^{\mu+1} (\Lambda_j^{p-1}/(p-1)!)$ then appears in $H(\Lambda)$ and, as a consequence, the subset of $({\mathbb F}_q^\times)^N$ where $\bar{H}(\lambda)\neq 0$ is nonempty.  
Equation (1.9) is equivalent to the condition that $x^{{\bf a}_1}\cdots x^{{\bf a}_{\mu+1}} = x_0x_1\cdots x_n$.  
For example, in the case of Calabi-Yau hypersurfaces where $d=n+1$ and $\mu=0$, this just says that $x_0x_1\cdots x_n$ must be one of the monomials that appear in $f_\lambda$.  Our main goal in this paper is to give a $p$-adic analytic description of $\rho_{\min}(\lambda)$ in terms of $A$-hypergeometric functions when $\bar{H}(\lambda)\neq 0$.

Let $U\subseteq{\mathbb P}_{\mathbb C}^n$ be the open complement of a smooth hypersurface $Y$ defined by a homogeneous polynomial $g$ of degree $d$.  Under the hypothesis~(1.5), there is an $n$-form on $U$ which can be expressed in homogeneous coordinates as
\begin{equation} \frac{\sum_{i=0}^n (-1)^ix_i\,dx_0\cdots\widehat{dx}_i\cdots dx_n}{g^{\mu+1}}. 
\end{equation}
This $n$-form determines a cohomology class in $H^n_{\rm DR}(U)$, and also, by applying the residue map, a cohomology class in $H^{n-1}_{\rm DR}(Y)$.  The one-dimensional space spanned by this cohomology class is the Hodge subspace of ``co-level'' $\mu$.  When $Y$ varies in a family, this cohomology class satisfies a Picard-Fuchs equation.  The $A$-hypergeometric equation that describes the variation of $\rho_{\min}(\lambda)$ when $\bar{H}(\lambda)\neq 0$ is the $A$-hypergeometric version of this Picard-Fuchs equation.

We describe the relevant $A$-hypergeometric system.  Let $A = \{{\bf a}_j^+\}_{j=1}^N$ and 
let $L\subseteq{\mathbb Z}^{N}$ be the lattice of relations on the set $A$:
\[ L= \bigg\{l=(l_1,\dots,l_N)\in{\mathbb Z}^N\mid \sum_{j=1}^N l_j{\bf a}_j^+ = {\bf 0}\bigg\}. \]
For each $l=(l_1,\dots,l_N)\in L$, we define a partial differential operator $\Box_l$ in variables $\{\Lambda_j\}_{j=1}^N$ by
\begin{equation}
\Box_l = \prod_{l_j>0} \bigg(\frac{\partial}{\partial\Lambda_j}\bigg)^{l_j} - \prod_{l_j<0} \bigg(\frac{\partial}{\partial\Lambda_j}\bigg)^{-l_j}.
\end{equation}
For $\beta = (\beta_0,\beta_1,\dots,\beta_{n+1})\in{\mathbb C}^{n+2}$, the corresponding Euler (or homogeneity) operators are defined by
\begin{equation}
 Z_i = \sum_{j=1}^N a_{ij}\Lambda_j\frac{\partial}{\partial\Lambda_j} - \beta_i 
\end{equation}
for $i=0,\dots,n+1$.  The {\it $A$-hypergeometric system with parameter $\beta$\/} consists of Equations (1.11) for $l\in L$ and (1.12) for $i=0,1,\dots,n+1$.

The $A$-hypergeometric system satisfied by the $n$-form (1.10) is obtained by taking the parameter $\beta$ to be (using (1.9) above)
\begin{equation}
{\bf b} := -\sum_{j=1}^{\mu+1} {\bf a}_j^+ = (-1,\dots,-1,-\mu-1)\in{\mathbb C}^{n+2}.
\end{equation}
Let $v=(-1,\dots,-1,0,\dots,0)\in{\mathbb C}^N$ ($-1$ repeated $\mu+1$ times followed by $0$ repeated $N-\mu-1$ times).  Then 
\begin{equation}
\sum_{j=1}^N v_j{\bf a}_j^+ = {\bf b}
\end{equation}
and $v$ has minimal negative support in the terminology of Saito-Sturm\-fels-Taka\-yama\cite{SST}, so by \cite[Proposition~3.4.13]{SST} we get a series solution of this $A$-hyper\-geometric system.  Let $L'$ be the subset of $L$ consisting of all $l=(l_1,\dots,l_N)$ such that $l_j\leq 0$ for $j=1,\dots,\mu+1$ and $l_j\geq 0$ for $j=\mu+2,\dots,N$.  The series solution is
$(\Lambda_1\cdots\Lambda_{\mu+1})^{-1}F(\Lambda)$, where
\begin{equation}
F(\Lambda) = \sum_{l\in L'} \frac{(-1)^{\sum_{j=1}^{\mu+1} l_j} \prod_{j=1}^{\mu+1} (-l_j)!}{\prod_{j=\mu+2}^N l_j!} \prod_{j=1}^N \Lambda_j^{l_j}.
\end{equation}

Since the last coordinate of each ${{\bf a}}_j^+$ equals 1, the condition $l\in L$ implies that $\sum_{j=1}^N l_j = 0$, hence $F(\Lambda)$ is homogeneous of degree 0 in the $\Lambda_j$.  For $j=1,\dots,\mu+1$, the $\Lambda_j$ occur to nonpositive powers in $F(\Lambda)$ and for $j=\mu+2,\dots,N$, the $\Lambda_j$ occur to nonnegative powers in $F(\Lambda)$.  The coefficients of the series $F(\Lambda)$ are integers by \cite[Proposition~5.2]{AS}, therefore it converges and assumes unit valules on the set 
\begin{multline*}
 {\mathcal D} = \{(\Lambda_1,\dots,\Lambda_N)\in {\mathbb C}_p^N\mid \text{$|\Lambda_j|>1$ for $1\leq j\leq \mu+1$} \\ \text{ and $|\Lambda_j|<1$ for $\mu+2\leq j\leq N$}\} 
\end{multline*}
(where ${\mathbb C}_p$ denotes the completion of an algebraic closure of ${\mathbb Q}_p$).  
Let
\begin{multline*}
{\mathcal D}_+ =  \{\Lambda\in {\mathbb C}_p^N\mid \text{$|\Lambda_j|\geq 1$ for $1\leq j\leq \mu+1$}, \\
\text{$|\Lambda_j|\leq 1$ for $\mu+2\leq j\leq N$, and $|(\Lambda_1\cdots\Lambda_{\mu+1})^{-(p-1)}H(\Lambda)| = 1$}\}. 
\end{multline*}
Note that the Laurent polynomial $(\Lambda_1\cdots\Lambda_{\mu+1})^{-(p-1)}H(\Lambda)$ has only nonpositive powers of $\Lambda_j$ for $j=1,\dots,\mu+1$, only nonnegative powers of $\Lambda_j$ for $j=\mu+2,\dots,N$, and constant term~$\big((p-1)!\big)^{-(\mu+1)}$.  This implies that $(\Lambda_1\cdots\Lambda_{\mu+1})^{-(p-1)}H(\Lambda)$ assumes unit values on ${\mathcal D}$, hence ${\mathcal D}\subseteq{\mathcal D}_+$.  

Our main result is the following.
\begin{theorem}
Under hypotheses $(1.5)$ and $(1.9)$, the ratio $F(\Lambda)/F(\Lambda^p)$ extends to an analytic function ${\mathcal F}(\Lambda)$ on~${\mathcal D}_+$.  Let $\lambda\in({\mathbb F}_q^\times)^N$ and let $\hat{\lambda}\in{\mathbb Q}_p(\zeta_{q-1})^N$ be its Teichm\"uller lifting.  If $\bar{H}(\lambda)\neq 0$, then $\hat{\lambda}^{p^i}\in{\mathcal D}_+$ for $i=0,\dots,a-1$ and
\[ \rho_{\min}(\lambda) = q^\mu\prod_{i=0}^{a-1} {\mathcal F}(\hat{\lambda}^{p^i}). \]
\end{theorem}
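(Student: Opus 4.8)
The plan is to run Dwork's analytic theory of Frobenius for families, using his dual theory with the $\theta_\infty$-splitting function (cf.\ \cite{D1.3,AS01,AS02,AS3}) so that the estimates are valid for every prime. As in the proof of Proposition~1.7 in Section~7, one attaches to $f_\lambda$ the toric exponential sum whose exponents are the vectors ${\bf a}_j^+$ --- the last coordinate being the Dwork auxiliary variable that cuts out the affine cone $X'_\lambda$ --- and realizes $P_\lambda(t)$, up to the trivial factors, via a $p$-adic cochain complex $\Omega^\bullet(\Lambda)$ carrying a $\sigma$-linear chain map $\alpha_1$, where $\sigma$ lifts $\Lambda\mapsto\Lambda^p$; the reciprocal zeros and poles of $P_\lambda(t)$ are then the eigenvalues of the $a$-fold composite $\alpha_1^{(a)}$ on the cohomology of $\Omega^\bullet(\hat\lambda)$. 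The Euler operators $Z_i$ of (1.12) grade $\Omega^\bullet$ by the cone, and the $n$-form (1.10), with its $(\mu+1)$-st power in the denominator, singles out the graded piece on which $\alpha_1$ carries an extra factor $p^\mu$ (so that $\alpha_1^{(a)}$ carries $q^\mu$) and is otherwise ``unit-root-like''; by Proposition~1.8 the hypothesis $\bar H(\lambda)\neq0$ is exactly the condition that this piece has a rank-one $\alpha_1^{(a)}$-stable summand, and that its eigenvalue is $\rho_{\min}(\lambda)$, a reciprocal zero and not a reciprocal pole.

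The heart of the argument is to identify the Frobenius along this distinguished line with $F$. The Gauss--Manin connection of $\Omega^\bullet$ in the parameters $\Lambda$ is the $A$-hypergeometric system with parameter ${\bf b}$ of (1.13), whose $p$-adically bounded horizontal section is $(\Lambda_1\cdots\Lambda_{\mu+1})^{-1}F(\Lambda)$ of (1.15) --- unique up to scalar because $h^{\mu,n-1-\mu}=1$. Following Dwork's standard manipulation for families (\cite[Section~6]{D1.5}), one computes $\alpha_1$ on this section using the congruences satisfied by the coefficients of $\theta_\infty$ and of its relevant powers --- the same congruences that make the coefficients of $F$ and of $H$ $p$-integral (cf.\ \cite[Proposition~5.2]{AS}). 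After cancelling the monomial normalization --- the factor responsible for the $(\Lambda_1\cdots\Lambda_{\mu+1})^{-(p-1)}$ occurring in the definition of ${\mathcal D}_+$ --- this shows that on ${\mathcal D}$, where $F(\Lambda)$ and $F(\Lambda^p)$ both converge to unit values, $\alpha_1$ acts along the distinguished line as multiplication by $p^\mu F(\Lambda)/F(\Lambda^p)$.

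The analytic continuation is then a convergence estimate. The matrix entry of $\alpha_1$ along the distinguished line is an explicit series in the coefficients of $\theta_\infty$; Dwork's growth estimates for $\theta_\infty$ show that it is analytic on the whole region where $|\Lambda_j|\ge1$ for $j\le\mu+1$ and $|\Lambda_j|\le1$ for $j\ge\mu+2$, \emph{provided} $(\Lambda_1\cdots\Lambda_{\mu+1})^{-(p-1)}H(\Lambda)$ stays a unit --- that is, on ${\mathcal D}_+$. Hence $\mathcal{F}(\Lambda):=p^{-\mu}\cdot(\text{that entry})$, which equals $F(\Lambda)/F(\Lambda^p)$ on ${\mathcal D}\subseteq{\mathcal D}_+$, is the sought analytic extension to all of ${\mathcal D}_+$.

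Finally, let $\hat\lambda$ be the Teichm\"uller lifting of $\lambda\in({\mathbb F}_q^\times)^N$ with $\bar H(\lambda)\neq0$. Each $\hat\lambda_j$ is a root of unity, so $|\hat\lambda_j|=1$; the inequality conditions defining ${\mathcal D}_+$ hold with equality, and $|(\hat\lambda_1\cdots\hat\lambda_{\mu+1})^{-(p-1)}H(\hat\lambda)|=|H(\hat\lambda)|=1$ because its reduction mod $p$ is $\bar H(\lambda)\neq0$; since the conditions defining ${\mathcal D}_+$ are stable under $\Lambda\mapsto\Lambda^p$, also $\hat\lambda^{p^i}\in{\mathcal D}_+$ for $i=0,\dots,a-1$. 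At such a point the distinguished rank-one summand is defined over ${\mathbb Q}_p(\zeta_{q-1})$, $\alpha_1^{(a)}$ is genuinely linear on it, and its single eigenvalue is $\rho_{\min}(\lambda)$ by Propositions~1.7 and~1.8. Iterating $\alpha_1$ and using $\hat\lambda^{p^a}=\hat\lambda$ to move between the successive fibers, that eigenvalue is $\prod_{i=0}^{a-1}p^\mu\mathcal{F}(\hat\lambda^{p^i})=q^\mu\prod_{i=0}^{a-1}\mathcal{F}(\hat\lambda^{p^i})$, which is the assertion. The main obstacle is the middle step: pinning down the congruences for $\theta_\infty$ and its powers and checking that they assemble exactly into the coefficients of $F$ (and of $H$, for the statement mod $p$), so that the chain-level Frobenius along the distinguished line is literally $p^\mu F(\Lambda)/F(\Lambda^p)$; granting that, the homogeneity bookkeeping, the convergence estimates, and the semilinear iteration are routine.
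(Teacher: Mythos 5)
Your outline follows the same broad architecture as the paper --- Dwork's trace formula, a semilinear Frobenius $\alpha^*$ acting on a dual chain space, a distinguished rank-one eigenspace singled out by $\bar H(\lambda)\neq0$, and an iteration over the Teichm\"uller orbit --- but it has a genuine gap precisely at the step you yourself flag as ``the main obstacle.''

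You propose to use the $\theta_\infty$-splitting (the Artin--Hasse-derived $\hat\theta$) \emph{and} to show that the Frobenius acts on the distinguished line as multiplication by (a power of $p$ times) $F(\Lambda)/F(\Lambda^p)$, where $F$ is the hypergeometric series (1.15). Those two choices are incompatible. When $\alpha^*$ is built from $\theta(\Lambda,x)=\hat\theta(\Lambda,x)/\hat\theta(\Lambda^p,x^p)$, its natural eigenvector is not $F(\Lambda,x)$ but the modified generating series $G(\Lambda,x)=\delta_-\big(F(\Lambda,x)\hat\theta_1(\Lambda,x)\big)$ --- the factor $\hat\theta_1$ is exactly what intertwines the $\exp(\gamma_0 t)$ building blocks of $F$ with the $\hat\theta(t)$ building blocks that $\theta(\Lambda,x)$ wants. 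One gets $\alpha^*(G(\Lambda,x))=p^{\mu+1}G(\Lambda,x)$, and it is $G(\Lambda,x)/G(\Lambda)$, not $F(\Lambda,x)/F(\Lambda)$, that is the fixed point of the contraction mapping whose existence gives the analytic continuation to $\mathcal{D}_+$. The paper is explicit that working with $F$ alone would only prove the theorem for almost all primes; the entire point of introducing $G$ is that its coefficients satisfy uniform $p$-adic estimates, enabling the contraction-mapping argument for every prime.

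Consequently, the claim you defer (``the congruences for $\theta_\infty$ ... assemble exactly into the coefficients of $F$'') is false: they assemble into the coefficients of $G$. To recover the statement for $F$ one needs the separate bridge (Theorem 4.22 in the paper): (i) show that $G/F$ and $F/G$ extend analytically to $\mathcal{D}_+$ --- this uses the integrality of $F_u(\Lambda)$ and the factorial divisibility of Lemma 4.13 --- and (ii) observe that the correction factor $\mathcal{H}(\Lambda)=G(\Lambda)/F(\Lambda)$ telescopes over a Teichm\"uller orbit, so that $\prod_{i=0}^{a-1}\mathcal{F}(\hat\lambda^{p^i})=\prod_{i=0}^{a-1}\mathcal{G}(\hat\lambda^{p^i})$. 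Without this step the equality $\rho_{\min}(\lambda)=q^\mu\prod\mathcal{F}(\hat\lambda^{p^i})$ does not follow from the Frobenius eigenvalue computation, because that computation yields the $\mathcal{G}$-product, not the $\mathcal{F}$-product. (A smaller bookkeeping point: the Frobenius eigenvalue on $L_0^\Gamma$ carries the factor $p^{\mu+1}$, not $p^\mu$; the normalization down to $q^\mu$ in Theorem 1.16 comes from the relation $P_\lambda(qt)\mapsto P_\lambda(t)$ in Eq.\ (7.12), which your proposal does not track.)
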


{\it Example $1$.}  When $d=n+1$ and $\mu=0$, Theorem 1.16 gives a unit root formula assuming only that $x_0\cdots x_n$ is one of the monomials appearing in $f_\lambda$.  If $f_\lambda$ defines a smooth hypersurface, then $P_\lambda(t)$ is a polynomial and this is its unique unit root.   Consider for instance the Dwork family of hypersurfaces:
\[ f_\lambda(x_0,\dots,x_n) = \lambda_1 x_0\cdots x_n + \lambda_2x_0^{n+1} + \lambda_3x_1^{n+1} + \cdots + \lambda_{n+2} x_n^{n+1}. \]
One computes that $L' = \{(-(n+1)l,l,\dots,l)\in {\mathbb Z}^{n+2}\mid l\in{\mathbb N}\}$ and
\[ F(\Lambda) = \sum_{l=0}^\infty \frac{(-1)^{(n+1)l}((n+1)l)!}{(l!)^{n+1}}\bigg(\frac{\Lambda_2\cdots\Lambda_{n+2}}{\Lambda_1^{n+1}}\bigg)^l. \]
By Theorem 1.16, the ratio ${\mathcal F}(\Lambda) = F(\Lambda)/F(\Lambda^p)$ extends to ${\mathcal D}_+$ and the product $\prod_{i=0}^{a-1} {\mathcal F}(\hat{\lambda}^{p^i})$
gives the unit reciprocal zero of $P_\lambda(t)$ when $\bar{H}(\lambda)\neq 0$.  

The more usual way of normalizing the Dwork family is
\[ x_0^{n+1} + \cdots + x_n^{n+1} - (n+1)\Lambda^{-1/(n+1)}x_0\cdots x_n, \]
which we can recover from the specialization $\Lambda_1\mapsto -(n+1)\Lambda^{-1/(n+1)}$ and $\Lambda_j\mapsto 1$ for $j=2,\dots,n+2$, giving
\begin{align*} 
F(-(n+1)\Lambda^{-1/(n+1)},1,\dots,1) &= \sum_{l=0}^{\infty} \frac{\big((n+1)l\big)!}{(l!)^{n+1}(n+1)^{(n+1)l}}\Lambda^l \\
 &= {}_nF_{n-1}(1/(n+1),\dots,n/(n+1);1,\dots,1;\Lambda)
\end{align*}
The assertion of Theorem 1.16 for this normalization of the Dwork family was recently proved by J.-D. Yu\cite{Y}.

{\it Example $2$.}  Let
\[ f_\lambda(x_0,\dots,x_5) = \lambda_1x_0x_1x_2 + \lambda_2x_3x_4x_5+\sum_{i=0}^5 \lambda_{i+3}x_i^3. \]
One computes that
\[ L' = \{ l_1(-3,0,1,1,1,0,0,0) + l_2(0,-3,0,0,0,1,1,1)\mid l_1,l_2\in{\mathbb N} \}, \]
hence
\[ F(\Lambda) = \sum_{l_1,l_2=0}^\infty \frac{(-1)^{l_1+l_2}(3l_1)!(3l_2)!}{(l_1!)^3(l_2!)^3} \frac{(\Lambda_3\Lambda_4\Lambda_5)^{l_1}(\Lambda_6\Lambda_7\Lambda_8)^{l_2}}{\Lambda_1^{3l_1}\Lambda_2^{3l_2}}. \]
By Theorem 1.16, the ratio ${\mathcal F}(\Lambda) = F(\Lambda)/F(\Lambda^p)$ extends to ${\mathcal D}_+$ and $q\prod_{i=0}^{a-1} {\mathcal F}(\hat{\lambda}^{p^i})$ equals the reciprocal zero $\rho_{\min}(\lambda)$ of $P_\lambda(t)$ with ${\rm ord}_q\:\rho_{\min}(\lambda) = 1$ when $\bar{H}(\lambda)\neq 0$.

{\it Remark.}  Even when there is no choice of $\mu+1$ elements of the set $\{{\bf a}_j\}_{j=1}^N$ satisfying (1.9), results similar to Theorem 1.16 may be true.  For example, suppose that $p\equiv 1\pmod{d}$ and that
\[ {\bf a}_j = (0,\dots,0,d,0,\dots,0)\quad\text{for $j=1,\dots,n+1$}, \]
where the `$d$' occurs in the $(j-1)$-st coordinate (i.~e., the polynomial $f_\lambda$ is a deformation of the diagonal hypersurface).  Equation (1.14) will remain valid if we choose 
\[ v=(-1/d,\dots,-1/d,0,\dots,0), \]
where the `$-1/d$' is repeated $n+1$ times.  Since this vector $v$ has minimal negative support, there is a corresponding series solution of the $A$-hypergeometric system with parameter ${\bf b}$ given by \cite[Proposition~3.4.13]{SST}.  And by \cite[Corollary~3.6]{AS} this series will have $p$-integral coefficients for $p\equiv 1\pmod{d}$.  Arguments similar to those of this article will  then show that an analogue of Theorem~1.16 is true for this series solution when $p\equiv 1\pmod{d}$.   

This paper is organized as follows.  In Section 2 we collect some notation that will be used throughout the paper.  In Section 3 we recall some estimates from Dwork\cite{D1.25} that will play a key role in what follows.  In Section 4 we show that Theorem 1.16 is equivalent to the same statement with $F(\Lambda)$ replaced by a related series $G(\Lambda)$.  The series $G(\Lambda)$ depends on the prime $p$ but satisfies better $p$-adic estimates than~$F(\Lambda)$.  (Without introducing $G(\Lambda)$, we would only be able to prove Theorem~1.16 for almost all primes.)  We use these estimates in Sections 5 and 6 to prove the analytic continuation of~$G(\Lambda)/G(\Lambda^p)$ and some related series.  Finally, in Section 7, we prove Proposition~1.7 and derive the formula for $\rho_{\min}(\lambda)$ in terms of special values of $G(\Lambda)/G(\Lambda^p)$ at Teichm\"uller points.

In a future work, we hope to treat as well the case in which the first nonvanishing Hodge number $h:=h^{\mu,n-1-\mu}$ is $>1$.  In this case, the (higher) Hasse-Witt matrix is $h\times h$ and, as in the case $h=1$, its entries may be described in terms of power series solutions of appropriate $A$-hypergeometric systems. 

\section{Notation}

For the convenience of the reader we collect in this section some notation that will be used throughout the paper.

Let ${\mathbb N}A\subseteq{\mathbb Z}^{n+2}$ be the semigroup generated by $A$ and let $M\subseteq{\mathbb Z}^{n+2}$ be the abelian group generated by $A$.  Note that $M$ lies in the hyperplane $\sum_{i=0}^n u_i = du_{n+1}$ in ${\mathbb R}^{n+2}$.  Set $M_- = M\cap({\mathbb Z}_{<0})^{n+2}$, $M_+ = M\cap{\mathbb N}^{n+2}$.    We denote by $\delta_-$ the truncation operator on formal Laurent series in variables $x_0,\dots,x_{n+1}$ that preserves only those terms having all exponents negative:
\[ \delta_-\bigg(\sum_{k\in{\mathbb Z}^{n+2}}c_kx^k\bigg) = \sum_{k\in({\mathbb Z}_{<0})^{n+2}} c_kx^k. \]
We shall use the same notation for formal Laurent series in a single variable $t$:
\[ \delta_-\bigg(\sum_{k=-\infty}^\infty c_kt^k\bigg) = \sum_{k=-\infty}^{-1} c_kt^k. \]

Let $E\subseteq{\mathbb Z}^N$ be the set
\[ E=\{(l_1,\dots,l_N)\mid\text{$l_j\leq 0$ for $1\leq j\leq\mu+1$ and $l_j\geq 0$ for $\mu+2\leq j\leq N$}\}. \]
Note that, in the notation of the Introduction, $L' = L\cap E$.  
We shall need to consider series in the $\Lambda_j$ that, like $F(\Lambda)$ (see (1.15)), have exponents lying in $E$.

Let ${\mathbb C}_p$ be the completion of an algebraic closure of ${\mathbb Q}_p$. For each $u\in M$, put
\[ R_u = \bigg\{ \xi(\Lambda) = \sum_{\substack{\nu\in E\\ \sum_{j=1}^N \nu_j {\bf a}^+_j = u}} c_\nu\prod_{j=1}^N \Lambda_j^{\nu_j}\mid \text{$c_\nu\in{\mathbb C}_p$ and $\{\lvert c_\nu\rvert\}_\nu$ is bounded}\bigg\} \]
We define the {\it degree\/} of a monomial $\Lambda^\nu$ to be $\sum_{j=1}^N \nu_j{\bf a}^+_j\in M$.  The series in~$R_u$ are analytic functions convergent and bounded on ${\mathcal D}$ and are homogeneous of degree $u$.

Let $R_u'$ be the set of functions on ${\mathcal D}_+$ that are uniform limits of sequences of 
rational functions $h(\Lambda)$ in the $\Lambda_j$, $1\leq j\leq N$, that are defined on ${\mathcal D}_+$ and homogeneous of degree~$u$, i.e., that satisfy
\[ h(\dots,t_0^{a_{0j}}\cdots t_n^{a_{nj}}t_{n+1}\Lambda_j,\dots) = t_0^{u_1}\cdots t_n^{u_n}t_{n+1}^{u_{n+1}} h(\Lambda). \]
Equivalently, this says that the restriction of $h(\Lambda)$ to ${\mathcal D}$ lies in $R_u$.  In particular, $R_u'$ is a subset of $R_u$.  The set $R_0$ is a ring, $R_u$ is a module over $R_0$, $R_0'$ is a subring 
of~$R_0$ and $R'_u$ is a module over $R'_0$.  We define a norm on $R_u$ by setting
\[ \lvert \xi\rvert = \sup_{\Lambda\in{\mathcal D}} \lvert\xi(\Lambda)\rvert. \]
Note that for $\xi\in R'_u$ one has $\sup_{\Lambda\in{\mathcal D}}\lvert\xi(\Lambda)\rvert = 
\sup_{\Lambda\in{\mathcal D}_+} \lvert\xi(\Lambda)\rvert$.  Both $R_u$ and $R_u'$ are complete in this norm.  Note that an element $\xi(\Lambda) = \sum_{\nu\in L'} c_\nu\Lambda^\nu\in R_0$ is invertible if and only if $\lvert\xi(\Lambda)\rvert = \lvert c_0\rvert$.  

From the discussion in the Introduction we see that $F(\Lambda)/F(\Lambda^p)\in R_0$.  To prove the first assertion of Theorem 1.16 we need to show that $F(\Lambda)/F(\Lambda^p)\in R'_0$.  In Section~4, we show that this is equivalent to the same assertion for a related function $G(\Lambda)$, for which the desired assertion is proved in Corollary~5.17.

Let $\gamma_0$ be a zero of the series $\sum_{i=0}^\infty t^{p^i}/p^i$ having ${\rm ord}_p\:\gamma_0 = 1/(p-1)$, where ${\rm ord}_p$ is the $p$-adic valuation normalized by ${\rm ord}_p\:p = 1$ (the role of $\gamma_0$ will be discussed more fully in the next section).  Define $S$ to be the ${\mathbb C}_p$-vector space of formal series
\[ S = \bigg\{\xi(\Lambda,x) = \sum_{u\in M_-} \xi_{u}(\Lambda) \gamma_0^{u_{n+1}}x^{u} \mid 
\text{$\xi_{u}(\Lambda)\in R_{u}$ and $\{\lvert\xi_{u}\rvert\}_u$ is bounded}\bigg\}. \]
Let $S'$ be defined analogously with the condition ``$\xi_{u}(\Lambda)\in R_{u}$'' being replaced by 
``$\xi_{u}(\Lambda)\in R_{u}'$''.  Define a norm on $S$ by setting
\[ \lvert\xi(\Lambda,x)\rvert = \sup_u\{\lvert\xi_{u}\rvert\}. \]
Both $S$ and $S'$ are complete under this norm.

\section{$p$-adic estimates}

We begin by recording some basic $p$-adic estimates from \cite[Section 4]{D1.25} that will play a role in what follows.  Let $ {\rm AH}(t)= \exp(\sum_{i=0}^{\infty}t^{p^i}/p^i)$ be the Artin-Hasse series, a power series in $t$ that has $p$-integral coefficients, and set 
\[ \theta(t) = {\rm AH}({\gamma}_0t)=\sum_{i=0}^{\infty}\theta_i t^i. \]
We then have
\begin{equation}
{\rm ord}_p\: \theta_i\geq \frac{i}{p-1}.
\end{equation}

We define $\hat{\theta}(t) = \prod_{j=0}^\infty \theta(t^{p^j})$, which gives $\theta(t) = \hat{\theta}(t)/\hat{\theta}(t^p)$.  If we set 
\begin{equation}
\gamma_j = \sum_{i=0}^j \frac{\gamma_0^{p^i}}{p^i},
\end{equation}
then
\begin{equation}
\hat{\theta}(t) = \exp\bigg(\sum_{j=0}^{\infty} \gamma_j t^{p^j}\bigg) = \prod_{j=0}^\infty \exp(\gamma_j t^{p^j}).
\end{equation}
Since $\big(p^i/(p-1)\big) -i$ is an increasing function of $i$ for $i\geq 1$, we have from the definition of $\gamma_0$ that
\begin{equation}
{\rm ord}_p\: \gamma_j = \frac{p^{j+1}}{p-1} - (j+1).
\end{equation}

We estimate each of the series $\exp(\gamma_jt^{p^j}) = \sum_{k=0}^\infty (\gamma_j t^{p^j})^k/k!$.
We have (where~$s_k$ denotes the sum of the digits in the $p$-adic expansion of $k$)
\begin{align}
{\rm ord}_p\: \frac{\gamma_j^k}{k!}  &=   k\bigg( \frac{p^{j+1}}{p-1} -(j+1)\bigg) - \frac{k-s_k}{p-1} \\ \nonumber
 &= k(p^j+p^{j-1}+\cdots+p-j) + \frac{s_k}{p-1}.
\end{align}
It follows that if we write $\exp(\gamma_j t^{p^j}) = \sum_{i=0}^\infty a^{(j)}_it^i$, then $a^{(j)}_i=0$ if $p^j\nmid i$, while if $i=p^jk$ then we have
\begin{align}
{\rm ord}_p\: a^{(j)}_i &= \frac{i}{p^j}(p^j+p^{j-1} + \cdots + p-j) + \frac{s_i}{p-1} \\ \nonumber
 &= i\bigg(1+\frac{1}{p} + \cdots +\frac{1}{p^{j-1}} -\frac{j}{p^j}\bigg) +\frac{s_i}{p-1}
\end{align}
(using $s_i=s_k$).  Equation (3.6) implies that ${\rm ord}_p\:a_i^{(j_1)}\geq {\rm ord}_p\:a_i^{(j_2)}$ if $j_1\geq j_2$.  
It follows that for all $j\geq 1$, 
\begin{equation}
{\rm ord}_p\:a_i^{(j)}\geq{\rm ord}_p\:a_i^{(1)} \geq \frac{i(p-1)}{p} + \frac{s_i}{p-1}\geq \frac{s_i}{p-1} = {\rm ord}_p\:a^{(0)}_i.
\end{equation}
If we write $\hat{\theta}(t) = \sum_{i=0}^\infty \hat{\theta}_i(\gamma_0 t)^i/i!$, then (3.3) and (3.7) imply
\begin{equation}
{\rm ord}_p\:\hat{\theta}_i\geq 0.
\end{equation}

We shall also need the series
\begin{equation}
\hat{\theta}_1(t) = \prod_{j=1}^\infty \exp(\gamma_jt^{p^j}) = :\sum_{i=0}^\infty \frac{\hat{\theta}_{1,i}}{i!}(\gamma_0 t)^i. 
\end{equation}
Note that $\hat{\theta}(t) = \exp(\gamma_0t)\hat{\theta}_1(t)$.  
Using the relation $s_{i_1}+s_{i_2}\geq s_{i_1+i_2}$, Equation (3.7) implies that 
\begin{equation}
{\rm ord}_p\:\hat{\theta}_{1,i}\geq \frac{i(p-1)}{p}. 
\end{equation}

Define a series $\hat{\theta}_1(\Lambda,x)$ by the formula
\begin{equation}
\hat{\theta}_1(\Lambda,x) = \prod_{j=1}^N \hat{\theta}_1(\Lambda_jx^{{\bf a}^+_j}).
\end{equation}
Expanding the product~(3.11) according to powers of $x$ we get
\begin{equation}
\hat{\theta}_1(\Lambda,x) = \sum_{u=(u_0,\dots,u_{n+1})\in{\mathbb N}A} \hat{\theta}_{1,u}(\Lambda)\gamma_0^{u_{n+1}} x^u,
\end{equation}
where
\begin{equation}
\hat{\theta}_{1,u}(\Lambda) = \sum_{\substack{k_1,\dots,k_N\in{\mathbb N}\\ \sum_{j=1}^N k_j{\bf a}^+_j = u}} \bigg(\prod_{j=1}^N \frac{\hat{\theta}_{1,k_j}}{k_j!}\bigg)\Lambda_1^{k_1}\cdots\Lambda_N^{k_N}.
\end{equation}

We have similar results for the reciprocal power series
\[ \hat{\theta}_1(t)^{-1} = \prod_{j=1}^\infty \exp(-\gamma_jt^{p^j}). \]
If we write
\begin{equation}
\hat{\theta}_1(t)^{-1} = \sum_{i=0}^\infty \frac{\hat{\theta}'_{1,i}}{i!}(\gamma_0 t)^i,
\end{equation}
then the coefficients satisfy
\begin{equation}
{\rm ord}_p\: \hat{\theta}'_{1,i}\geq \frac{i(p-1)}{p}.
\end{equation}
We also have
\begin{equation}
\hat{\theta}_1(\Lambda,x)^{-1} = \prod_{j=1}^N \hat{\theta}_1(\Lambda_jx^{{\bf a}^+_j})^{-1},
\end{equation}
which we again expand in powers of $x$ as
\begin{equation}
\hat{\theta}_1(\Lambda,x)^{-1} = \sum_{u=(u_0,\dots,u_{n+1})\in{\mathbb N}A} \hat{\theta}'_{1,u}(\Lambda)\gamma_0^{u_{n+1}} x^u
\end{equation}
with
\begin{equation}
\hat{\theta}'_{1,u}(\Lambda) = \sum_{\substack{k_1,\dots,k_N\in{\mathbb N}\\ \sum_{j=1}^N k_j{\bf a}_j^+ = u}} \bigg(\prod_{j=1}^N \frac{\hat{\theta}'_{1,k_j}}{k_j!}\bigg) \Lambda_1^{k_1}\cdots\Lambda_N^{k_N}.
\end{equation}

We also define
\begin{equation}
\theta(\Lambda,x) = \prod_{j=1}^N \theta(\Lambda_jx^{{\bf a}^+_j}).
\end{equation}
Expanding the right-hand side in powers of $x$, we have 
\begin{equation}
\theta(\Lambda,x) = \sum_{u\in{\mathbb N}A} \theta_u(\Lambda)x^u,
\end{equation}
where
\begin{equation}
\theta_u(\Lambda) = \sum_{\nu\in{\mathbb N}^N} \theta^{(u)}_\nu\Lambda^\nu
\end{equation}
and
\begin{equation}
\theta^{(u)}_\nu = \begin{cases} \prod_{j=1}^N \theta_{\nu_j} & \text{if $\sum_{j=1}^N \nu_j{\bf a}^+_j = u$,} \\
0 & \text{if $\sum_{j=1}^N \nu_j{\bf a}^+_j \neq u$,} \end{cases}
\end{equation}
so $\theta_u(\Lambda)$ is homogeneous of degree $u$.
The equation $\sum_{j=1}^N \nu_j{\bf a}^+_j = u$ has only finitely many solutions $\nu\in{\mathbb N}^N$, so $\theta_u(\Lambda)$ is a polynomial in the $\Lambda_j$.  Equations (3.1) and (3.22) show that
\begin{equation}
{\rm ord}_p\:\theta^{(u)}_\nu \geq\frac{\sum_{j=1}^N \nu_j}{p-1} = \frac{u_{n+1}}{p-1}.
\end{equation}

We observe one congruence that will allow us to simplify some later formulas.  From (3.2) and (3.4) with $j=1$ we have
\[ \gamma_0 + \frac{\gamma_0^p}{p}\equiv 0\pmod{\gamma_0p^{p-1}}. \]
Multiplying this congruence by $p/\gamma_0$ gives $\gamma_0^{p-1} \equiv -p \pmod{p^p}$, so, {\it a fortiori},
\begin{equation}
\gamma_0^{p-1}\equiv -p \pmod{p^2}\quad\text{for all primes $p$}.
\end{equation}

\section{Generating series for $A$-hypergeometric functions}

In Dwork's theory hypergeometric functions often appear in contiguous families, as coefficients of a generating series.  We describe the relevant generating series that will appear in our situation.  

Consider the formal series $\zeta(t)$ defined by
\begin{equation}
\zeta(t) = \sum_{l=0}^\infty (-1)^ll!t^{-l-1}. 
\end{equation}
We note that the series $\zeta(t)$ shares a property with the exponential series $\exp t$: differentiating a term of the series with respect to $t$ equals the term of the series involving the next lower power of $t$.

We define the formal generating series $F(\Lambda,x)$ by the formula
\begin{equation}
F(\Lambda,x) = \delta_-\bigg(\prod_{j=1}^{\mu+1} \zeta(\gamma_0\Lambda_jx^{{\bf a}_j^+}) \prod_{j=\mu+2}^N \exp(\gamma_0\Lambda_jx^{{\bf a}_j^+})\bigg),
\end{equation}
where $\delta_-$ is as defined in Section 2.  A straightforward calculation shows that
\begin{equation}
F(\Lambda,x) = \sum_{u\in M_-} F_u(\Lambda)\gamma_0^{u_{n+1}}x^u,
\end{equation}
where
\begin{equation}
F_u(\Lambda) = (\Lambda_1\cdots\Lambda_{\mu+1})^{-1} 
\sum_{\substack{l\in E\\ {\bf b}+\sum_{j=1}^N l_j{\bf a}^+_j = u}} (-1)^{\sum_{j=1}^{\mu+1}l_j}\frac{\prod_{j=1}^{\mu+1} (-l_j)!}{\prod_{j=\mu+2}^N l_j!} \prod_{j=1}^N \Lambda_j^{l_j}.
\end{equation}

It follows from the definition of $\zeta(t)$ that for $j=1,\dots,\mu+1$
\[ \frac{\partial}{\partial\Lambda_j}\zeta(\gamma_0\Lambda_jx^{{\bf a}_j^+}) = \gamma_0x^{{\bf a}_j^+} \zeta(\gamma_0\Lambda_jx^{{\bf a}_j^+}) - \frac{1}{\Lambda_j}. \]
A straightforward calculation then gives
\begin{equation}
\frac{\partial}{\partial\Lambda_j}F(\Lambda,x) = \delta_-\big(\gamma_0x^{{\bf a}_j^+}F(\Lambda,x)\big)
\end{equation}
for $j=1,\dots,\mu+1$.  Equivalently, for $u\in M_-$ we have by (4.3)
\begin{equation} \frac{\partial}{\partial\Lambda_j}F_u(\Lambda) = F_{u-{\bf a}_j^+}(\Lambda). 
\end{equation}
More generally, if $l_1,\dots,l_N$ are nonnegative integers, then
\begin{equation}
\prod_{j=1}^N \bigg(\frac{\partial}{\partial\Lambda_j}\bigg)^{l_j} F_u(\Lambda) = F_{u-\sum_{j=1}^N l_j{\bf a}_j^+}(\Lambda).
\end{equation}
In particular we have from the definition of the box operators
\begin{equation}
\Box_l\big(F_u(\Lambda)\big) = 0\quad\text{for all $l\in L$ and all $u\in M_-$.} 
\end{equation}
It is immediate from (4.4) that $F_u(\Lambda)$ satisfies the Euler operators (1.12) with $\beta=u$, hence by (4.8) the series $F_u(\Lambda)$ satisfies the $A$-hypergeometric system with parameter $\beta = u$.

Comparing (4.4) with (1.15), one sees that
\begin{equation}
 F_{\bf b}(\Lambda) = (\Lambda_1\cdots\Lambda_{\mu+1})^{-1}F(\Lambda), 
\end{equation}
a series which we noted in the Introduction has integer coefficients.  

\begin{lemma}
For all $u\in M_-$, the series $F_u(\Lambda)$ given by $(4.4)$ has integer coefficients.
\end{lemma}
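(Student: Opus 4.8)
The plan is to argue directly from the closed formula (4.4) rather than from the differentiation relations (4.7): starting from the integrality of $F_{\bf b}$, the relations (4.7) only produce $F_u$ for $u\in{\bf b}-{\mathbb N}A$, which is in general a proper subset of $M_-$, so they do not suffice. By (4.4), a monomial $\prod_{j=1}^N\Lambda_j^{l_j}$ occurring in $(\Lambda_1\cdots\Lambda_{\mu+1})F_u(\Lambda)$ has coefficient $\pm\prod_{j=1}^{\mu+1}(-l_j)!\big/\prod_{j=\mu+2}^N l_j!$, where $l\in E$ and ${\bf b}+\sum_j l_j{\bf a}_j^+=u$. Thus (if the sum defining $F_u$ is empty there is nothing to prove) the lemma reduces to the arithmetic claim that $\prod_{j=\mu+2}^N l_j!$ divides $\prod_{j=1}^{\mu+1}(-l_j)!$ for every such $l$. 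For $u={\bf b}$ this is the integrality of $F(\Lambda)$ recorded after (1.15) and in (4.9); the work is to promote it to all of $M_-$, and for that the point is to exploit that $u\in M_-$, not merely that $l\in E$.

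First I would unpack hypothesis (1.9): after relabeling coordinates it says that the supports $B_1,\dots,B_{\mu+1}$ of ${\bf a}_1,\dots,{\bf a}_{\mu+1}$ partition $\{0,\dots,n\}$ into $d$-element blocks, with ${\bf a}_k$ the indicator function of $B_k$. Hence for $i\in B_k$ the $i$-th coordinate of ${\bf b}+\sum_j l_j{\bf a}_j^+$ equals $-1+l_k+\sum_{j=\mu+2}^N l_ja_{ij}$, so the condition $u\in M_-$ is precisely the family of coordinatewise inequalities
\[ \sum_{j=\mu+2}^N l_ja_{ij}\ \leq\ -l_k\qquad(1\leq k\leq\mu+1,\ i\in B_k). \]
These inequalities are the only input I would use; note that simply summing them over all $i$ loses a factor $d$ and is too weak once $\mu\geq1$, so they must be kept coordinate by coordinate until after a rounding step.

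The divisibility is then verified prime by prime. Fix a prime $p$ and an integer $s\geq1$, put $P=p^s$, and set $q_j=\lfloor l_j/P\rfloor\geq0$. From $Pq_j\leq l_j$ and the displayed inequality, $P\sum_{j}q_ja_{ij}\leq -l_k$ for each $i\in B_k$, and since the left side is a nonnegative multiple of $P$ this sharpens to $\sum_{j}q_ja_{ij}\leq\lfloor(-l_k)/P\rfloor$. Summing over the $d$ coordinates $i\in B_k$, then over $k$, and using $\sum_k\sum_{i\in B_k}a_{ij}=\sum_i a_{ij}=d$ on the left, one factor $d$ cancels and we obtain $\sum_{j=\mu+2}^N\lfloor l_j/p^s\rfloor\leq\sum_{k=1}^{\mu+1}\lfloor(-l_k)/p^s\rfloor$. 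Summing over $s\geq1$ and invoking Legendre's formula ${\rm ord}_p(m!)=\sum_{s\geq1}\lfloor m/p^s\rfloor$ gives ${\rm ord}_p\big(\prod_{j=\mu+2}^N l_j!\big)\leq{\rm ord}_p\big(\prod_{k=1}^{\mu+1}(-l_k)!\big)$, and since $p$ was arbitrary the required divisibility follows.

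The one genuinely non-routine step is the middle one: recognizing that $u\in M_-$ should be used as a list of coordinatewise bounds and that the floor function must be applied before summing, first over a block and then over the blocks, so that the spurious factor $d$ disappears. The passage from (4.4) to the factorial ratio and the final application of Legendre's formula are mechanical.
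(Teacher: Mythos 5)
Your proof is correct, and it takes a genuinely different route from the paper's. The paper's Lemma~4.10 is proved indirectly: it enlarges the monomial set to $\{x^{{\bf a}_j}\}_{j=1}^{\tilde N}$ (all degree-$d$ monomials), observes that in this enlarged setting every $u\in\tilde M_-$ is of the form ${\bf b}-\sum_j l_j{\bf a}_j^+$ with $l_j\in{\mathbb N}$, uses the contiguity relations~(4.7)/(4.11) to reduce to the base series $\tilde F_{\bf b}$, and then invokes the external integrality result \cite[Proposition~5.2]{AS}; the original $F_u$ is recovered by specializing the extra $\Lambda_j$ to $0$. Your argument is instead a direct, self-contained verification: you read off from~(4.4) that integrality is exactly the factorial divisibility $\prod_{j=\mu+2}^N l_j!\mid\prod_{k=1}^{\mu+1}(-l_k)!$ for $l\in E$ with ${\bf b}+\sum_j l_j{\bf a}_j^+\in M_-$, you unpack~(1.9) as a partition of $\{0,\dots,n\}$ into $d$-blocks (which is exactly what~(1.9) forces on $\{{\bf a}_j\}_{j\le\mu+1}$, since nonnegative integer vectors summing to $(1,\dots,1)$ must be disjointly supported indicator vectors), you translate $u\in M_-$ into the coordinatewise inequalities $\sum_{j>\mu+1}l_ja_{ij}\le -l_k$ for $i\in B_k$, and you prove the divisibility prime-by-prime via Legendre's formula. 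The decisive step — rounding down \emph{before} summing over a block and only then over blocks, so the factor $d$ cancels exactly — is sound, and you correctly note that the crude summed inequality $\sum_{j>\mu+1}l_j\le\sum_{k\le\mu+1}(-l_k)$ is \emph{not} sufficient for the divisibility. What the paper's approach buys is brevity and consistency with the contiguity framework~(4.7) that drives the rest of Section~4; what yours buys is elementarity and transparency — it avoids both the enlargement device and the external citation, and in particular it gives an independent direct proof that $F_{\bf b}(\Lambda)=(\Lambda_1\cdots\Lambda_{\mu+1})F(\Lambda)$ has integer coefficients, i.e.\ it recovers the relevant instance of \cite[Proposition~5.2]{AS} from scratch. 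One small remark: your observation that~(4.7) starting from ${\bf b}$ alone only reaches ${\bf b}-{\mathbb N}A\subsetneq M_-$ is exactly the gap the paper's enlargement is designed to bridge, so both proofs are responding to the same obstruction, just in opposite ways.
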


\begin{proof}
Enlarge the set $\{x^{{\bf a}_j}\}_{j=1}^N$ by adding additional monomials $\{x^{{\bf a}_j}\}_{j=N+1}^{\tilde{N}}$ so that $\{x^{{\bf a}_j}\}_{j=1}^{\tilde{N}}$ consists of all monomials of degree~$d$ in $x_0,\dots,x_n$.  As in~(4.2) and~(4.3) we define
\[ \tilde{F}(\Lambda,x) = \delta_-\bigg(\prod_{j=1}^{\mu+1} \zeta(\gamma_0\Lambda_jx^{{\bf a}_j^+}) \prod_{j=\mu+2}^{\tilde{N}} \exp(\gamma_0\Lambda_jx^{{\bf a}_j^+})\bigg) \]
and set
\[ \tilde{F}(\Lambda,x) = \sum_{u\in \tilde{M}_-} \tilde{F}_u(\Lambda)\gamma_0^{u_{n+1}}x^u, \]
where $\tilde{M}\subseteq{\mathbb Z}^{n+2}$ denotes the abelian group generated by the set $\{ ({\bf a}_j,1)\}_{j=1}^{\tilde{N}}$ and $\tilde{M}_- = \tilde{M}\cap({\mathbb Z}_{<0})^{n+2}$.  The argument that proved (4.7) shows that if $l_1,\dots,l_{\tilde{N}}$ are nonnegative integers, then
\[ \prod_{j=1}^{\tilde{N}} \bigg(\frac{\partial}{\partial\Lambda_j}\bigg)^{l_j} \tilde{F}_u(\Lambda) = \tilde{F}_{u-\sum_{j=1}^N l_j{\bf a}_j^+}(\Lambda). \]
Note that for $u\in M_-$, the series $F_u(\Lambda)$ is obtained from the series $\tilde{F}_u(\Lambda)$ by setting $\Lambda_j=0$ for $j=N+1,\dots,\tilde{N}$.  To prove the lemma, it thus suffices to prove that $\tilde{F}_u(\Lambda)$ has integer coefficients for all $u\in\tilde{M}_-$.  

Every monomial in $x_0,\dots,x_n$ of degree divisible by $d$ is a product of monomials of degree $d$.  In particular, if $x^v$ is such a monomial which is divisible by $x_0\cdots x_n$, then one can write
\[ x^v = x^{{\bf a}_1}\cdots x^{{\bf a}_{\mu+1}} \prod_{j=1}^{\tilde{N}} x^{l_j{\bf a}_j} \]
for some nonnegative integers $l_1,\dots,l_{\tilde{N}}$.  It follows from this that every $u\in \tilde{M}_-$ can be written in the form
\[ u = {\bf b}-\sum_{j=1}^{\tilde{N}} l_j{\bf a}_j^+ \]
for some nonnegative integers $l_1,\dots,l_{\tilde{N}}$.  We thus have
\begin{equation}
\prod_{j=1}^{\tilde{N}}\bigg(\frac{\partial}{\partial\Lambda_j}\bigg)^{l_j} \tilde{F}_{\bf b}(\Lambda) = \tilde{F}_u(\Lambda).
\end{equation}
The series $\tilde{F}_{\bf b}(\Lambda)$ has integer coefficients by \cite[Proposition~5.2]{AS}.  It now follows from (4.11) that $\tilde{F}_u(\Lambda)$ also has integer coefficients.
\end{proof}

We can improve the conclusion of Lemma 4.10.  Fix $u\in M_-$.  There are finitely many $N$-tuples $(k_1,\dots,k_N)\in{\mathbb N}^N$ such that
\begin{equation}
u + \sum_{j=1}^N k_j{\bf a}_j^+\in M_-.
\end{equation}
Let $K_u$ be the least common multiple of the integers $\prod_{j=1}^N k_j!$ over all $(k_1,\dots,k_N)\in{\mathbb N}^N$ satisfying (4.12).
\begin{lemma}
For $u\in M_-$, all coefficients of the series $F_u(\Lambda)$ are divisible by $K_u$. 
\end{lemma}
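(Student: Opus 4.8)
The plan is to bootstrap from Lemma 4.10 using the contiguity relation $(4.7)$. First I would reduce to a divisibility statement for a single factorial product: it suffices to show that for each $N$-tuple $(k_1,\dots,k_N)\in{\mathbb N}^N$ satisfying $(4.12)$, the positive integer $\prod_{j=1}^N k_j!$ divides every coefficient of $F_u(\Lambda)$. Indeed, $K_u$ is by definition the least common multiple of these integers, and a fixed integer divisible by each member of a finite set of positive integers is divisible by their least common multiple. (If the sum defining $F_u(\Lambda)$ is empty the lemma is vacuous, so assume otherwise.)

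So fix such a tuple and set $u' = u + \sum_{j=1}^N k_j{\bf a}_j^+$. Condition $(4.12)$ says precisely that $u'\in M_-$, so Lemma 4.10 applies and $F_{u'}(\Lambda)$ has integer coefficients. Applying $(4.7)$ to $F_{u'}(\Lambda)$ with $l_j = k_j$ gives
\[ \prod_{j=1}^N\bigg(\frac{\partial}{\partial\Lambda_j}\bigg)^{k_j} F_{u'}(\Lambda) = F_{u' - \sum_{j=1}^N k_j{\bf a}_j^+}(\Lambda) = F_u(\Lambda). \]
It then remains to observe that applying the operator $\prod_{j=1}^N(\partial/\partial\Lambda_j)^{k_j}$ to a Laurent monomial $c\prod_{j=1}^N\Lambda_j^{m_j}$ with $c\in{\mathbb Z}$ produces $c\big(\prod_{j=1}^N m_j(m_j-1)\cdots(m_j-k_j+1)\big)\prod_{j=1}^N\Lambda_j^{m_j-k_j}$, whose coefficient is divisible by $\prod_{j=1}^N k_j!$ since each falling factorial $m_j(m_j-1)\cdots(m_j-k_j+1)$ equals $k_j!\binom{m_j}{k_j}$ and is therefore divisible by $k_j!$ for every integer $m_j$. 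Summing over the monomials of $F_{u'}(\Lambda)$ shows that $\prod_{j=1}^N k_j!$ divides every coefficient of $F_u(\Lambda)$, which completes the proof.

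The only point needing a little care is that, for $j=1,\dots,\mu+1$, the variable $\Lambda_j$ occurs in $F_{u'}(\Lambda)$ only to strictly negative powers $m_j\le -1$ (cf. $(4.4)$); but the generalized binomial coefficient $\binom{m_j}{k_j}$ is still an integer — concretely $m_j(m_j-1)\cdots(m_j-k_j+1) = (-1)^{k_j}\,k_j!\binom{-m_j+k_j-1}{k_j}$ — so the factor $k_j!$ is extracted just as in the nonnegative case. Aside from this bookkeeping the argument is a purely formal consequence of Lemma 4.10 and $(4.7)$, so I anticipate no real obstacle; the substantive input, integrality of the coefficients of $F_{u'}(\Lambda)$, is already in hand.
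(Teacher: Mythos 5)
Your proof is correct and follows essentially the same approach as the paper: reduce to showing divisibility by each $\prod_{j=1}^N k_j!$ (with $K_u$ then handled by the lcm observation), apply the contiguity relation (4.7) to pull $F_u(\Lambda)$ down from $F_w(\Lambda)$ where $w = u+\sum_j k_j{\bf a}_j^+\in M_-$, invoke Lemma 4.10 for integrality of the coefficients of $F_w(\Lambda)$, and extract $\prod_j k_j!$ from the falling factorials produced by differentiation. The paper compresses the falling-factorial computation into the phrase ``an elementary calculation''; you have simply written out that calculation, including the correct handling of the negative exponents for $j\le\mu+1$.
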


\begin{proof}
Let $(k_1,\dots,k_N)\in{\mathbb N}^N$ satisfy (4.12) and put
\[ w = u + \sum_{j=1}^N k_j{\bf a}_j^+\in M_-. \]
It follows from (4.7) that
\[ \prod_{j=1}^N \bigg(\frac{\partial}{\partial\Lambda_j}\bigg)^{k_j} F_w(\Lambda) = F_u(\Lambda). \]
By Lemma 4.10 $F_w(\Lambda)$ has integer coefficients, so an elementary calculation shows that the coefficients of $F_u(\Lambda)$ are divisible by $\prod_{j=1}^N k_j!$.  
\end{proof}

Although the relevant hypergeometric functions appear as coefficients in the series $F(\Lambda,x)$, it is necessary for our proof of Theorem~1.16 to work with a related series which satisfies better $p$-adic estimates.  Define $G(\Lambda,x)$ to be
\begin{multline}
G(\Lambda,x) = \delta_-\big(F(\Lambda,x)\hat{\theta}_1(\Lambda,x)\big)= \\
\delta_-\bigg(\bigg(\prod_{j=1}^{\mu+1} \zeta(\gamma_0\Lambda_jx^{{\bf a}_j^+})\hat{\theta}_1(\Lambda_jx^{{\bf a}_j^+})\bigg)\bigg(\prod_{j=\mu+2}^N \hat{\theta}(\Lambda_jx^{{\bf a}_j^+})\bigg)\bigg).
\end{multline}
If we set
\begin{equation} 
G(\Lambda,x) = \sum_{u\in M_-} G_u(\Lambda)\gamma_0^{u_{n+1}}x^u, 
\end{equation}
then we have from (3.12) and (4.3) that
\begin{equation}
G_u(\Lambda) = \sum_{\substack{u^{(1)}\in M_-,u^{(2)}\in{\mathbb N}A\\ u^{(1)} + u^{(2)} = u}}F_{u^{(1)}}(\Lambda)\hat{\theta}_{1,u^{(2)}}(\Lambda).
\end{equation}

Let $K_{u^{(1)}}$ be defined as in Lemma 4.13.  By (3.13) we have
\begin{multline}
G_u(\Lambda) =  \\ 
\sum_{\substack{u^{(1)}\in M_-,u^{(2)}\in{\mathbb N}A\\ u^{(1)} + u^{(2)} = u}}K_{u^{(1)}}^{-1}F_{u^{(1)}}(\Lambda)\sum_{\substack{k_1,\dots,k_N\in{\mathbb N}\\ \sum_{j=1}^N k_j{\bf a}^+_j = u^{(2)}}} \bigg(\prod_{j=1}^N \hat{\theta}_{1,k_j}\bigg)\frac{K_{u^{(1)}}}{\prod_{j=1}^N k_j!} \Lambda_1^{k_1}\cdots\Lambda_N^{k_N}.
\end{multline}
The series $K_{u^{(1)}}^{-1}F_{u^{(1)}}(\Lambda)$ has integral coefficients by Lemma 4.13 and the ratio $\frac{K_{u^{(1)}}}{\prod_{j=1}^N k_j!}$ is an integer by the definition of $K_{u^{(1)}}$.  For each $u^{(2)}\in{\mathbb N}A$ in the inner sum on the right-hand side of (4.17) we have
\begin{equation}
{\rm ord}_p\: \prod_{j=1}^N \hat{\theta}_{1,k_j}\geq \frac{\sum_{j=1}^N k_j(p-1)}{p} = \frac{u^{(2)}_{n+1}(p-1)}{p} 
\end{equation}
by (3.10).  This implies that the series on the right-hand side of (4.17) converges to a series with integral coefficients, hence
\begin{equation}
\lvert G_u(\Lambda)\rvert\leq 1 \quad\text{for all $u\in M_-$.}
\end{equation} 

By analogy with Equation (4.9) we define $G(\Lambda)\in R_0$ by
\begin{equation}
G_{\bf b}(\Lambda) = (\Lambda_1\cdots\Lambda_{\mu+1})^{-1}G(\Lambda).
\end{equation}
\begin{lemma}
We have $G(\Lambda,x)\in S$, $\lvert G(\Lambda,x)\rvert=\lvert G_{\bf b}(\Lambda)\rvert=1$, and $G(\Lambda)$ assumes unit values on ${\mathcal D}$.
\end{lemma}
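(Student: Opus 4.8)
The plan is to extract the three assertions from estimates already in hand. First I would establish $G(\Lambda,x)\in S$. By construction $G(\Lambda,x)=\sum_{u\in M_-}G_u(\Lambda)\gamma_0^{u_{n+1}}x^u$, so I need $G_u(\Lambda)\in R_u$ and $\{\lvert G_u\rvert\}_u$ bounded. Homogeneity of degree $u$ is inherited from the homogeneity of $F_{u^{(1)}}$ (degree $u^{(1)}$) and $\hat\theta_{1,u^{(2)}}$ (degree $u^{(2)}$) in the decomposition (4.16), together with $u^{(1)}+u^{(2)}=u$; and the exponents of each monomial lie in $E$ because those of $F_{u^{(1)}}$ do (they lie in $E$ by (4.4)) while those of $\hat\theta_{1,u^{(2)}}$ are nonnegative, and a sum of a vector in $E$ with a nonnegative vector still has $l_j\le 0$ for $j\le\mu+1$ only if\ldots — here one must be slightly careful, but in fact the relevant exponents for $G_u$ are forced to lie in $E$ by the same reasoning that put the exponents of $F_u$ in $E$, since $G(\Lambda,x)$ is built from the same product of $\zeta$'s in the first $\mu+1$ slots. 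The boundedness $\lvert G_u\rvert\le 1$ for all $u\in M_-$ is exactly (4.19), which was derived just before the lemma from the integrality of $K_{u^{(1)}}^{-1}F_{u^{(1)}}(\Lambda)$ (Lemma 4.13), the integrality of the ratios $K_{u^{(1)}}/\prod k_j!$, and the estimate (4.18) for $\prod\hat\theta_{1,k_j}$. So $\lvert G(\Lambda,x)\rvert=\sup_u\lvert G_u\rvert\le 1$.

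Next I would show $\lvert G_{\bf b}(\Lambda)\rvert=1$, which simultaneously gives $\lvert G(\Lambda,x)\rvert=1$ (since that sup is $\le 1$ and is now attained at $u={\bf b}$) and, via $G_{\bf b}(\Lambda)=(\Lambda_1\cdots\Lambda_{\mu+1})^{-1}G(\Lambda)$, that $\lvert G(\Lambda)\rvert=1$, i.e.\ $G(\Lambda)$ assumes unit values on $\mathcal D$. For this I specialize (4.16) to $u={\bf b}$. The term with $u^{(2)}=0$ contributes $F_{\bf b}(\Lambda)\hat\theta_{1,0}(\Lambda)=F_{\bf b}(\Lambda)$, since $\hat\theta_{1,0}=1$ by (3.9). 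I claim this term already has norm $1$: indeed $F_{\bf b}(\Lambda)=(\Lambda_1\cdots\Lambda_{\mu+1})^{-1}F(\Lambda)$ and $F(\Lambda)$ has integer coefficients with constant term (the $l=0$ term) equal to $\big((-1)^0\prod 0!\big)/\prod 0! = 1$, so $\lvert F(\Lambda)\rvert=1$ by the invertibility criterion in Section 2 ($\lvert\xi\rvert=\lvert c_0\rvert$). It remains to check that every other term in (4.16), namely those with $u^{(2)}\in\mathbb NA$ nonzero (so $u^{(2)}_{n+1}\ge 1$), contributes something of norm $<1$, so that the leading behavior is governed entirely by the $u^{(2)}=0$ term. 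This follows from (4.18): a nonzero $u^{(2)}\in\mathbb NA$ has $u^{(2)}_{n+1}\ge 1$, forcing $\mathrm{ord}_p\prod\hat\theta_{1,k_j}\ge (p-1)/p>0$, so $\hat\theta_{1,u^{(2)}}(\Lambda)$ has all coefficients of positive ordinal; combined with the integrality of $K_{u^{(1)}}^{-1}F_{u^{(1)}}(\Lambda)$ and of $K_{u^{(1)}}/\prod k_j!$, the whole contribution for $u^{(2)}\ne 0$ reduces mod $p$ to zero. Hence $G_{\bf b}(\Lambda)\equiv F_{\bf b}(\Lambda)\pmod p$ as elements of $R_{\bf b}$, and in particular $\lvert G_{\bf b}(\Lambda)\rvert=\lvert F_{\bf b}(\Lambda)\rvert=1$.

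The main obstacle I anticipate is the bookkeeping in the first paragraph: verifying cleanly that the monomials appearing in $G_u(\Lambda)$ have exponent vectors in $E$, and more delicately that the double sum (4.17) really does converge in the norm of $R_u$ (not merely coefficientwise) to an element of $R_u$ — one needs the outer sum over $(u^{(1)},u^{(2)})$ to be controlled, which is why the estimate (4.18) is phrased in terms of $u^{(2)}_{n+1}$ and why the $K_{u^{(1)}}$ were introduced. Once one is comfortable that all of this is uniform in $u$, the conclusion $\lvert G(\Lambda,x)\rvert\le 1$ and $G(\Lambda,x)\in S$ is immediate, and the norm-$1$ statements follow from the explicit identification of the top-degree (in $p$-adic size) term with $F_{\bf b}(\Lambda)$, whose unit norm is elementary. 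I would write the argument in that order: (i) $G(\Lambda,x)\in S$ with $\lvert G(\Lambda,x)\rvert\le1$; (ii) $G_{\bf b}(\Lambda)\equiv F_{\bf b}(\Lambda)\pmod p$; (iii) $\lvert F_{\bf b}(\Lambda)\rvert=1$; (iv) assemble the three claimed equalities.
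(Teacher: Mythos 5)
Your proof follows the same route as the paper's: use (4.17)--(4.19) to get $G(\Lambda,x)\in S$ with $\lvert G(\Lambda,x)\rvert\le 1$, isolate the $u^{(2)}=0$ term of (4.17) at $u={\bf b}$ to identify the leading contribution with $F_{\bf b}(\Lambda)$, and conclude from the constant term of $F_{\bf b}$ being $1$ that $G_{\bf b}$ takes unit values on $\mathcal D$. One small imprecision worth correcting: from (4.18) the inner sum for $u^{(2)}\neq 0$ has $p$-ordinal at least $(p-1)/p$, which is strictly less than $1$ for every prime, so the congruence $G_{\bf b}(\Lambda)\equiv F_{\bf b}(\Lambda)\pmod p$ you assert does not actually follow (the paper writes $\pmod{\gamma_0}$, which is also a mild overstatement for $p=2$); all the argument needs, and all the estimate gives, is that the deviation $G_{\bf b}-F_{\bf b}$ has strictly positive $p$-ordinal, which already forces the constant term of $G(\Lambda)$ to be a principal unit and hence $G$ to assume unit values on $\mathcal D$.
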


\begin{proof}
The preceding calculation shows that $G(\Lambda,x)\in S$ and $\lvert G(\Lambda,x)\rvert\leq 1$.  The estimate (4.18) shows that the inner sum on the right-hand side of (4.17) has positive $p$-ordinal unless $u^{(2)} = 0$, so
\[ G_{\bf b}(\Lambda)\equiv F_{\bf b}(\Lambda)\pmod{\gamma_0}. \]
Since $F_{\bf b}(\Lambda)$ has constant term $1$, Eq.~(4.4) shows that $F_{\bf b}(\lambda)$ is a principal unit for $\lambda\in{\mathcal D}$.  It follows that $G_{\bf b}(\Lambda)$ assumes unit values on ${\mathcal D}$, so $\lvert G_{\bf b}(\Lambda)\rvert=1$.  Eq.~(4.20) now implies that $G(\Lambda)$ is an invertible element of $R_0$ and that $G(\Lambda)$ assumes unit values on ${\mathcal D}$.
\end{proof}

\begin{theorem}
{\bf (a)} The ratio $F_u(\Lambda)/F(\Lambda)$ extends to an analytic function on ${\mathcal D}_+$ for all $u\in M_-$ if and only if the ratio $G_u(\Lambda)/G(\Lambda)$ extends to an analytic function on ${\mathcal D}_+$ for all $u\in M_-$. When either of these equivalent conditions is satisfied, the ratios $F_u(\Lambda)/G(\Lambda)$ and $G_u(\Lambda)/F(\Lambda)$ also extend to ${\mathcal D}_+$ for all $u\in M_-$. \\
{\bf (b)} If either of the equivalent conditions of part {(a)} is satisfied, then the ratio ${\mathcal F}(\Lambda):= F(\Lambda)/F(\Lambda^p)$ extends to an analytic function on ${\mathcal D}_+$ if and only if the ratio ${\mathcal G}(\Lambda):= G(\Lambda)/G(\Lambda^p)$ extends to an analytic function on ${\mathcal D}_+$.  Furthermore, if these ratios extend, then for any $\lambda\in({\mathbb F}_q^\times)^N$ with $\bar{H}(\lambda)\neq 0$, we have
\[ \prod_{i=0}^{a-1} {\mathcal F}(\hat{\lambda}^{p^i}) = \prod_{i=0}^{a-1} {\mathcal G}(\hat{\lambda}^{p^i}), \]
where $\hat{\lambda}\in{\mathbb Q}_p(\zeta_{q-1})^N$ denotes the Teichm\"uller lifting of $\lambda$.
\end{theorem}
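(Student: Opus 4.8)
The plan is to prove (a) first. The two generating series are linked by (4.14); the first step is to complement it with its inverse. Since $F(\Lambda,x)$ is supported on monomials whose $x$-exponents are all negative, while $\hat\theta_1(\Lambda,x)^{-1}=\prod_j\hat\theta_1(\Lambda_jx^{{\bf a}^+_j})^{-1}$ is supported on monomials whose $x$-exponents lie in ${\mathbb N}A$, the same bookkeeping with $\delta_-$ that produced (4.14)--(4.16) will give
$$F(\Lambda,x)=\delta_-\big(G(\Lambda,x)\hat\theta_1(\Lambda,x)^{-1}\big),\qquad
F_u(\Lambda)=\sum_{\substack{u^{(1)}\in M_-,\ u^{(2)}\in{\mathbb N}A\\ u^{(1)}+u^{(2)}=u}}G_{u^{(1)}}(\Lambda)\,\hat\theta'_{1,u^{(2)}}(\Lambda)\qquad(u\in M_-)$$
(write $F\hat\theta_1=G+R$ with $R$ supported off $M_-$, multiply by $\hat\theta_1^{-1}$, and use $\delta_-(R\hat\theta_1^{-1})=0$ because $\hat\theta_1^{-1}$ has nonnegative $x$-exponents). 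The key input will be a uniform estimate on the individual summands of (4.16) and of this inverse identity. Hypothesis (1.9) makes $x_0\cdots x_n$ factor as $x^{{\bf a}_1}\cdots x^{{\bf a}_{\mu+1}}$, which partitions $x_0,\dots,x_n$ into $\mu+1$ blocks; comparing the $i$-th coordinate of $u\in M_-$ with the block containing $x_i$ shows that each $F_{u^{(1)}}(\Lambda)\hat\theta_{1,u^{(2)}}(\Lambda)$ and each $G_{u^{(1)}}(\Lambda)\hat\theta'_{1,u^{(2)}}(\Lambda)$ already has all of its $\Lambda_j$-exponents ($1\le j\le\mu+1$) strictly negative, hence lies in $R_u$; and combining Lemma 4.13, its analogue for $G_{u^{(1)}}$ (established by the same factorial-divisibility bookkeeping, using that $K_{u^{(1)}}\mid K_{w^{(1)}}$ whenever $w^{(1)}\le u^{(1)}$ coordinatewise), and the estimates (3.10), (3.15), each such summand has norm at most $p^{-u^{(2)}_{n+1}(p-1)/p}$ on ${\mathcal D}$.

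Granting these estimates, assume $F_u/F\in R'_u$ for all $u\in M_-$. Dividing (4.16) by $F(\Lambda)$ gives $G_u/F=\sum_{u^{(1)}+u^{(2)}=u}(F_{u^{(1)}}/F)\,\hat\theta_{1,u^{(2)}}$; each summand lies in $R'_u$, being the product of $F_{u^{(1)}}/F\in R'_{u^{(1)}}$ with the polynomial $\hat\theta_{1,u^{(2)}}$ (such a product still has Laurent coefficients tending to $0$ and, by the block argument, exponents in $E$), and since the summands' norms tend to $0$ the series converges in the complete space $R'_u$; thus $G_u/F\in R'_u$ for all $u$. Taking $u={\bf b}$ and multiplying by $\Lambda_1\cdots\Lambda_{\mu+1}$ yields $G/F=G_{\bf b}/F_{\bf b}\in R'_0$, and since $|G/F-1|\le|\gamma_0|<1$ by the congruence $G_{\bf b}\equiv F_{\bf b}\pmod{\gamma_0}$ from the proof of Lemma 4.21, $G/F$ is a $1$-unit of $R'_0$, so $F/G=(G/F)^{-1}\in R'_0$ as well. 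Then $G_u/G=(G_u/F)(F/G)$ and $F_u/G=(F_u/F)(F/G)$ lie in $R'_u$, proving the forward implication and the two supplementary assertions; the reverse implication is symmetric, using the inverse identity and the $\hat\theta'_1$-estimate to pass from $G_u/G\in R'_u$ to $F_u/G\in R'_u$, then to $F/G,\ G/F\in R'_0$, then to $F_u/F=(F_u/G)(G/F)\in R'_u$. I expect this transfer step — pushing the $R'$-condition through multiplication by $\hat\theta_1(\Lambda,x)^{\pm1}$ followed by $\delta_-$ — to be the main obstacle: it is here that (1.9) is genuinely used (to keep the summands inside $R_u$ despite $\hat\theta_{1,u^{(2)}}$ being unbounded on ${\mathcal D}$), and here that the divisibility of $G_{u^{(1)}}$ by $K_{u^{(1)}}$ is needed to secure the uniform $p$-adic decay at every prime (without it the reverse implication would fail for $p=2$).

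For (b), set $\phi:=G/F$, which by (a) lies in $R'_0$ and, being a $1$-unit there, is nowhere zero on ${\mathcal D}_+$. The map $\Lambda\mapsto\Lambda^p$ carries ${\mathcal D}_+$ into itself — the only nonobvious requirement, $|(\Lambda_1^p\cdots\Lambda_{\mu+1}^p)^{-(p-1)}H(\Lambda^p)|=1$, follows from $H(\Lambda^p)\equiv H(\Lambda)^p\pmod p$ — so precomposition preserves $R'_0$, and $\phi^{(p)}(\Lambda):=\phi(\Lambda^p)\in R'_0$ is again a nowhere-vanishing $1$-unit. A direct computation gives ${\mathcal G}/{\mathcal F}=\phi/\phi^{(p)}\in R'_0$; hence ${\mathcal F}\in R'_0$ if and only if ${\mathcal G}={\mathcal F}\cdot(\phi/\phi^{(p)})\in R'_0$, which is the claimed equivalence. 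Finally, for $\lambda\in({\mathbb F}_q^\times)^N$ with $\bar H(\lambda)\ne0$ each Teichm\"uller point $\hat\lambda^{p^i}$ lies in ${\mathcal D}_+$ (its coordinates have absolute value $1$, and $|H(\hat\lambda^{p^i})|=1$ because $\bar H(\lambda^{p^i})=\bar H(\lambda)^{p^i}\ne0$), so $\phi(\hat\lambda^{p^i})$ and $\phi^{(p)}(\hat\lambda^{p^i})=\phi(\hat\lambda^{p^{i+1}})$ are defined and nonzero, and using $\hat\lambda^{p^a}=\hat\lambda$,
$$\prod_{i=0}^{a-1}{\mathcal G}(\hat\lambda^{p^i})=\prod_{i=0}^{a-1}{\mathcal F}(\hat\lambda^{p^i})\cdot\prod_{i=0}^{a-1}\frac{\phi(\hat\lambda^{p^i})}{\phi(\hat\lambda^{p^{i+1}})}=\prod_{i=0}^{a-1}{\mathcal F}(\hat\lambda^{p^i})\cdot\frac{\phi(\hat\lambda)}{\phi(\hat\lambda^{p^a})}=\prod_{i=0}^{a-1}{\mathcal F}(\hat\lambda^{p^i}),$$
which is the asserted identity.
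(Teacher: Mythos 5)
Your proof is correct and follows the same route as the paper: derive the inverse relation $F(\Lambda,x)=\delta_-\bigl(G(\Lambda,x)\hat\theta_1(\Lambda,x)^{-1}\bigr)$, divide the coefficient formula (4.17) and its inverse by $F(\Lambda)$ (resp.\ $G(\Lambda)$), deduce that ${\mathcal H}=G/F$ is a unit in $R_0'$, and then cancel $\prod_i {\mathcal H}(\hat\lambda^{p^i})/{\mathcal H}(\hat\lambda^{p^{i+1}})$ telescopically. You have, if anything, been more careful than the paper about three points it leaves implicit — that the block structure from (1.9) forces each summand $F_{u^{(1)}}\hat\theta_{1,u^{(2)}}$ (and its $G$/$\hat\theta_1'$ analogue) to have $\Lambda$-exponents in $E$ and hence to lie in $R_u$, that $K_{u^{(1)}}\mid K_{w}$ for $w\le u^{(1)}$ coordinatewise gives the Lemma 4.13 analogue for $G$, and that $\Lambda\mapsto\Lambda^p$ preserves ${\mathcal D}_+$ — all of which are genuinely needed and correctly supplied.
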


\begin{proof}
Suppose that the ratios $F_u(\Lambda)/F(\Lambda)$ extend to analytic functions on ${\mathcal D}_+$ for all $u\in M_-$.  Divide Equation~(4.17) by $F(\Lambda)$:
\begin{multline}
\frac{G_u(\Lambda)}{F(\Lambda)} =  \\ 
\sum_{\substack{u^{(1)}\in M_-,u^{(2)}\in{\mathbb N}A\\ u^{(1)} + u^{(2)} = u}}K_{u^{(1)}}^{-1}\frac{F_{u^{(1)}}(\Lambda)}{F(\Lambda)}\sum_{\substack{k_1,\dots,k_N\in{\mathbb N}\\ \sum_{j=1}^N k_j{\bf a}^+_j = u^{(2)}}} \bigg(\prod_{j=1}^N \hat{\theta}_{1,k_j}\bigg)\frac{K_{u^{(1)}}}{\prod_{j=1}^N k_j!} \Lambda_1^{k_1}\cdots\Lambda_N^{k_N}.
\end{multline}
Since $F(\Lambda)$ assumes unit values on ${\mathcal D}$, our earlier estimates then show that this series converges to an analytic function on ${\mathcal D}_+$ that is bounded by $1$. 

In particular, the ratio
\[ \frac{G(\Lambda)}{F(\Lambda)} = \frac{\Lambda_1\cdots\Lambda_{\mu+1}G_{\bf b}(\Lambda)}{F(\Lambda)} \]
 extends to an analytic function on ${\mathcal D}_+$.  Since $G(\Lambda)$ assumes unit values on ${\mathcal D}$, it follows that $G(\Lambda)/F(\Lambda)$ assumes unit values on ${\mathcal D}_+$, hence its reciprocal $F(\Lambda)/G(\Lambda)$ is also analytic on ${\mathcal D}_+$.  Thus the product
\[ \frac{G_u(\Lambda)}{G(\Lambda)} = \frac{G_u(\Lambda)}{F(\Lambda)}\frac{F(\Lambda)}{G(\Lambda)} \]
is analytic on ${\mathcal D}_+$.  

Now suppose that the ratios $G_u(\Lambda)/G(\Lambda)$ extend to analytic functions on ${\mathcal D}_+$.  It follows from (4.14) that
\begin{equation}
F(\Lambda,x) = \delta_-\big(G(\Lambda,x)\hat{\theta}_1(\Lambda,x)^{-1}\big).
\end{equation}
One can then argue as before, using the analogue of Equation (4.17) and applying (3.17) and (3.18).  This completes the proof of part (a).

To prove part (b), let ${\mathcal H}(\Lambda) = G(\Lambda)/F(\Lambda)$.  When the equivalent conditions of part (a) are satisfied, we showed in the proof of part (a) that the function ${\mathcal H}(\Lambda)$  and its reciprocal extend to analytic functions on ${\mathcal D}_+$ and assume unit values there.  The first assertion of part (b) then follows from the equation
\begin{equation}
\frac{G(\Lambda)}{G(\Lambda^p)} = \frac{F(\Lambda)}{F(\Lambda^p)} \frac{{\mathcal H}(\Lambda)}{{\mathcal H}(\Lambda^p)}.
\end{equation}
Since ${\mathcal H}$ is analytic on ${\mathcal D}_+$, we have ${\mathcal H}(\hat{\lambda}^{p^a}) = {\mathcal H}(\hat{\lambda})$ when $\hat{\lambda}^{p^a} = \hat{\lambda}$, so
\[ \prod_{i=0}^{a-1} \frac{{\mathcal H}(\hat{\lambda}^{p^i})}{{\mathcal H}(\hat{\lambda}^{p^{i+1}})} = 1. \]
The second assertion of part (b) now follows from Equation (4.25). 
\end{proof}

By Theorem 4.22, Theorem 1.16 is equivalent to the following statement, namely, the assertion of Theorem~1.16 with $F(\Lambda)$ replaced by $G(\Lambda)$:  
\begin{theorem}
Under hypotheses $(1.5)$ and $(1.9)$, the ratio $G(\Lambda)/G(\Lambda^p)$ extends to an analytic function ${\mathcal G}(\Lambda)$ on~${\mathcal D}_+$.  Let $\lambda\in({\mathbb F}_q^\times)^N$ and let $\hat{\lambda}\in{\mathbb Q}_p(\zeta_{q-1})^N$ be its Teichm\"uller lifting.  If $\bar{H}(\lambda)\neq 0$, then $\hat{\lambda}^{p^i}\in{\mathcal D}_+$ for $i=0,\dots,a-1$ and
\[ \rho_{\min}(\lambda) = q^\mu\prod_{i=0}^{a-1} {\mathcal G}(\hat{\lambda}^{p^i}). \]
\end{theorem}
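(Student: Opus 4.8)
The plan is to follow Dwork's analytic theory of Frobenius for families, in the dual form associated with the $\theta_\infty$-splitting function $\theta(t)$ of Section~3. First I would construct the relative Frobenius operator $\alpha$ on the Banach space $S$, built from multiplication by the kernel $\theta(\Lambda,x)=\prod_{j=1}^N\theta(\Lambda_jx^{{\bf a}_j^+})$, the Dwork contraction $\psi_p$ sending $x^u\mapsto x^{u/p}$ (zero unless $p\mid u$), the truncation $\delta_-$, and the substitution $\Lambda_j\mapsto\Lambda_j^p$ on the base, so that $\alpha$ is $p$-semilinear and, after specialization at a Teichm\"uller point, relates the fibers over $\hat\lambda$ and $\hat\lambda^p$, its $a$-fold iterate $\alpha_q$ fixing $\hat\lambda$. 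The estimate ${\rm ord}_p\,\theta_i\geq i/(p-1)$ of $(3.1)$, together with the finiteness (recorded in Section~2) of the set of $k\in{\mathbb N}^N$ contributing to a fixed $u\in M_-$, shows that $\alpha$ carries $S$ into $S$ and $S'$ into $S'$, is bounded, and becomes completely continuous after specialization; the contiguity relations $(4.7)$ then show $\alpha$ is compatible with the operators $\partial/\partial\Lambda_j$, hence descends to the $A$-hypergeometric system with parameter~${\bf b}$.

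The analytic-continuation statement (Corollary~5.17) rests on the observation that, since $\theta(t)=\hat\theta(t)/\hat\theta(t^p)$ and $G(\Lambda,x)=\delta_-\big(F(\Lambda,x)\hat\theta_1(\Lambda,x)\big)$ is built from the rapidly convergent factor $\hat\theta_1$ rather than from the merely bounded $\hat\theta$, the generating series $G(\Lambda,x)$ is a Frobenius eigenvector, modulo coboundaries, whose eigenvalue is of ``$\theta$-type'' and hence converges on a strictly larger polydisc than $G(\Lambda)$ itself. Concretely I would establish a functional equation
\[ \alpha\big(G(\Lambda,x)\big)={\mathcal G}(\Lambda)\,G(\Lambda^p,x^p)+(\text{coboundary}), \]
compare the coefficients of $x^{\bf b}$ on the two sides to identify the scalar with ${\mathcal G}(\Lambda)=G(\Lambda)/G(\Lambda^p)$ (the monomial relating $G$ and $G_{\bf b}$ in $(4.20)$ accounting for the factor $(\Lambda_1\cdots\Lambda_{\mu+1})^{1-p}$), and then read off from the boundedness of $\alpha$ on $S'$ --- combined with the sharpened estimates $(3.10)$ and $(3.16)$ and the divisibility by $K_{u^{(1)}}$ already exploited in $(4.17)$--$(4.19)$ --- that ${\mathcal G}(\Lambda)$, and more generally every ratio $G_u(\Lambda)/G(\Lambda)$, lies in $R'_0$, i.e.\ extends analytically to ${\mathcal D}_+$. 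That $\hat\lambda^{p^i}\in{\mathcal D}_+$ when $\bar H(\lambda)\neq 0$ is then immediate: the Teichm\"uller coordinates satisfy $|\hat\lambda_j|=1$, while $H(\hat\lambda^{p^i})\equiv\bar H(\lambda^{p^i})=\bar H(\lambda)^{p^i}\neq 0$, whence $|(\hat\lambda_1^{p^i}\cdots\hat\lambda_{\mu+1}^{p^i})^{-(p-1)}H(\hat\lambda^{p^i})|=1$.

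For the formula for $\rho_{\min}(\lambda)$ I would invoke Dwork's trace formula: the $q$-power Frobenius $\alpha_q$ computes $P_\lambda(t)$ as an alternating product of Fredholm determinants of $\alpha_q$ on the Koszul-type complex attached to $f_{\hat\lambda}$, and the hypothesis $n+1=d(\mu+1)$ singles out the one-dimensional ``co-level $\mu$'' subquotient --- the cohomological incarnation of the $n$-form $(1.10)$, and the place where $h^{\mu,n-1-\mu}=1$ enters --- on which the unique reciprocal root of $q$-ordinal $\mu$ lives. Iterating the functional equation of the previous paragraph $a$ times around the Frobenius orbit $\hat\lambda,\hat\lambda^p,\dots,\hat\lambda^{p^{a-1}}$ and using $\hat\lambda^{p^a}=\hat\lambda$ shows that $\alpha_q$ acts on that line by $q^\mu\prod_{i=0}^{a-1}{\mathcal G}(\hat\lambda^{p^i})$; this product does \emph{not} telescope, because $G$ itself does not extend to the Teichm\"uller point --- only ${\mathcal G}$ does. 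Comparison with Proposition~1.7, which determines this quantity modulo $p$ and in particular shows that it is a $p$-adic unit exactly when $\bar H(\lambda)\neq 0$, then identifies it with $\rho_{\min}(\lambda)$.

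The step I expect to be the main obstacle is the convergence gain asserted in the second paragraph: that the ``$\theta$-type'' eigenvalue ${\mathcal G}(\Lambda)$ genuinely extends to the boundary locus ${\mathcal D}_+$. This is the whole reason for replacing $F$ by $G$, and carrying it out requires propagating the estimates $(3.10)$ and $(3.16)$ through the double sum $(4.17)$ while controlling both the denominators $K_{u^{(1)}}$ and the loss incurred in dividing by the unit $G(\Lambda)$ (equivalently $F(\Lambda)$), uniformly over all $u\in M_-$. A secondary difficulty is the cohomological bookkeeping in the third paragraph --- isolating exactly the subquotient of the Dwork complex that carries $\rho_{\min}$ and checking that $\alpha_q$ acts there by the asserted scalar --- but here Proposition~1.7 supplies an independent consistency check that closes the loop.
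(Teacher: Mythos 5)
Your proposal tracks the paper's overall architecture closely: the dual Frobenius operator $\alpha^*$ built from multiplication by $\theta(\Lambda,x)$ together with truncation $\delta_-$ and the substitution $\Lambda\mapsto\Lambda^p$, $x\mapsto x^p$; an eigenvector equation for the generating series $G(\Lambda,x)$; specialization at Teichm\"uller points; Dwork's trace formula (Section~7); and Proposition~1.7 as the arithmetic normalization closing the loop. You also correctly identify why $F$ is replaced by $G$ and why the product $\prod_i{\mathcal G}(\hat\lambda^{p^i})$ does not telescope. But there are two genuine gaps, both in the step you yourself flag as the ``main obstacle.''

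First, the eigenvalue identity is not established. The paper does not derive $\alpha^*(G(\Lambda,x))=p^{\mu+1}G(\Lambda,x)$ from a ``$\theta$-type'' heuristic plus a coboundary term; it proves it exactly in Section~6 by a one-variable reduction. One introduces the space $C$ of series in $t^{-1}$, the operator $D'=\delta_-\circ\hat\theta(t)\circ t\frac{d}{dt}\circ\hat\theta(t)$ whose kernel is spanned by $Q(t)=\delta_-(\hat\theta_1(t)q(t))$, and the dual pair $(C,C_0^*)$ with the pairing $\langle\,\cdot\,,\cdot\,\rangle$ satisfying $\langle D'\xi,\eta\rangle=-\langle\xi,D\eta\rangle$ and $\langle\alpha'\xi,\eta\rangle=\langle\xi,\alpha\eta\rangle$. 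Since $\alpha$ acts on ${\mathbb C}_p\cong C_0^*/DC_0^*$ by multiplication by $p$ and $\langle Q(t),D(1)\rangle\neq 0$, one concludes $\alpha'(Q(t))=pQ(t)$; Lemma~6.19 rewrites $G(\Lambda,x)$ in terms of $Q$ and $\hat\theta$, and the $(\mu+1)$-fold application of the one-variable result gives the eigenvalue $p^{\mu+1}$. None of this mechanism appears in your sketch, and without it the ``coboundary'' term you allow is not controlled at all --- there is no cohomology complex in play in Section~5, so ``modulo coboundaries'' has no meaning here.

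Second, boundedness of $\alpha^*$ on $S'$ does not yield $G_u(\Lambda)/G(\Lambda)\in R_u'$. The paper's mechanism is the contraction mapping (Proposition~5.13): after normalizing $\alpha^*$ to the operator $\phi$ on the affine slice $T=\{\xi\in S\mid\xi_{\bf b}=(\Lambda_1\cdots\Lambda_{\mu+1})^{-1},\ |\xi|=1\}$, one shows $\phi$ is a strict contraction, hence has a unique fixed point in $T$; since $\phi$ preserves $T'=T\cap S'$ and that unique fixed point is $G(\Lambda,x)/G(\Lambda)$ (by the eigenvalue identity above), the fixed point must lie in $T'$, giving the analytic continuation of every $G_u/G$ and hence of ${\mathcal G}=G/G(\Lambda^p)$ (Corollary~5.17). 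The contraction estimate rests on Corollary~5.6 (the gain of one extra power of $p$ when $\xi_{\bf b}=0$), the invertibility of $(\Lambda_1\cdots\Lambda_{\mu+1})^{-(p-1)}\theta_{-(p-1){\bf b}}(\Lambda)$ on ${\mathcal D}_+$ (Corollary~5.8 --- this is exactly where the defining inequality $|(\Lambda_1\cdots\Lambda_{\mu+1})^{-(p-1)}H(\Lambda)|=1$ of ${\mathcal D}_+$ enters via Lemma~5.7), and Lemma~5.10. Merely citing the estimates (3.10), (3.16) and the $K_{u^{(1)}}$-divisibility used in (4.17)--(4.19) establishes $G_u\in R_u$ with $|G_u|\leq 1$, i.e.\ convergence on ${\mathcal D}$; it does not by itself give continuation to the boundary locus ${\mathcal D}_+$. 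A minor further point: the eigenvalue of $\alpha_{\hat\lambda}$ on $L_0^\Gamma$ is $q^{\mu+1}\prod_i{\mathcal G}(\hat\lambda^{p^i})$ (Theorem~7.29), and the extra factor of $q^{-1}$ giving $\rho_{\min}=q^\mu\prod_i{\mathcal G}(\hat\lambda^{p^i})$ comes from the shift $t\mapsto qt$ in formula (7.12) for $P_\lambda(qt)$, not from the cohomological degree directly.
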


The remainder of this article is devoted to the proofs of Proposition~1.7 and Theorem~4.26.

\section{Contraction mapping}

We construct a map $\phi$ on a certain space of formal series whose coefficients 
are $p$-adic analytic functions.  Hypothesis (1.5) will imply that $\phi$ is a contraction mapping.

Let 
\[ \xi(\Lambda,x) = \sum_{\nu\in M_-} \xi_{\nu}(\Lambda) \gamma_0^{\nu_{n+1}}x^{\nu}\in S. \]
We claim that the product $\theta(\Lambda,x)\xi(\Lambda^p,x^p)$ is well defined as a formal series in $x$.
Formally we have
\[ \theta(\Lambda,x)\xi(\Lambda^p,x^p) = \sum_{\rho\in M} \zeta_{\rho}(\Lambda)x^{\rho}, \]
where
\begin{equation}
\zeta_{\rho}(\Lambda) = \sum_{\substack{u\in{\mathbb N}A,\,\nu\in M_-\\ u+p\nu=\rho}}\gamma_0^{\nu_{n+1}}\theta_u(\Lambda)\xi_{\nu}(\Lambda^p).
\end{equation}
Since $\theta_u(\Lambda)$ is a polynomial, the product $\theta_u(\Lambda)\xi_{\nu}(\Lambda^p)$ is clearly well defined.  It follows from (3.21), (3.23), and the equality $u+p\nu=\rho$ that the coefficients of $\gamma_0^{\nu_{n+1}}\theta_u(\Lambda)$ all have $p$-ordinal at least $\big(\rho_{n+1}/(p-1)\big)-\nu_{n+1}$.  Since $\lvert\xi_{\nu}(\Lambda)\rvert$ is bounded independently of $\nu$ and there are only finitely many terms on the right-hand side of (5.1) with a given value of~$\nu_{n+1}$, the series~(5.1) converges to an element of $R_{\rho}$.  

Define for $\xi(\Lambda,x)\in S$
\begin{align*}
\alpha^*(\xi(\Lambda,x)) &= \delta_-\big(\theta(\Lambda,x)\xi(\Lambda^p,x^p)\big) \\
 &= \sum_{\rho\in M_-} \zeta_{\rho}(\Lambda)x^{\rho}.
\end{align*}
For $\rho\in M_-$, put $\eta_{\rho}(\Lambda) = \gamma_0^{-\rho_{n+1}}\zeta_{\rho}(\Lambda)$, so that
\begin{equation}
\alpha^*\big(\xi(\Lambda,x)\big) = \sum_{\rho\in M_-} \eta_{\rho}(\Lambda)\gamma_0^{\rho_{n+1}}x^{\rho}
\end{equation}
with (by (5.1))
\begin{equation}
\eta_{\rho}(\Lambda) = \sum_{\substack{u\in{\mathbb N}A,\,\nu\in M_-\\ u+p\nu=\rho}}\gamma_0^{-\rho_{n+1}+\nu_{n+1}}\theta_u(\Lambda)\xi_{\nu}(\Lambda^p).
\end{equation}

\begin{proposition}
The map $\alpha^*$ is an endomorphism of $S$ and of $S'$, and for $\xi(\Lambda,x)\in S$ we have
\begin{equation}
\lvert\alpha^*(\xi(\Lambda,x))\rvert\leq \lvert p^{\mu+1}\xi(\Lambda,x)\rvert.
\end{equation}
\end{proposition}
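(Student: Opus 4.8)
The plan is to read off both assertions directly from the explicit formula (5.3) for $\eta_\rho$. Linearity of $\alpha^*$ in $\xi$ is immediate from (5.3), and the paragraph preceding the proposition already shows that $\zeta_\rho(\Lambda)$, and hence $\eta_\rho(\Lambda)=\gamma_0^{-\rho_{n+1}}\zeta_\rho(\Lambda)$, lies in $R_\rho$ for every $\rho\in M_-$; so for the statement that $\alpha^*$ is an endomorphism of $S$ it remains only to prove the bound (5.5), which forces $\{\lvert\eta_\rho\rvert\}_\rho$ to be bounded. The single place hypothesis (1.5) intervenes is elementary: if $\nu\in M_-$, then $\nu$ lies on the hyperplane $\sum_{i=0}^n\nu_i=d\nu_{n+1}$ with all coordinates $\le-1$, so $d\nu_{n+1}\le-(n+1)$ and hence $\nu_{n+1}\le-(n+1)/d=-(\mu+1)$.

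To prove (5.5) I would fix $\rho\in M_-$ and bound the coefficients of $\eta_\rho(\Lambda)$, whose supremum is the norm $\lvert\eta_\rho\rvert$. By (3.24) every coefficient of $\theta_u(\Lambda)$ has $p$-ordinal at least $u_{n+1}/(p-1)$, so every coefficient of $\gamma_0^{-\rho_{n+1}+\nu_{n+1}}\theta_u(\Lambda)$ has $p$-ordinal at least
\[ \frac{-\rho_{n+1}+\nu_{n+1}}{p-1}+\frac{u_{n+1}}{p-1}=-\nu_{n+1}, \]
using $u_{n+1}=\rho_{n+1}-p\nu_{n+1}$ from $u+p\nu=\rho$; by the bound above this is $\ge\mu+1$. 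Since $\Lambda\mapsto\Lambda^p$ carries $\mathcal D$ into itself, $\lvert\xi_\nu(\Lambda^p)\rvert\le\lvert\xi_\nu\rvert\le\lvert\xi(\Lambda,x)\rvert$. Multiplying $\gamma_0^{-\rho_{n+1}+\nu_{n+1}}\theta_u(\Lambda)$ by $\xi_\nu(\Lambda^p)$, whose coefficients are bounded by $\lvert\xi(\Lambda,x)\rvert$, therefore produces a summand of (5.3) all of whose coefficients are bounded in absolute value by $\lvert p^{\mu+1}\xi(\Lambda,x)\rvert$; by the ultrametric inequality the same holds for the coefficients of the convergent sum $\eta_\rho(\Lambda)$, so $\lvert\eta_\rho\rvert\le\lvert p^{\mu+1}\xi(\Lambda,x)\rvert$, and taking the supremum over $\rho\in M_-$ gives (5.5).

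For the statement that $\alpha^*$ preserves $S'$ I would show that each summand $\gamma_0^{-\rho_{n+1}+\nu_{n+1}}\theta_u(\Lambda)\xi_\nu(\Lambda^p)$ of (5.3) lies in $R'_\rho$ and then use that $R'_\rho$ is complete. First, $\Lambda\mapsto\Lambda^p$ also carries $\mathcal D_+$ into itself, so precomposing with it a homogeneous rational function on $\mathcal D_+$ approximating $\xi_\nu\in R'_\nu$ yields a homogeneous rational function on $\mathcal D_+$, of degree $p\nu$, approximating $\xi_\nu(\Lambda^p)$ equally well; hence $\xi_\nu(\Lambda^p)\in R'_{p\nu}$, and each approximant $h_k$ of $\xi_\nu$ lies in $R_\nu$ and so has Laurent support in $E$. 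Next comes the structural use of (1.9): the vectors $\mathbf{a}_1,\dots,\mathbf{a}_{\mu+1}$ are the indicator vectors of a partition of $\{0,\dots,n\}$ into $\mu+1$ blocks of size $d$, and this forces every $\Lambda$-monomial $\Lambda^{\kappa+p\sigma}$ occurring in $\theta_u(\Lambda)\xi_\nu(\Lambda^p)$, with $u+p\nu=\rho\in M_-$ and $\sigma\in E$, to satisfy $\kappa_j+p\sigma_j<0$ for $j=1,\dots,\mu+1$, because the contribution of the indices $j>\mu+1$ to each of the first $n+1$ coordinates of $\rho$ is nonnegative while those coordinates of $\rho$ are negative. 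Consequently $\theta_u(\Lambda)\xi_\nu(\Lambda^p)$ and each $\theta_u(\Lambda)h_k(\Lambda^p)$ lie in $R_\rho$, and since the coefficients of $\theta_u$ are $p$-integral the $R_\rho$-norm of their difference is at most $\lvert h_k-\xi_\nu\rvert$; as the sup norm on $R_\rho$ agrees with the sup over $\mathcal D_+$, this exhibits $\theta_u(\Lambda)\xi_\nu(\Lambda^p)$ as a uniform limit on $\mathcal D_+$ of homogeneous rational functions, hence an element of $R'_\rho$. Summing over the relevant $(u,\nu)$ — a norm-convergent sum with only finitely many terms for each value of $\nu_{n+1}$ — and invoking completeness of $R'_\rho$ shows $\eta_\rho\in R'_\rho$, i.e., $\alpha^*(\xi)\in S'$.

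The step I expect to be the main obstacle is this last bookkeeping: checking that multiplying the (generally unbounded) polynomial $\theta_u(\Lambda)$ against the series $\xi_\nu(\Lambda^p)$ still lands in $R_\rho$, equivalently $R'_\rho$. This is exactly where (1.9) is essential, and it is the content behind the assertion, already made in the text, that $\zeta_\rho(\Lambda)\in R_\rho$. Once that is in hand, the remaining ingredients — the ordinal computation from (3.24), the hyperplane bound $\nu_{n+1}\le-(\mu+1)$ supplied by (1.5), and the stability of $\mathcal D$ and $\mathcal D_+$ under $\Lambda\mapsto\Lambda^p$ — are routine.
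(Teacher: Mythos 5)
Your proposal is correct and the core estimate (5.5) is proved exactly as in the paper: reduce to bounding $\lvert\gamma_0^{-\rho_{n+1}+\nu_{n+1}}\theta_u(\Lambda)\rvert$, apply (3.21) and (3.23) — you cite (3.24) but clearly mean (3.23) — and simplify $(-\rho_{n+1}+\nu_{n+1}+u_{n+1})/(p-1)$ to $-\nu_{n+1}$ using $u+p\nu=\rho$. You usefully make explicit the step the paper compresses into ``because $\nu\in M_-$,'' namely that (1.5) is precisely what turns $\nu\in M_-$ into $-\nu_{n+1}\geq\mu+1$.

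Where you go beyond the paper is the $S'$ part, which the paper asserts without detail. Your argument — approximate each $\xi_\nu$ by homogeneous rational functions $h_k$ on $\mathcal D_+$, note $h_k(\Lambda^p)$ approximates $\xi_\nu(\Lambda^p)$, multiply by the polynomial $\theta_u(\Lambda)$, and invoke completeness of $R'_\rho$ — is the right one, and you correctly flag the two non-obvious inputs: (i) $\mathcal D_+$ is stable under $\Lambda\mapsto\Lambda^p$ (true, but not proved in your sketch: it follows from the fact that $(\Lambda_1\cdots\Lambda_{\mu+1})^{-(p-1)}H(\Lambda)$ has $p$-integral coefficients with unit constant term and $E$-exponents, so its value at $\Lambda^p$ differs from its $p$-th power at $\Lambda$ by a Laurent polynomial in $p{\mathbb Z}_p[E]$, which is $<1$ in norm on $\mathcal D_+$); and (ii) hypothesis (1.9) is what forces the $\Lambda$-exponents $\kappa+p\sigma$ of $\theta_u(\Lambda)\xi_\nu(\Lambda^p)$ to lie in $E$, so that $\eta_\rho$ really lands in $R_\rho$ as required by the definition of $S$. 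Point (ii) is a genuine gap in the paper's text preceding Proposition 5.4 — the paper simply declares ``the series (5.1) converges to an element of $R_\rho$'' — and the structural fact you supply is the same one the paper only spells out later, in the paragraph before Lemma 6.19. So your proof is essentially the paper's, with the extra bookkeeping done honestly.
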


\begin{proof}
By (5.2), the proposition will follow from the estimate
\[ \lvert\eta_{\rho}(\Lambda)\rvert\leq \lvert p^{\mu+1}\xi(\Lambda,x)\rvert\quad\text{for all $\rho\in M_-$.} \]
Using (5.3), we see that this estimate will follow in turn from the estimate
\[ \lvert\gamma_0^{-\rho_{n+1}+\nu_{n+1}}\theta_u(\Lambda)\rvert\leq \lvert p^{\mu+1}\rvert \]
for all $u\in{\mathbb N}A$, $\nu\in M_-$, with $u+p\nu = \rho$.  From (3.21) and (3.23) we see that all coefficients of $\gamma_0^{-\rho_{n+1}+\nu_{n+1}}\theta_u(\Lambda)$ have $p$-ordinal greater than or equal to
\[ \frac{-\rho_{n+1}+\nu_{n+1}+u_{n+1}}{p-1}. \]
Since $u+p\nu=\rho$, this expression simplifies to $-\nu_{n+1}$, which is greater than or equal to $\mu+1$ because $\nu\in M_-$.  
\end{proof}

Note that the equality $-\nu_{n+1} = \mu+1$ occurs for only one point $\nu\in M_-$, namely, $\nu = (-1,\dots,-1,-\mu-1)$ ($={\bf b}$).  The following corollary is then an immediate consequence of the proof of Proposition~5.4.
\begin{corollary}
If $\xi_{\bf b}(\Lambda) = 0$, then $\lvert\alpha^*(\xi(\Lambda,x))\rvert\leq \lvert p^{\mu+2}\xi(\Lambda,x)\rvert$.
\end{corollary}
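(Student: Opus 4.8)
The plan is to re-examine the estimates in the proof of Proposition 5.4 and to observe that the hypothesis $\xi_{\bf b}(\Lambda)=0$ removes precisely the one term responsible for the exponent $\mu+1$. Recall from (5.3) that for $\rho\in M_-$,
\[ \eta_{\rho}(\Lambda) = \sum_{\substack{u\in{\mathbb N}A,\,\nu\in M_-\\ u+p\nu=\rho}}\gamma_0^{-\rho_{n+1}+\nu_{n+1}}\theta_u(\Lambda)\xi_{\nu}(\Lambda^p), \]
and that the proof of Proposition 5.4 shows that every coefficient of $\gamma_0^{-\rho_{n+1}+\nu_{n+1}}\theta_u(\Lambda)$ has $p$-ordinal at least $-\nu_{n+1}$ once one uses the constraint $u+p\nu=\rho$. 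By the remark immediately following Proposition 5.4, the equality $-\nu_{n+1}=\mu+1$ holds for only one $\nu\in M_-$, namely $\nu={\bf b}$, and $-\nu_{n+1}\geq\mu+2$ for every other $\nu\in M_-$.

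First I would note that if $\xi_{\bf b}(\Lambda)=0$, then the term with $\nu={\bf b}$ contributes nothing to the sum defining $\eta_\rho(\Lambda)$, so all surviving terms have $-\nu_{n+1}\geq\mu+2$. Then I would run the same estimate as in Proposition 5.4 on those surviving terms: each satisfies $\lvert\gamma_0^{-\rho_{n+1}+\nu_{n+1}}\theta_u(\Lambda)\rvert\leq\lvert p^{\mu+2}\rvert$, while $\lvert\xi_\nu(\Lambda^p)\rvert\leq\lvert\xi_\nu\rvert\leq\lvert\xi(\Lambda,x)\rvert$ for $\Lambda\in{\mathcal D}$ (using that ${\mathcal D}$ is stable under $\Lambda\mapsto\Lambda^p$). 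Taking suprema over $\Lambda\in{\mathcal D}$ and over the terms of the sum yields $\lvert\eta_\rho(\Lambda)\rvert\leq\lvert p^{\mu+2}\xi(\Lambda,x)\rvert$ for every $\rho\in M_-$, which by (5.2) is exactly the assertion $\lvert\alpha^*(\xi(\Lambda,x))\rvert\leq\lvert p^{\mu+2}\xi(\Lambda,x)\rvert$.

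There is no substantive obstacle here: the statement is pure bookkeeping built on the proof of Proposition 5.4, the only point being that the hypothesis $\xi_{\bf b}=0$ kills the unique exceptional term. Convergence of the series for $\eta_\rho(\Lambda)$ was already established in the discussion preceding Proposition 5.4 and is unaffected by dropping a single term, so nothing new is needed there.
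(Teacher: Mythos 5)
Your argument is correct and is exactly the intended proof: the paper itself states the corollary as ``an immediate consequence of the proof of Proposition 5.4,'' having just observed that $-\nu_{n+1}=\mu+1$ forces $\nu={\bf b}$. Dropping that one term via $\xi_{\bf b}=0$ and rerunning the same $p$-ordinal estimate $-\nu_{n+1}\geq\mu+2$ on the remaining terms is precisely what the authors have in mind.
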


We examine the polynomial $\theta_{-(p-1){\bf b}}(\Lambda)$ to determine its relation to~$H(\Lambda)$.  Let
\[ V = \{ v=(v_1,\dots,v_N)\in{\mathbb N}^N\mid \sum_{j=1}^N v_j{\bf a}^+_j = -(p-1){\bf b}\}. \]
From (3.21) and (3.22) we have
\[ \theta_{-(p-1){\bf b}}(\Lambda) = \sum_{v\in V} \bigg(\prod_{j=1}^N \theta_{v_j}\bigg)\Lambda_1^{v_1}\cdots \Lambda_N^{v_N}. \]
Clearly $v_j\leq p-1$ for all $j$, so $\theta_{v_j} = \gamma_0^{v_j}/v_j!$.  Furthermore, $\sum_{j=1}^N v_j = (p-1)(\mu+1)$, so this formula can be written
\[ \theta_{-(p-1){\bf b}}(\Lambda) = \gamma_0^{(p-1)(\mu+1)}\sum_{v\in V} \frac{\Lambda_1^{v_1}\cdots \Lambda_N^{v_N}}{v_1!\cdots v_N!}. \]

\begin{lemma}
We have the congruence
\[ (-p)^{\mu+1}H(\Lambda)\equiv \theta_{-(p-1){\bf b}}(\Lambda)\pmod{p^{\mu+2}}. \]
\end{lemma}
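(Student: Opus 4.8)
The plan is to exploit the explicit expression for $\theta_{-(p-1){\bf b}}(\Lambda)$ obtained just above the statement, namely
\[ \theta_{-(p-1){\bf b}}(\Lambda) = \gamma_0^{(p-1)(\mu+1)}\sum_{v\in V}\frac{\Lambda_1^{v_1}\cdots\Lambda_N^{v_N}}{v_1!\cdots v_N!}, \]
and to compare its index set with that of $H(\Lambda)$ in~(1.6). Since ${\bf b} = (-1,\dots,-1,-\mu-1)$, we have $-(p-1){\bf b} = (p-1)(1,\dots,1,\mu+1)$, so the conditions defining $V$ --- that $v\in{\mathbb N}^N$ with $\sum_j v_j{\bf a}_j^+ = -(p-1){\bf b}$ --- are word for word the summation conditions in the definition~(1.6) of $H(\Lambda)$. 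Therefore the displayed formula reads
\[ \theta_{-(p-1){\bf b}}(\Lambda) = \gamma_0^{(p-1)(\mu+1)}H(\Lambda), \]
an exact identity of polynomials.

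It then remains only to compare the scalars $\gamma_0^{(p-1)(\mu+1)}$ and $(-p)^{\mu+1}$ modulo $p^{\mu+2}$: once we know $\gamma_0^{(p-1)(\mu+1)}\equiv(-p)^{\mu+1}\pmod{p^{\mu+2}}$ we may multiply through by $H(\Lambda)$, which is harmless because $H(\Lambda)$ has $p$-integral coefficients (each $v_j\leq p-1$, so each $v_j!$ is a $p$-adic unit). For the scalar congruence I would invoke~(3.25): write $\gamma_0^{p-1} = -p(1-pc)$ with ${\rm ord}_p\,c\geq 0$, raise to the $(\mu+1)$-st power, and expand binomially, obtaining $(1-pc)^{\mu+1} = 1+p\eta$ with ${\rm ord}_p\,\eta\geq 0$ since every term beyond the leading $1$ carries at least one factor of $p$. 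Hence $\gamma_0^{(p-1)(\mu+1)} = (-p)^{\mu+1}(1+p\eta) = (-p)^{\mu+1}+(-1)^{\mu+1}p^{\mu+2}\eta$, which is exactly the asserted congruence.

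I do not anticipate any real obstacle; the lemma is essentially a repackaging of the already-established factorization $\theta_{-(p-1){\bf b}}(\Lambda) = \gamma_0^{(p-1)(\mu+1)}H(\Lambda)$ together with the elementary estimate~(3.25). The one spot worth a careful glance is the power-of-$p$ count in the binomial step: one must see that $(-p)^{\mu+1}\cdot p\eta$ has $p$-ordinal at least $\mu+2$ (immediate from ${\rm ord}_p\,\eta\geq 0$), so that the result really is a congruence modulo $p^{\mu+2}$ and not merely modulo $p^2$.
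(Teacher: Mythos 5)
Your proof is correct and follows essentially the same route as the paper's. The paper gives a one-line proof: the exact identity $\theta_{-(p-1){\bf b}}(\Lambda) = \gamma_0^{(p-1)(\mu+1)}H(\Lambda)$ (which you correctly derive from the formula displayed just above the lemma, noting that ${\bf a}_j$ and ${\bf a}_j^+$ give the same summation condition since the last coordinate of $-(p-1){\bf b}$ is $(p-1)(\mu+1)$) together with the scalar congruence $\gamma_0^{(p-1)(\mu+1)}\equiv(-p)^{\mu+1}\pmod{p^{\mu+2}}$ derived from~(3.24). Your binomial-expansion justification of that scalar congruence is the omitted "routine" step; the only flaw is a trivial one, namely that you cite~(3.25) where the paper's relevant congruence is~(3.24).
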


\begin{proof}
This follows immediately from the definitions and congruence (3.24), which implies that $\gamma_0^{(p-1)(\mu+1)}\equiv (-p)^{\mu+1}\pmod{p^{\mu+2}}$.  
\end{proof}

\begin{corollary}
The Laurent polynomial $(\Lambda_1\cdots\Lambda_{\mu+1})^{-(p-1)}\theta_{-(p-1){\bf b}}(\Lambda)$ is an invertible element of $R_0'$ with
\[ \lvert(\Lambda_1\cdots\Lambda_{\mu+1})^{-(p-1)}\theta_{-(p-1){\bf b}}(\Lambda)\rvert = \lvert p^{\mu+1}\rvert. \]
\end{corollary}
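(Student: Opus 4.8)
The plan is to reduce everything to properties already recorded in the Introduction for the Laurent polynomial $\xi(\Lambda):=(\Lambda_1\cdots\Lambda_{\mu+1})^{-(p-1)}H(\Lambda)$. The index set $V$ in the formula for $\theta_{-(p-1){\bf b}}(\Lambda)$ displayed just above the preceding lemma is precisely the index set of $H$, since $-(p-1){\bf b}=(p-1)(1,\dots,1,\mu+1)$. Hence that formula reads
\[ (\Lambda_1\cdots\Lambda_{\mu+1})^{-(p-1)}\theta_{-(p-1){\bf b}}(\Lambda) = \gamma_0^{(p-1)(\mu+1)}\,\xi(\Lambda). \]
So the Laurent polynomial in the statement is a nonzero scalar times $\xi(\Lambda)$, and it suffices to show that $\xi(\Lambda)$ is a unit of $R_0'$ with $|\xi|=1$ and that $|\gamma_0^{(p-1)(\mu+1)}|=|p^{\mu+1}|$.

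For the scalar: since ${\rm ord}_p\,\gamma_0=1/(p-1)$ we get ${\rm ord}_p\,\gamma_0^{(p-1)(\mu+1)}=\mu+1$, i.e. $|\gamma_0^{(p-1)(\mu+1)}|=|p^{\mu+1}|$, and as a nonzero constant it lies in $R_0'$ and is invertible there. For $\xi(\Lambda)$, membership in $R_0$ is exactly the bookkeeping noted in the Introduction: $\xi$ has only nonpositive powers of $\Lambda_1,\dots,\Lambda_{\mu+1}$, only nonnegative powers of $\Lambda_{\mu+2},\dots,\Lambda_N$, and is homogeneous of degree $0$, so its exponents lie in $L'=L\cap E$; it has finitely many terms, hence bounded coefficients; and it assumes unit values on ${\mathcal D}$, so $|\xi|=1=|c_0|$ with $c_0=\big((p-1)!\big)^{-(\mu+1)}$. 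By the invertibility criterion of Section~2, $\xi$ is therefore invertible in $R_0$.

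To upgrade this from $R_0$ to $R_0'$ I would use the definition of ${\mathcal D}_+$ itself: by construction $|\xi(\Lambda)|=1$ for every $\Lambda\in{\mathcal D}_+$. In particular $\xi$ is a rational function defined on all of ${\mathcal D}_+$ (its only poles lie along $\Lambda_j=0$ for $1\le j\le\mu+1$, which ${\mathcal D}_+$ avoids since $|\Lambda_j|\ge 1$ there) and is nowhere zero on ${\mathcal D}_+$; hence $\xi^{-1}=(\Lambda_1\cdots\Lambda_{\mu+1})^{p-1}/H(\Lambda)$ is again a rational function defined on ${\mathcal D}_+$ and homogeneous of degree $0$. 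Thus both $\xi$ and $\xi^{-1}$ belong to $R_0'$, so $\xi$ is a unit of $R_0'$ with $|\xi|=|\xi^{-1}|=1$ (computing norms over ${\mathcal D}_+$). Multiplying, $\gamma_0^{(p-1)(\mu+1)}\xi$ is a product of two units of $R_0'$, hence a unit of $R_0'$, and its norm is $|\gamma_0^{(p-1)(\mu+1)}|\,|\xi|=|p^{\mu+1}|$, as claimed.

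The one delicate point — the "hard part," such as it is — is precisely this last upgrade from $R_0$ to $R_0'$: one must read off from the very definition of ${\mathcal D}_+$ that $\xi$ (equivalently $H$) is nonvanishing on ${\mathcal D}_+$, so that $\xi^{-1}$ is a genuine rational function on ${\mathcal D}_+$ rather than merely an abstract element of $R_0$. Everything else is multiplicativity of the norm under multiplication by a nonzero scalar, together with the fact that a product of units is a unit. If one prefers to route through the preceding lemma rather than the exact identity, one writes instead $(\Lambda_1\cdots\Lambda_{\mu+1})^{-(p-1)}\theta_{-(p-1){\bf b}}(\Lambda)=(-p)^{\mu+1}\big(\xi(\Lambda)+\varepsilon(\Lambda)\big)$ with $|\varepsilon|\le|p|$ coming from the congruence mod $p^{\mu+2}$, factors out $\xi$, and inverts $1+\xi^{-1}\varepsilon$ by a geometric series, which converges since $|\xi^{-1}\varepsilon|\le|p|<1$ on ${\mathcal D}_+$; the conclusion is identical.
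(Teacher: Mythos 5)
Your proof is correct and follows essentially the same route as the paper; the one streamlining is that you use the exact identity $\theta_{-(p-1)\mathbf{b}}(\Lambda)=\gamma_0^{(p-1)(\mu+1)}H(\Lambda)$ (which the paper displays just before Lemma~5.7 but then replaces by the weaker congruence mod $p^{\mu+2}$) to avoid the perturbation step. The key input is identical in both: by the very definition of $\mathcal{D}_+$, the rational function $(\Lambda_1\cdots\Lambda_{\mu+1})^{-(p-1)}H(\Lambda)$ is nonvanishing of norm $1$ there, so both it and its reciprocal lie in $R_0'$, and the scalar $\gamma_0^{(p-1)(\mu+1)}$ contributes exactly $\lvert p^{\mu+1}\rvert$.
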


\begin{proof}
It is an invertible element of $R_0'$ by Lemma 5.7.  The assertion about the norm follows from the fact that all coefficients are divisible by $\gamma_0^{(p-1)(\mu+1)}$ and the constant term equals $(\gamma_0^{p-1}/(p-1)!)^{\mu+1}$.  
\end{proof}

We observed earlier that an element 
\[ \xi_0(\Lambda) = \sum_{\nu\in L'} c_\nu\Lambda^\nu\in R_0 \]
is invertible in the ring $R_0$ if and only if $|c_0|\geq |c_\nu|$ for all $\nu$.  Note that if $\xi_{\bf b}(\Lambda)\in R_{\bf b}$ then $\Lambda_1\cdots\Lambda_{\mu+1}\xi_{\bf b}(\Lambda)\in R_0$.  

Let $\xi(\Lambda,x)\in S$ and let $\eta(\Lambda,x) = \alpha^*\big(\xi(\Lambda,x)\big)$.  Then $\eta(\Lambda,x)$ is given by the right-hand side of (5.2), and by (5.3) we have
\begin{align}
 \eta_{\bf b}(\Lambda) &= \sum_{\substack{u\in{\mathbb N}A,\,\nu\in M_-\\ u+p\nu ={\bf b}}} \gamma_0^{\mu+1+\nu_{n+1}}\theta_u(\Lambda)\xi_{\nu}(\Lambda^p) \nonumber \\
&= \theta_{-(p-1){\bf b}}(\Lambda)\xi_{\bf b}(\Lambda^p) + \sum_{\substack{u\in{\mathbb N}A,\, \nu\in M_-\\ u+p\nu ={\bf b}\\ -\nu_{n+1}\geq \mu+2}}\gamma_0^{\mu+1+\nu_{n+1}}\theta_u(\Lambda)\xi_{\nu}(\Lambda^p).
\end{align}

\begin{lemma}
Suppose that $(\Lambda_1\cdots\Lambda_{\mu+1})\xi_{\bf b}(\Lambda)$ is an invertible element of $R_0$ (resp.~$R_0'$) and $\lvert\xi_{\bf b}(\Lambda)\rvert = \lvert\xi(\Lambda,x)\rvert$.  Then $(\Lambda_1\cdots\Lambda_{\mu+1})\eta_{\bf b}(\Lambda)$ is also an invertible element of~$R_0$ (resp.\ $R_0'$) and 
\[ \lvert\eta(\Lambda,x)\rvert = \lvert\eta_{\bf b}(\Lambda)\rvert = \lvert p^{\mu+1}\xi_{\bf b}(\Lambda)\rvert. \]
\end{lemma}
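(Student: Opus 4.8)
The plan is to analyze Equation~(5.9) term by term, showing that the first term dominates and all remaining terms are strictly smaller in norm, so that the invertibility and norm of $\eta_{\bf b}(\Lambda)$ are controlled by the first term alone. First I would examine $\theta_{-(p-1){\bf b}}(\Lambda)\xi_{\bf b}(\Lambda^p)$. By Corollary~5.8, the Laurent polynomial $(\Lambda_1\cdots\Lambda_{\mu+1})^{-(p-1)}\theta_{-(p-1){\bf b}}(\Lambda)$ is an invertible element of $R_0'$ (hence of $R_0$) with norm $\lvert p^{\mu+1}\rvert$. Meanwhile $\xi_{\bf b}(\Lambda^p)$ is obtained from $\xi_{\bf b}(\Lambda)$ by the substitution $\Lambda_j\mapsto\Lambda_j^p$, which sends $(\Lambda_1\cdots\Lambda_{\mu+1})\xi_{\bf b}(\Lambda)$, an invertible element of $R_0$ (resp.\ $R_0'$) by hypothesis, to $(\Lambda_1^p\cdots\Lambda_{\mu+1}^p)\xi_{\bf b}(\Lambda^p)$, which is again invertible in the appropriate ring with the same norm (the substitution $\Lambda_j\mapsto\Lambda_j^p$ preserves ${\mathcal D}$ and ${\mathcal D}_+$ and preserves norms, and invertibility in $R_0$ is detected by the leading coefficient). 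Multiplying, $(\Lambda_1\cdots\Lambda_{\mu+1})^p\theta_{-(p-1){\bf b}}(\Lambda)\xi_{\bf b}(\Lambda^p)$ lies in $R_{p(\mu+1){\bf b}'}$-type degree; more to the point, after dividing by the appropriate power of $\Lambda_1\cdots\Lambda_{\mu+1}$ one checks that $(\Lambda_1\cdots\Lambda_{\mu+1})\theta_{-(p-1){\bf b}}(\Lambda)\xi_{\bf b}(\Lambda^p)$ is an invertible element of $R_0$ (resp.\ $R_0'$) of norm $\lvert p^{\mu+1}\rvert\cdot\lvert\xi_{\bf b}(\Lambda)\rvert$. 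The key bookkeeping point is that $\theta_{-(p-1){\bf b}}(\Lambda)$ has only nonpositive powers of $\Lambda_j$ for $j\le\mu+1$ and nonnegative powers for $j\ge\mu+2$ (since $V\subseteq{\mathbb N}^N$), so the product does land in the right module and the invertibility criterion via the constant (leading) term applies.

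Next I would bound the remaining sum in~(5.9), indexed by pairs $(u,\nu)$ with $u+p\nu={\bf b}$ and $-\nu_{n+1}\ge\mu+2$. The factor $\gamma_0^{\mu+1+\nu_{n+1}}\theta_u(\Lambda)$ has all coefficients of $p$-ordinal at least $(\mu+1+\nu_{n+1})/(p-1) + u_{n+1}/(p-1)$ by~(3.24) applied to $\gamma_0$ and~(3.23) applied to $\theta_u$; using $u_{n+1}+p\nu_{n+1} = {\bf b}_{n+1} = -(\mu+1)$, i.e.\ $u_{n+1} = -(\mu+1) - p\nu_{n+1}$, this ordinal is at least $(\mu+1+\nu_{n+1} - (\mu+1) - p\nu_{n+1})/(p-1) = -\nu_{n+1}$. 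Wait---more carefully, $\gamma_0^{\mu+1+\nu_{n+1}}$ contributes ordinal $(\mu+1+\nu_{n+1})/(p-1)$ (note $\mu+1+\nu_{n+1}\le -1 < 0$ here, so this is the $\gamma_0$-power being negative; one keeps track of signs via the explicit shape $\gamma_0^{-\rho_{n+1}+\nu_{n+1}}\theta_u$ from~(5.3) with $\rho={\bf b}$, exactly as in the proof of Proposition~5.4, where the net ordinal was shown to be $\ge -\nu_{n+1}\ge\mu+2$). So each such term, after multiplication by $\Lambda_1\cdots\Lambda_{\mu+1}$ to land in $R_0$, has norm at most $\lvert p^{\mu+2}\rvert\cdot\lvert\xi(\Lambda,x)\rvert = \lvert p^{\mu+2}\rvert\cdot\lvert\xi_{\bf b}(\Lambda)\rvert$, strictly smaller than $\lvert p^{\mu+1}\xi_{\bf b}(\Lambda)\rvert$.

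Combining, $(\Lambda_1\cdots\Lambda_{\mu+1})\eta_{\bf b}(\Lambda)$ is the sum of an invertible element of $R_0$ (resp.\ $R_0'$) of norm $\lvert p^{\mu+1}\xi_{\bf b}(\Lambda)\rvert$ and an element of strictly smaller norm; since invertibility in $R_0$ is an open condition detected by comparing the leading coefficient to the norm, the sum remains invertible with the same norm $\lvert p^{\mu+1}\xi_{\bf b}(\Lambda)\rvert$. This gives $\lvert\eta_{\bf b}(\Lambda)\rvert = \lvert p^{\mu+1}\xi_{\bf b}(\Lambda)\rvert$. Finally, the equality $\lvert\eta(\Lambda,x)\rvert = \lvert\eta_{\bf b}(\Lambda)\rvert$ follows because Proposition~5.4 gives $\lvert\eta(\Lambda,x)\rvert\le\lvert p^{\mu+1}\xi(\Lambda,x)\rvert = \lvert p^{\mu+1}\xi_{\bf b}(\Lambda)\rvert = \lvert\eta_{\bf b}(\Lambda)\rvert$, while trivially $\lvert\eta(\Lambda,x)\rvert\ge\lvert\eta_{\bf b}(\Lambda)\rvert$ from the definition of the norm on $S$. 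I expect the main obstacle to be the careful sign and ordinal bookkeeping around the $\gamma_0$-powers in~(5.3) and verifying that the substitution $\Lambda_j\mapsto\Lambda_j^p$ genuinely preserves the invertibility and norm within $R_0'$ (not just $R_0$), which requires checking it maps rational functions defined on ${\mathcal D}_+$ to rational functions defined on ${\mathcal D}_+$ and respects uniform limits; everything else is a direct application of Corollary~5.8 and the estimate from Proposition~5.4.
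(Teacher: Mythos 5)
Your proposal is correct and follows essentially the same route as the paper: rewrite the leading term of (5.9) as a product of two invertible factors (via Corollary~5.8), bound the tail using (3.21), (3.23), and $u+p\nu={\bf b}$ to get ordinal $\geq -\nu_{n+1}\geq\mu+2$, and finish with Proposition~5.4. One small slip: it is $(\Lambda_1\cdots\Lambda_{\mu+1})^{-(p-1)}\theta_{-(p-1){\bf b}}(\Lambda)$, not $\theta_{-(p-1){\bf b}}(\Lambda)$ itself (which, having $V\subseteq{\mathbb N}^N$, is an ordinary polynomial with nonnegative exponents), that has nonpositive powers of $\Lambda_j$ for $j\leq\mu+1$; and the ordinal of $\gamma_0^{\mu+1+\nu_{n+1}}$ comes from ${\rm ord}_p\:\gamma_0=1/(p-1)$ rather than from (3.24).
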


\begin{proof}
First note that
\begin{multline*}
\bigg(\prod_{j=1}^{\mu+1}\Lambda_j\bigg)\theta_{-(p-1){\bf b}}(\Lambda)\xi_{\bf b}(\Lambda^p) = \\
\bigg(\bigg(\prod_{j=1}^{\mu+1}\Lambda_j\bigg)^{-(p-1)}\theta_{-(p-1){\bf b}}(\Lambda)\bigg)\cdot\bigg( \bigg(\prod_{j=1}^{\mu+1}\Lambda_j\bigg)^p\xi_{\bf b}(\Lambda^p)\bigg), 
\end{multline*}
where the right-hand side is a product of two invertible elements by Corollary 5.8.  Also by Corollary 5.8, it has norm
\begin{equation}
\lvert p^{\mu+1}\xi_{\bf b}(\Lambda)\rvert = \lvert p^{\mu+1}\xi(\Lambda,x)\rvert. 
\end{equation}
From (3.21), (3.23), and the condition $u+p\nu = {\bf b}$ it follows that all terms in the summation on the last line of (5.9) have $p$-ordinal greater than or equal to
\begin{equation}
 \frac{\mu+1+\nu_{n+1}}{p-1} +\frac{-p\nu_{n+1}-\mu-1}{p-1} = -\nu_{n+1}\geq \mu+2. 
\end{equation}
Estimates (5.11), (5.12), and Corollary 5.8, combined with Equation (5.9), show that the function $(\prod_{j=1}^{\mu+1}\Lambda_j)\eta_{\bf b}(\Lambda)$ is invertible and that 
\[ \lvert\eta_{\bf b}(\Lambda)\rvert = \lvert p^{\mu+1}\xi_{\bf b}(\Lambda)\rvert = \lvert p^{\mu+1}\xi(\Lambda,x)\rvert. \]
Using Proposition 5.4, we then have
\[ \lvert p^{\mu+1}\xi(\Lambda,x)\rvert \geq \lvert \eta(\Lambda,x)\rvert = \sup_u \{\lvert\eta_u(\Lambda)\rvert\} \geq \lvert\eta_{\bf b}(\Lambda)\rvert  = \lvert p^{\mu+1}\xi(\Lambda,x)\rvert. \]
In particular, we get $|\eta(\Lambda,x)| = |\eta_{\bf b}(\Lambda)|$.
\end{proof}

Put
\[ T = \{\xi(\Lambda,x)\in S\mid \text{$\xi_{\bf b}(\Lambda)=(\Lambda_1\cdots\Lambda_{\mu+1})^{-1}$ and $\lvert\xi(\Lambda,x)\rvert = 1$}\} \]
and put $T' = T\cap S'$.  It follows from Lemma 5.10 that if $\xi(\Lambda,x)\in T$, then $\Lambda_1\cdots\Lambda_{\mu+1}\eta_{\bf b}(\Lambda)$ is invertible.  We may thus define for $\xi(\Lambda,x)\in T$
\[ \phi\big(\xi(\Lambda,x)\big) = \frac{\alpha^*\big(\xi(\Lambda,x)\big)}{\Lambda_1\cdots\Lambda_{\mu+1}\eta_{\bf b}(\Lambda)}. \]
Lemma 5.10 also implies that
\[ \bigg\lvert \frac{\alpha^*\big(\xi(\Lambda,x)\big)}{\Lambda_1\cdots\Lambda_{\mu+1}\eta_{\bf b}(\Lambda)}\bigg\rvert = 1, \]
so $\phi(T)\subseteq T$ and $\phi(T')\subseteq T'$.  

\begin{proposition}
The operator $\phi$ is a contraction mapping on the complete metric space $T$.  More precisely, if $\xi^{(1)}(\Lambda,x), \xi^{(2)}(\Lambda,x)\in T$, then
\[ \big\lvert\phi\big(\xi^{(1)}(\Lambda,x)\big) - \phi\big(\xi^{(2)}(\Lambda,x)\big)\big\rvert\leq \lvert p\rvert\cdot\big\lvert \xi^{(1)}(\Lambda,x)-\xi^{(2)}(\Lambda,x)\big\rvert. \]
\end{proposition}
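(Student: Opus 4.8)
The plan is to reduce the estimate on $\phi(\xi^{(1)}) - \phi(\xi^{(2)})$ to a linearity-plus-better-bound statement about $\alpha^*$. First I would write the difference of the two values of $\phi$ over a common denominator. Setting $\eta^{(m)}(\Lambda,x) = \alpha^*(\xi^{(m)}(\Lambda,x))$ for $m=1,2$ and abbreviating $\Pi = \Lambda_1\cdots\Lambda_{\mu+1}$, we have
\[
\phi(\xi^{(1)}) - \phi(\xi^{(2)}) = \frac{\eta^{(1)}}{\Pi\,\eta^{(1)}_{\bf b}} - \frac{\eta^{(2)}}{\Pi\,\eta^{(2)}_{\bf b}} = \frac{\eta^{(1)}_{\bf b}\,\eta^{(2)} - \eta^{(2)}_{\bf b}\,\eta^{(1)}}{\Pi\,\eta^{(1)}_{\bf b}\,\eta^{(2)}_{\bf b}}.
\]
By Lemma 5.10, each $\Pi\,\eta^{(m)}_{\bf b}$ is invertible of norm $|p^{\mu+1}|$, so the denominator $\Pi\,\eta^{(1)}_{\bf b}\,\eta^{(2)}_{\bf b}$ is invertible of norm $|p^{2(\mu+1)}| / |\Pi|$-ish — more precisely the relevant fact is that dividing by it multiplies norms by $|p^{-2(\mu+1)}|$. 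So it suffices to show the numerator has norm at most $|p^{2\mu+3}| \cdot |\xi^{(1)} - \xi^{(2)}|$.

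Next I would manipulate the numerator. Write $\eta^{(1)}_{\bf b}\eta^{(2)} - \eta^{(2)}_{\bf b}\eta^{(1)} = \eta^{(1)}_{\bf b}(\eta^{(2)} - \eta^{(1)}) + (\eta^{(1)}_{\bf b} - \eta^{(2)}_{\bf b})\eta^{(1)}$. Now both $\xi^{(1)}$ and $\xi^{(2)}$ lie in $T$, so their ${\bf b}$-components agree (both equal $\Pi^{-1}$); hence $\xi^{(1)} - \xi^{(2)}$ has vanishing ${\bf b}$-component. Since $\alpha^*$ is ${\mathbb C}_p$-linear, $\eta^{(1)} - \eta^{(2)} = \alpha^*(\xi^{(1)} - \xi^{(2)})$, and by Corollary 5.5 (the improved bound when the ${\bf b}$-component vanishes) we get $|\eta^{(1)} - \eta^{(2)}| \leq |p^{\mu+2}| \cdot |\xi^{(1)} - \xi^{(2)}|$. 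Taking the ${\bf b}$-component of this same identity shows $|\eta^{(1)}_{\bf b} - \eta^{(2)}_{\bf b}| \leq |p^{\mu+2}| \cdot |\xi^{(1)} - \xi^{(2)}|$ as well. Combined with $|\eta^{(1)}_{\bf b}| = |p^{\mu+1}|$ (Lemma 5.10, since $\xi^{(1)}\in T$ forces $|\xi^{(1)}| = 1$) and $|\eta^{(1)}| = |p^{\mu+1}|$ (same lemma), both terms in the split numerator are bounded by $|p^{\mu+1}| \cdot |p^{\mu+2}| \cdot |\xi^{(1)} - \xi^{(2)}| = |p^{2\mu+3}| \cdot |\xi^{(1)} - \xi^{(2)}|$. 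Dividing by the denominator, whose norm-effect is $|p^{-2(\mu+1)}|$, yields the bound $|p^{2\mu+3-2\mu-2}| \cdot |\xi^{(1)} - \xi^{(2)}| = |p| \cdot |\xi^{(1)} - \xi^{(2)}|$, as claimed. That $T$ is a complete metric space follows because it is a closed subset (defined by an equality on one coefficient and a norm-one condition) of the complete space $S$.

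The main obstacle is bookkeeping the exact power of $p$: one has to be careful that Corollary 5.5 really does apply to $\xi^{(1)} - \xi^{(2)}$ (it does, since its ${\bf b}$-component is zero, even though $\xi^{(1)} - \xi^{(2)}$ itself need not lie in $T$ — Corollary 5.5 only needs $\xi_{\bf b} = 0$), and that the denominator estimate uses invertibility, not merely a norm bound, so that division is exact and norms behave multiplicatively. The linearity of $\alpha^*$ is immediate from its definition via $\delta_-$ and multiplication by the fixed series $\theta(\Lambda,x)$, so no difficulty arises there. I would also remark that the same argument works verbatim on $T'$, since $\alpha^*$ preserves $S'$ and the invertibility statements in Lemma 5.10 and Corollary 5.8 hold in $R_0'$.
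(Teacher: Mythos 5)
Your proof is correct and follows essentially the paper's own argument: the same key observation (that $\xi^{(1)} - \xi^{(2)}$ has vanishing ${\bf b}$-component, so Corollary~5.6 supplies the extra factor of $p$) combined with the invertibility and norm facts from Lemma~5.10, applied to a numerator decomposition that is an algebraic variant of the paper's telescoping split. A few small slips worth fixing: the corollary you want is numbered 5.6, not 5.5; your displayed numerator is $-(\eta^{(1)}\eta^{(2)}_{\bf b} - \eta^{(2)}\eta^{(1)}_{\bf b})$, i.e.\ off by a sign (harmless for norms); and the ``norm-effect'' of dividing by $\Lambda_1\cdots\Lambda_{\mu+1}\,\eta^{(1)}_{\bf b}\eta^{(2)}_{\bf b}$ is most cleanly made precise by multiplying top and bottom by $\Lambda_1\cdots\Lambda_{\mu+1}$ so that the denominator becomes the invertible $R_0$-element $(\Lambda_1\cdots\Lambda_{\mu+1}\eta^{(1)}_{\bf b})(\Lambda_1\cdots\Lambda_{\mu+1}\eta^{(2)}_{\bf b})$ of norm exactly $\lvert p^{2\mu+2}\rvert$.
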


\begin{proof}
We have (in the obvious notation)
\begin{multline*}
\phi\big(\xi^{(1)}(\Lambda,x)\big)-\phi\big(\xi^{(2)}(\Lambda,x)\big) = \frac{\alpha^*\big(
\xi^{(1)}(\Lambda,x)\big)}{\Lambda_1\cdots\Lambda_{\mu+1}\eta^{(1)}_{\bf b}(\Lambda)} - \frac{\alpha^*\big(\xi^{(2)}(\Lambda,x)
\big)}{\Lambda_1\cdots\Lambda_{\mu+1}\eta^{(2)}_{\bf b}(\Lambda)} \\
 = \frac{\alpha^*\big(\xi^{(1)}(\Lambda,x)-\xi^{(2)}(\Lambda,x)\big)}{\Lambda_1\cdots\Lambda_{\mu+1}\eta^{(1)}_{\bf b}(\Lambda)}
  - \alpha^*\big(\xi^{(2)}(\Lambda,x)\big)\frac{\eta^{(1)}_{\bf b}(\Lambda) - 
\eta^{(2)}_{\bf b}(\Lambda)}{\Lambda_1\cdots\Lambda_{\mu+1}\eta^{(1)}_{\bf b}(\Lambda)\eta^{(2)}_{\bf b}(\Lambda)}.
\end{multline*}
By Corollary 5.6 and Lemma 5.10 we have
\[ \bigg\lvert\frac{\alpha^*\big(\xi^{(1)}(\Lambda,x)-\xi^{(2)}(\Lambda,x)\big)}{\Lambda_1\cdots\Lambda_{\mu+1}\eta^{(1)}_{\bf b}(\Lambda)}
\bigg\rvert\leq \lvert p\rvert\cdot
\big\lvert \xi^{(1)}(\Lambda,x)-\xi^{(2)}(\Lambda,x)\big\rvert. \]
Since $\eta^{(1)}_{\bf b}(\Lambda)-\eta^{(2)}_{\bf b}(\Lambda)$ is the coefficient of 
$x^{\bf b}$ in $\alpha^*\big(\xi^{(1)}(\Lambda,x)-\xi^{(2)}(\Lambda,x)\big)$, we have
\begin{align*} 
\lvert\eta^{(1)}_{\bf b}(\Lambda)-\eta^{(2)}_{\bf b}(\Lambda)\rvert & \leq 
\big\lvert\alpha^*\big(\xi^{(1)}(\Lambda,x)-\xi^{(2)}(\Lambda,x)\big)\big\rvert \\ 
& \leq  \lvert p^{\mu+2}\rvert\cdot\big\lvert \xi^{(1)}(\Lambda,x)-\xi^{(2)}(\Lambda,x)\big\rvert 
\end{align*}
by Corollary 5.6.  We have $\lvert\eta^{(1)}_{\bf b}(\Lambda)\eta^{(2)}_{\bf b}(\Lambda)\rvert=\lvert p^{2\mu+2}\rvert$ by Lemma~5.10, so by (5.5)
\[ \bigg\lvert \alpha^*\big(\xi^{(2)}(\Lambda,x)\big)\frac{\eta^{(1)}_{\bf b}(\Lambda) - 
\eta^{(2)}_{\bf b}(\Lambda)}{\Lambda_1\cdots\Lambda_{\mu+1}\eta^{(1)}_{\bf b}(\Lambda)\eta^{(2)}_{\bf b}(\Lambda)}\bigg\rvert
\leq \lvert p\rvert \cdot\big\lvert \xi^{(1)}(\lambda,x)-\xi^{(2)}(\lambda,x)\big\rvert. \]
This establishes the proposition.
\end{proof} 

By a well-known theorem, Proposition 5.13 implies that $\phi$ has a unique fixed point in $T$.  And since $\phi$ is stable on $T'$, that fixed point must lie in $T'$.  This fixed point of $\phi$ is related to a certain eigenvector of $\alpha^*$. 

\begin{theorem}
We have $\alpha^*\big(G(\Lambda,x)\big) = p^{\mu+1} G(\Lambda,x)$.
\end{theorem}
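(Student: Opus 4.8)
The plan is to compute $\alpha^*\bigl(G(\Lambda,x)\bigr)=\delta_-\bigl(\theta(\Lambda,x)\,G(\Lambda^p,x^p)\bigr)$ directly from the product formula (4.13) for $G$ and to reorganize the answer into $p^{\mu+1}$ times that same product. First I would record the elementary absorption property of $\delta_-$: if $W$ is a formal power series in $x_0,\dots,x_{n+1}$ (only nonnegative exponents) and $Y$ is a formal series for which $WY$ makes sense, then $\delta_-(WY)=\delta_-\bigl(W\,\delta_-(Y)\bigr)$, since every monomial of $Y-\delta_-(Y)$ has some nonnegative exponent and $W$ preserves that. By (4.13), $G(\Lambda^p,x^p)=\delta_-\bigl(\prod_{j\le\mu+1}\zeta(\gamma_0\Lambda_j^px^{p{\bf a}_j^+})\hat\theta_1(\Lambda_j^px^{p{\bf a}_j^+})\cdot\prod_{j\ge\mu+2}\hat\theta(\Lambda_j^px^{p{\bf a}_j^+})\bigr)$; applying absorption with $W=\theta(\Lambda,x)=\prod_{j=1}^N\theta(\Lambda_jx^{{\bf a}_j^+})$, grouping the $j$-th factor of $\theta(\Lambda,x)$ with the $j$-th factor of this product, and collapsing the indices $j\ge\mu+2$ by the exact identity $\theta(t)\hat\theta(t^p)=\hat\theta(t)$, one obtains
\[
\alpha^*(G)=\delta_-\Bigl(\,\prod_{j=1}^{\mu+1}\bigl[\theta(\Lambda_jx^{{\bf a}_j^+})\,\hat\theta_1(\Lambda_j^px^{p{\bf a}_j^+})\,\zeta(\gamma_0\Lambda_j^px^{p{\bf a}_j^+})\bigr]\cdot\prod_{j=\mu+2}^N\hat\theta(\Lambda_jx^{{\bf a}_j^+})\Bigr).
\]
That each bracketed factor, and the entire product, is well defined follows from the estimates (3.1), (3.8), (3.10) exactly as in the paragraph after (5.1).

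The hard part will be the single-variable identity: in the ring of formal Laurent series in $t$,
\[
\theta(t)\,\hat\theta_1(t^p)\,\zeta(\gamma_0 t^p)\;=\;p\,\zeta(\gamma_0 t)\,\hat\theta_1(t)\;+\;E(t),\qquad E(t)\in{\mathbb C}_p[[t]],
\]
i.e.\ the two sides have the same image under $\delta_-$. I would attack it by writing $\theta(t)=\hat\theta(t)/\hat\theta(t^p)$ and $\hat\theta=\exp(\gamma_0 t)\hat\theta_1$ to rewrite the left-hand side, and then comparing the coefficients of $t^{-m}$, $m\ge 1$, term by term using the defining series of $\zeta$, $\theta$, $\hat\theta_1$ and the valuation estimates of Section~3. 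The factor $p$ is forced by the property of $\zeta$ noted after (4.1) — differentiating a term of $\zeta$ reproduces the next term — which makes the $p$-power Frobenius substitution in the argument of $\zeta$ cost exactly one factor of $p$ at each ``residue''; this is the dual-theory form of ``Frobenius acts by $p$,'' and it is here that working with Dwork's $\theta_\infty$-splitting $\hat\theta$ instead of $\theta_1$ is what makes the identity (hence the theorem) valid for every prime rather than only for large $p$.

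Granting this identity, substitute it with $t=\Lambda_jx^{{\bf a}_j^+}$ into the bracketed factors for $j\le\mu+1$ and expand the resulting finite product. The single term in which $p\,\zeta(\gamma_0\Lambda_jx^{{\bf a}_j^+})\hat\theta_1(\Lambda_jx^{{\bf a}_j^+})$ is chosen for every $j\le\mu+1$ equals $p^{\mu+1}\prod_{j\le\mu+1}\zeta(\gamma_0\Lambda_jx^{{\bf a}_j^+})\hat\theta_1(\Lambda_jx^{{\bf a}_j^+})\cdot\prod_{j\ge\mu+2}\hat\theta(\Lambda_jx^{{\bf a}_j^+})$, whose image under $\delta_-$ is $p^{\mu+1}G(\Lambda,x)$ by (4.13). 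Every remaining term contains a factor $E(\Lambda_{j_0}x^{{\bf a}_{j_0}^+})$ for some $j_0\le\mu+1$; pick an index $i$ in the (nonempty) support of ${\bf a}_{j_0}$. In that term $E(\Lambda_{j_0}x^{{\bf a}_{j_0}^+})$ contributes a nonnegative power of $x_i$; by hypothesis (1.9), $\sum_{j=1}^{\mu+1}{\bf a}_j=(1,\dots,1)$, so the supports of ${\bf a}_1,\dots,{\bf a}_{\mu+1}$ are pairwise disjoint and none of the other factors with index $\le\mu+1$ involves $x_i$, while the factors $\hat\theta$ (indices $\ge\mu+2$) and any other $E$'s contribute only nonnegative powers of $x_i$; hence every monomial of that term has nonnegative $x_i$-exponent and is killed by $\delta_-$. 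This leaves exactly the main term, so $\alpha^*(G)=p^{\mu+1}G$. (One also checks, using the same Section~3 estimates, that each of the finitely many terms in the expansion is itself well defined.)
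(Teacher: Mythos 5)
Your overall decomposition is sound and parallels the paper's: reduce everything to a one-variable eigenvalue identity, then use the disjoint-support consequence of hypothesis (1.9) to combine it across the $\mu+1$ indices. The absorption property of $\delta_-$ and the collapse $\theta(t)\hat\theta(t^p)=\hat\theta(t)$ for $j\geq\mu+2$ are correct; and the multi-index argument at the end (picking $i$ in the support of ${\bf a}_{j_0}$ and observing the term has no monomial with $x_i$-exponent $<0$) is valid and, if anything, a bit cleaner than the terse sketch after (6.17) in the paper.

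However, there is a genuine gap at the point you yourself flag as ``the hard part.''  Your single-variable identity
\[
\theta(t)\,\hat\theta_1(t^p)\,\zeta(\gamma_0 t^p)\equiv p\,\zeta(\gamma_0 t)\,\hat\theta_1(t)\pmod{\delta_-},
\]
after noting $\zeta(\gamma_0 t)=q(t)$ and that $Q(t)=\delta_-\bigl(\hat\theta_1(t)q(t)\bigr)$, is \emph{exactly} the statement $\alpha'(Q(t))=pQ(t)$ of Proposition~6.10, and you do not prove it.  ``Compare coefficients of $t^{-m}$ using the valuation estimates of Section~3'' is a plan, not an argument: the estimates (3.1), (3.8), (3.10) control $p$-adic size but give no reason for the exact identity to hold, and the remark that differentiating $\zeta$ reproduces the next term explains why some scalar should appear, not why it equals $p$ for every prime.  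In the paper this scalar is pinned down only after a chain of nontrivial steps: Lemma~6.1 (invertibility of $\delta_-\circ\hat\theta_1$ on $C$), Proposition~6.3/Corollary~6.7 (the kernel of $D'$ on $C$ is one-dimensional, spanned by $Q$), Proposition~6.9 ($D'\circ\alpha'=p\,\alpha'\circ D'$, so $\alpha'$ preserves that kernel and $\alpha'(Q)=cQ$), and then the computation of $c=p$ via the pairing $\langle\,,\,\rangle$ with $C^*_0$, the adjointness relations (6.14)--(6.15), the surjectivity $D\colon C^*\to C^*_0$ from \cite[Theorem~3.8]{AS0}, and the explicit verification that $\langle Q(t),D(1)\rangle\neq 0$.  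None of this is elementary coefficient-chasing; the dual-space argument is precisely what replaces it.  Until you supply a proof of the one-variable identity (equivalently, of $\alpha'(Q)=pQ$), the proposal does not establish the theorem.

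One further small point worth recording: you use $\zeta(\gamma_0 t)=q(t)$ implicitly and so your displayed expression for $G(\Lambda,x)$ agrees with the paper's Lemma~6.19 (since the disjoint supports make $\delta_-$ factor over the indices $j\leq\mu+1$, turning $\delta_-(\zeta\hat\theta_1)$ into $Q$ factorwise); making this identification explicit would connect your bracketed factors directly to Proposition~6.10 and highlight exactly what remains to be proved.
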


The proof of Theorem 5.14 will be given in the next section.  In the remainder of this section, we use Proposition 5.13 and Theorem 5.14 to prove that $G(\Lambda)/G(\Lambda^p)\in R_0'$.  This will establish the first sentence of Theorem~4.26.  Note that Lemma 4.21 implies that $G(\Lambda,x)/G(\Lambda)\in T$.  
 
\begin{proposition}
The unique fixed point of $\phi$ in $T$ is $G(\Lambda,x)/G(\Lambda)$, hence $G(\Lambda,x)/G(\Lambda)\in T'$.  In particular,
\[ \frac{G_u(\Lambda)}{G(\Lambda)}\in R_u'\quad\text{for all $u\in M_-$.} \]
\end{proposition}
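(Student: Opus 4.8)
The plan is to exhibit $G(\Lambda,x)/G(\Lambda)$ as \emph{the} fixed point of $\phi$ by a direct computation, and then read off the corollary about the coefficients. Write $\Xi(\Lambda,x) = G(\Lambda,x)/G(\Lambda)$. We already know that $\Xi\in T$ (the remark following the statement, a consequence of Lemma 4.21), and that $\phi$ has a unique fixed point in $T$, which moreover lies in $T'$ (Proposition 5.13 together with the stability $\phi(T')\subseteq T'$). Hence the entire content is the identity $\phi(\Xi)=\Xi$.

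First I would record the semilinearity of $\alpha^*$ over $R_0$: since $\delta_-$ truncates only in the variables $x$, the defining formula $\alpha^*(\xi)=\delta_-\big(\theta(\Lambda,x)\,\xi(\Lambda^p,x^p)\big)$ gives, for every $c(\Lambda)\in R_0$ and $\xi(\Lambda,x)\in S$ (note $S$ is a module over $R_0$),
\[ \alpha^*\big(c(\Lambda)\,\xi(\Lambda,x)\big) = c(\Lambda^p)\,\alpha^*\big(\xi(\Lambda,x)\big). \]
Taking $c=1/G(\Lambda)$ (a unit of $R_0$ by Lemma 4.21; likewise $1/G(\Lambda^p)$ is a unit, since $\Lambda\mapsto\Lambda^p$ preserves $\mathcal{D}$) and invoking Theorem 5.14 yields the equality in $S$
\[ \alpha^*(\Xi) = \frac{1}{G(\Lambda^p)}\,\alpha^*\big(G(\Lambda,x)\big) = \frac{p^{\mu+1}}{G(\Lambda^p)}\,G(\Lambda,x). \]
Comparing the expansions $\alpha^*(\Xi)=\sum_{\rho\in M_-}\eta_\rho(\Lambda)\gamma_0^{\rho_{n+1}}x^\rho$ and $G(\Lambda,x)=\sum_{u\in M_-}G_u(\Lambda)\gamma_0^{u_{n+1}}x^u$ term by term gives $\eta_\rho(\Lambda)=p^{\mu+1}G_\rho(\Lambda)/G(\Lambda^p)$ for all $\rho\in M_-$; in particular, using (4.20),
\[ \Lambda_1\cdots\Lambda_{\mu+1}\,\eta_{\bf b}(\Lambda) = \frac{p^{\mu+1}}{G(\Lambda^p)}\,\Lambda_1\cdots\Lambda_{\mu+1}\,G_{\bf b}(\Lambda) = \frac{p^{\mu+1}G(\Lambda)}{G(\Lambda^p)}, \]
which is invertible as Lemma 5.10 guarantees for any element of $T$. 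Dividing,
\[ \phi(\Xi) = \frac{\alpha^*(\Xi)}{\Lambda_1\cdots\Lambda_{\mu+1}\,\eta_{\bf b}(\Lambda)} = \frac{\big(p^{\mu+1}/G(\Lambda^p)\big)\,G(\Lambda,x)}{p^{\mu+1}G(\Lambda)/G(\Lambda^p)} = \frac{G(\Lambda,x)}{G(\Lambda)} = \Xi. \]
Thus $\Xi$ is the fixed point of $\phi$, so $\Xi\in T'\subseteq S'$; by the definition of $S'$, the coefficients $G_u(\Lambda)/G(\Lambda)$ of $\Xi$ lie in $R_u'$ for every $u\in M_-$, which is the final assertion.

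The argument is essentially formal once Theorem 5.14 is available (its proof, deferred to the next section, carries the real weight). The only steps that require a little care are the semilinearity identity for $\alpha^*$ over $R_0$ and the resulting term-by-term identification $\eta_{\bf b}=p^{\mu+1}G_{\bf b}/G(\Lambda^p)$, so that the normalizing factor $\Lambda_1\cdots\Lambda_{\mu+1}\,\eta_{\bf b}$ appearing in the definition of $\phi$ comes out to exactly $p^{\mu+1}G(\Lambda)/G(\Lambda^p)$ and cancels cleanly against $\alpha^*(\Xi)$.
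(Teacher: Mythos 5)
Your argument is correct and is essentially the paper's own proof: both invoke Theorem~5.14 together with the semilinearity $\alpha^*\big(c(\Lambda)\xi\big)=c(\Lambda^p)\alpha^*(\xi)$ applied with $c=1/G(\Lambda)$, then appeal to the definition of $\phi$; you merely spell out the cancellation $\Lambda_1\cdots\Lambda_{\mu+1}\eta_{\bf b}(\Lambda)=p^{\mu+1}G(\Lambda)/G(\Lambda^p)$ that the paper leaves implicit in the phrase ``by the definition of $\phi$.''
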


\begin{proof}
We have
\begin{align}
\alpha^*\bigg(\frac{G(\Lambda,x)}{G(\Lambda)}\biggr)  &= \frac{\alpha^*\big(G(\Lambda,x)\big)}{G(\Lambda^p)} \nonumber \\
&= \bigg(\frac{p^{\mu+1}G(\Lambda)}
{G(\Lambda^p)}\bigg) \frac{G(\Lambda,x)}{G(\Lambda)},
\end{align}
where the second equality follows from Theorem 5.14.  By the definition of $\phi$, this implies the result.
\end{proof}

\begin{corollary}
With the above notation, $G(\Lambda)/G(\Lambda^p)\in R_0'$.
\end{corollary}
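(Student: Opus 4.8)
The plan is to obtain the Corollary as a short consequence of the eigenvector identity (Theorem~5.14), the fixed-point description (Proposition~5.15), and Lemma~5.10, by feeding the single series $\xi(\Lambda,x):=G(\Lambda,x)/G(\Lambda)$ into Lemma~5.10. By Proposition~5.15, $\xi(\Lambda,x)\in T'\subseteq S'$; by~(4.20) its ${\bf b}$-component is $\xi_{\bf b}(\Lambda)=G_{\bf b}(\Lambda)/G(\Lambda)=(\Lambda_1\cdots\Lambda_{\mu+1})^{-1}$, so $(\Lambda_1\cdots\Lambda_{\mu+1})\xi_{\bf b}(\Lambda)=1$ is an invertible element of $R_0'$; and by Lemma~4.21 one has $\lvert\xi_{\bf b}(\Lambda)\rvert=\lvert\xi(\Lambda,x)\rvert=1$. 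Hence the hypotheses of Lemma~5.10, in its $R_0'$ form, are satisfied by $\xi$.

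With this in hand I would compute $\eta(\Lambda,x):=\alpha^*(\xi(\Lambda,x))$. Since $\xi(\Lambda^p,x^p)=G(\Lambda^p,x^p)/G(\Lambda^p)$ and the $x$-free unit $G(\Lambda^p)^{-1}\in R_0$ commutes with $\delta_-$, Theorem~5.14 gives $\alpha^*(\xi(\Lambda,x))=G(\Lambda^p)^{-1}\alpha^*(G(\Lambda,x))=p^{\mu+1}G(\Lambda,x)/G(\Lambda^p)$, which is precisely the computation recorded in~(5.16). Taking the ${\bf b}$-component and using~(4.20) again, $\eta_{\bf b}(\Lambda)=p^{\mu+1}G_{\bf b}(\Lambda)/G(\Lambda^p)=p^{\mu+1}(\Lambda_1\cdots\Lambda_{\mu+1})^{-1}G(\Lambda)/G(\Lambda^p)$. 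Lemma~5.10 then asserts that $(\Lambda_1\cdots\Lambda_{\mu+1})\eta_{\bf b}(\Lambda)=p^{\mu+1}G(\Lambda)/G(\Lambda^p)$ is an invertible element of $R_0'$; dividing by the scalar $p^{\mu+1}$ and using that $R_0'$ is a ${\mathbb C}_p$-vector space yields $G(\Lambda)/G(\Lambda^p)\in R_0'$, as claimed.

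The one genuinely delicate point is the passage from $p^{\mu+1}(\Lambda_1\cdots\Lambda_{\mu+1})^{-1}G(\Lambda)/G(\Lambda^p)\in R_{\bf b}'$ to $G(\Lambda)/G(\Lambda^p)\in R_0'$: one may not simply clear the denominator by multiplying by the monomial $\Lambda_1\cdots\Lambda_{\mu+1}$, since that monomial is unbounded on ${\mathcal D}_+$, so multiplication by it does not carry $R_{\bf b}'$ into $R_0'$ in general, and knowing merely that the product is a bounded series in $R_0$ is not enough to place it in $R_0'$. Handling exactly this is the purpose of Lemma~5.10, whose proof (via Corollary~5.8 and the valuation estimates~(5.11)--(5.12)) exhibits $(\Lambda_1\cdots\Lambda_{\mu+1})\eta_{\bf b}(\Lambda)$ in $R_0'$ directly rather than by formal cancellation. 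Thus the whole proof of the Corollary reduces to checking that $\xi(\Lambda,x)=G(\Lambda,x)/G(\Lambda)$ meets the hypotheses of that lemma and combining it with Theorem~5.14 as above. (Invoking only the weaker conclusion $G_u(\Lambda)/G(\Lambda)\in R_u'$ of Proposition~5.15 at $u={\bf b}$ would merely recover the trivial fact $(\Lambda_1\cdots\Lambda_{\mu+1})^{-1}\in R_{\bf b}'$ and would not suffice.)
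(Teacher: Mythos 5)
Your proof is correct and follows essentially the same route as the paper: both rest on the eigenvector computation~(5.16), the identification~(4.20), Proposition~5.15, and Lemma~5.10. You have, in fact, made explicit the step the paper compresses into ``the result follows'' --- the paper's argument literally yields only $p^{\mu+1}(\Lambda_1\cdots\Lambda_{\mu+1})^{-1}G(\Lambda)/G(\Lambda^p)\in R'_{\bf b}$, and your observation that one cannot simply clear the unbounded monomial $\Lambda_1\cdots\Lambda_{\mu+1}$ but must instead invoke Lemma~5.10 (whose proof exhibits $(\Lambda_1\cdots\Lambda_{\mu+1})\eta_{\bf b}$ directly as an invertible element of $R'_0$) is exactly the right way to close that gap.
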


\begin{proof}
Since $\alpha^*$ is stable on $S'$, Proposition 5.15 implies that the right-hand side of~(5.16) lies in~$S'$.  Since the coefficient of $\gamma_0^{-\mu-1}x^{\bf b}$ on the right-hand side of~(5.16) 
is $p^{\mu+1}(\Lambda_1\cdots\Lambda_{\mu+1})^{-1}G(\Lambda)/G(\Lambda^p)$, the result follows.
\end{proof}

\section{Proof of Theorem 5.14}

Consider the space of formal series
\[ C=\bigg\{ \xi = \sum_{i=0}^\infty c_i i! \gamma_0^{-i-1}t^{-i-1}\mid \text{$\{c_i\}_{i=0}^\infty$ is bounded}\bigg\}. \]
Recall that $\delta_-$ is the truncation operator on series: 
\[ \delta_-\bigg(\sum_{i=-\infty}^\infty d_it^{-i-1}\bigg) = \sum_{i=0}^\infty d_it^{-i-1}. \]
 
\begin{lemma}
The map $\delta_-\circ\hat{\theta}_1(t)$ is an isomorphism of $C$ with itself.  The inverse isomorphism is $\delta_-\circ\hat{\theta}_1(t)^{-1}$.  (We use $\hat{\theta}_1(t)$ (resp.\ $\hat{\theta}_1(t)^{-1}$) as operator to mean multiplication by $\hat{\theta}_1(t)$ (resp.\ $\hat{\theta}_1(t)^{-1}$).)
\end{lemma}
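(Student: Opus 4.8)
The plan is to reduce the lemma to two explicit coefficient computations, using only the estimates (3.10) and (3.16) together with the fact that binomial coefficients are $p$-adic integers.

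First I would check that each of the operators $\delta_-\circ\hat{\theta}_1(t)$ and $\delta_-\circ\hat{\theta}_1(t)^{-1}$ maps $C$ into $C$. Writing $\xi=\sum_{i\geq0}c_i\,i!\,\gamma_0^{-i-1}t^{-i-1}\in C$ with $\{c_i\}$ bounded and $\hat{\theta}_1(t)^{-1}=\sum_{k\geq0}(\hat{\theta}'_{1,k}/k!)\gamma_0^kt^k$, a direct multiplication followed by truncation gives
\[ \delta_-\bigl(\hat{\theta}_1(t)^{-1}\xi\bigr)=\sum_{j\geq0}d_j\,j!\,\gamma_0^{-j-1}t^{-j-1},\qquad d_j=\sum_{k\geq0}\binom{j+k}{k}\hat{\theta}'_{1,k}\,c_{j+k}. \]
Since $\binom{j+k}{k}\in{\mathbb Z}_p$ and ${\rm ord}_p\,\hat{\theta}'_{1,k}\geq k(p-1)/p\to\infty$ by (3.16), each series defining $d_j$ converges and $\lvert d_j\rvert\leq\sup_k\lvert c_{j+k}\rvert$, so $\{d_j\}$ is bounded and the truncated product again lies in $C$. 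The identical argument with $\hat{\theta}'_{1,k}$ replaced by $\hat{\theta}_{1,k}$ and (3.16) replaced by (3.10) handles $\delta_-\circ\hat{\theta}_1(t)$.

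Next I would compose the two maps and show the result is the identity on $C$. Applying the displayed formula twice, for $\xi$ as above one gets $\delta_-\bigl(\hat{\theta}_1(t)\,\delta_-(\hat{\theta}_1(t)^{-1}\xi)\bigr)=\sum_{l\geq0}e_l\,l!\,\gamma_0^{-l-1}t^{-l-1}$ where, collecting the coefficient of $c_{l+n}$ after setting $n=m+k$ and using the elementary identity $\binom{l+m}{m}\binom{l+m+k}{k}=\binom{l+n}{n}\binom{n}{m}$ (both sides equal $(l+n)!/(l!\,m!\,k!)$),
\[ e_l=\sum_{n\geq0}\binom{l+n}{n}\Bigl(\sum_{m+k=n}\binom{n}{m}\hat{\theta}_{1,m}\hat{\theta}'_{1,k}\Bigr)c_{l+n}. \]
The inner sum is $n!$ times the coefficient of $(\gamma_0 t)^n$ in $\hat{\theta}_1(t)\hat{\theta}_1(t)^{-1}=1$, hence equals $1$ for $n=0$ and $0$ otherwise; therefore $e_l=c_l$, i.e. $\delta_-\circ\hat{\theta}_1(t)\circ\delta_-\circ\hat{\theta}_1(t)^{-1}$ is the identity on $C$. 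Interchanging the roles of $\hat{\theta}_1(t)$ and $\hat{\theta}_1(t)^{-1}$ (and using (3.10) in place of (3.16)) gives the other composition, which proves the lemma.

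The only point that will need genuine care is the legitimacy of the reindexing by $n=m+k$ in the second step: one must know the double series $\sum_{m,k}\binom{l+m}{m}\binom{l+m+k}{k}\hat{\theta}_{1,m}\hat{\theta}'_{1,k}c_{l+m+k}$ converges so that it may be summed in any order. This is immediate since the general term has $p$-adic absolute value at most $\lvert p\rvert^{m(p-1)/p}\lvert p\rvert^{k(p-1)/p}\sup_i\lvert c_i\rvert$, which tends to $0$ as $(m,k)\to\infty$ by (3.10) and (3.16). (Alternatively, one could bypass the explicit double sum by first establishing the truncated-associativity identity $\delta_-\bigl(g\cdot\delta_-(h)\bigr)=\delta_-(g\cdot h)$, valid whenever $g$ is a power series in $t$ since then $g$ times the nonnegative-power part of $h$ is annihilated by $\delta_-$, and applying it with $g=\hat{\theta}_1(t)$ and $h=\hat{\theta}_1(t)^{-1}\xi$; this is shorter but shifts the work to checking that $\hat{\theta}_1(t)^{-1}\xi$ is a well-defined Laurent series, which the coefficient bounds above again supply.)
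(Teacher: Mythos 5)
Your argument is correct and follows essentially the same route as the paper's: both verify that $\delta_-\circ\hat\theta_1(t)$ and $\delta_-\circ\hat\theta_1(t)^{-1}$ map $C$ into itself via the same coefficient manipulation and $p$-adic estimates, and both conclude the compositions are the identity (a step the paper states very tersely with ``hence is the inverse,'' implicitly using the truncated-associativity identity $\delta_-(g\cdot\delta_-(h))=\delta_-(gh)$ for power series $g$ that you spell out as an alternative). One small slip: the estimate ${\rm ord}_p\,\hat\theta'_{1,k}\geq k(p-1)/p$ is the paper's equation (3.15), not (3.16).
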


\begin{proof}
Let $\xi = \sum_{j=0}^\infty c_j j! \gamma_0^{-j-1}t^{-j-1}\in C$ and let $k$ be a nonnegative integer.  To simplify the estimate, assume that the $c_j$ are bounded by 1.  The coefficient of $t^{-k-1}$ in the product $\hat{\theta}_1(t)\xi$ is
\[ \sum_{i-j-1=-k-1} c_j j! \gamma_0^{-j-1}\frac{\hat{\theta}_{1,i}}{i!}\gamma_0^i= \bigg(\sum_{i=0}^\infty \hat{\theta}_{1,i}c_{i+k}\frac{(i+k)!}{i!k!}\bigg) k!\gamma_0^{-k-1}. \]
We have by (3.10)
\[ {\rm ord}_p\:\hat{\theta}_{1,i}c_{i+k}\frac{(i+k)!}{i!k!} \geq \frac{i(p-1)}{p} -\frac{s_{i+k} +s_i+s_k}{p-1}\geq \frac{i(p-1)}{p}. \]
This shows that the series $\sum_{i=0}^\infty\hat{\theta}_{1,i}c_{i+k}(i+k)!/(i!k!)$ converges and is bounded by~$1$, hence $\delta_-\circ\hat{\theta}_1(t)$ maps $C$ into itself.  Since the coefficients of the reciprocal power series $\hat{\theta}_1(t)^{-1} = \prod_{j=1}^\infty \exp(-\gamma_jt^{p^j})$ satisfy the same estimate (3.15), the same argument shows that $\delta_-\circ\hat{\theta}_1(t)^{-1}$ also maps $C$ into itself and hence is the inverse of $\delta_-\circ\hat{\theta}_1(t)$.
\end{proof}

Define an operator $D'$ on $C$ by
\begin{equation}
D'=\delta_-\circ \bigg( t\frac{d}{dt} - \sum_{j=0}^\infty \gamma_jp^jt^{p^j}\bigg) = 
\delta_-\circ \hat{\theta}(t)\circ t\frac{d}{dt}\circ \hat{\theta}(t). 
\end{equation}
\begin{proposition}
The operator $D'$ has a one-dimensional (over ${\mathbb C}_p$) kernel as operator on the space $C$.
\end{proposition}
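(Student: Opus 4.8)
The plan is to analyze the operator $D' = \delta_- \circ (t\frac{d}{dt} - \sum_{j=0}^\infty \gamma_j p^j t^{p^j})$ on $C$ by first disposing of the non-differentiated part and reducing to the operator $\delta_- \circ t\frac{d}{dt}$, and then computing the kernel of the latter directly. The key structural identity is the factorization already recorded in (6.2): $D' = \delta_- \circ \hat\theta(t) \circ t\frac{d}{dt} \circ \hat\theta(t)$, where (with a slight abuse) $\hat\theta(t)$ and $\hat\theta(t)^{-1}$ denote multiplication operators. I would first verify that this factorization makes sense as an identity of operators on $C$: multiplication by $\hat\theta(t)$ and by $\hat\theta(t)^{-1}$ each preserve $C$ — this follows from (3.8), exactly as in the proof of Lemma 6.1 for $\hat\theta_1(t)$, since $\hat\theta(t) = \exp(\gamma_0 t)\hat\theta_1(t)$ and one checks the analogous convergence estimate, or alternatively one factors $\hat\theta(t) = \hat\theta_1(t)\cdot\exp(\gamma_0 t)$ and handles the $\exp(\gamma_0 t)$ factor separately using ${\rm ord}_p \gamma_0^i/i! \geq 0$. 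Likewise $\delta_- \circ t\frac{d}{dt}$ preserves $C$: acting on $c_i i!\gamma_0^{-i-1}t^{-i-1}$ it produces $-(i+1)\,c_i i!\,\gamma_0^{-i-1}t^{-i-1} = -c_i(i+1)!\gamma_0^{-i-1}t^{-i-1}$, which is again of the required form with bounded coefficients $-c_i$.

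Granting the factorization, since multiplication by $\hat\theta(t)$ is invertible on $C$ (inverse: multiplication by $\hat\theta(t)^{-1}$, both preserving $C$), we get an isomorphism of kernels:
\[
\ker(D' \mid C) = \hat\theta(t)^{-1}\cdot\ker\bigl(\delta_- \circ t\tfrac{d}{dt} \,\big|\, C\bigr).
\]
So it suffices to show $\delta_- \circ t\frac{d}{dt}$ has a one-dimensional kernel on $C$. From the computation above, if $\xi = \sum_{i=0}^\infty c_i i!\gamma_0^{-i-1}t^{-i-1} \in C$, then $\delta_-(t\frac{d}{dt}\,\xi) = -\sum_{i=0}^\infty c_i(i+1)!\gamma_0^{-i-1}t^{-i-1}$. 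This vanishes if and only if $c_i = 0$ for all $i \geq 0$ with $(i+1)! \neq 0$ in ${\mathbb C}_p$ — but $(i+1)!$ is a nonzero $p$-adic integer for every $i$, so $\delta_-(t\frac{d}{dt}\,\xi) = 0$ forces $c_i = 0$ for all $i \geq 1$, while $c_0$ is unconstrained (the term $c_0\cdot 0!\cdot\gamma_0^{-1}t^{-1}$ survives $\delta_-$ but is killed by $t\frac{d}{dt}$ up to the constant, and then $\delta_-$ of a constant is $0$). Wait — more carefully: $t\frac{d}{dt}(c_0\gamma_0^{-1}t^{-1}) = -c_0\gamma_0^{-1}t^{-1}$, and $\delta_-$ of that is $-c_0\gamma_0^{-1}t^{-1} \neq 0$. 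So in fact the kernel computation gives: $\delta_-(t\frac{d}{dt}\,\xi) = -\sum_{i\geq 0} c_i(i+1)!\gamma_0^{-i-1}t^{-i-1}$, which is zero iff $c_i = 0$ for all $i \geq 0$, i.e. the kernel of $\delta_-\circ t\frac{d}{dt}$ on $C$ is \emph{zero}, not one-dimensional. This means the factorization (6.2) as stated cannot be the whole story — the correct statement must involve the \emph{full} operator $t\frac{d}{dt} - \sum\gamma_j p^j t^{p^j}$ without the outer $\delta_-$ inside, or a constant of integration enters through the interaction of $\delta_-$ with $\frac{d}{dt}$ on Laurent tails. The resolution is that $\frac{d}{dt}$ does not commute with $\delta_-$: for a series with both positive and negative powers, $\frac{d}{dt}$ shifts the $t^0$ term into the $t^{-1}$ term, so the correct operator identity produces one extra degree of freedom. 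I would therefore reorganize: work with the non-truncated operator $\zeta(t)$ from (4.1), which satisfies $t\frac{d}{dt}\zeta = $ (shift), observe that $\zeta(t)\cdot\hat\theta(t)^{-1}$ (suitably normalized, i.e. $\hat\theta(t)^{-1}$ times the generating series $\zeta$) is annihilated by the non-truncated $t\frac{d}{dt} - \sum\gamma_j p^j t^{p^j}$, and that any element of $C$ in the kernel of $D'$ differs from a multiple of this distinguished solution by something that $\delta_-\circ t\frac{d}{dt}$ sends to zero — the one remaining parameter being precisely the coefficient of the $t^{-1}$-term contributed when $\frac{d}{dt}$ crosses the truncation.

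The main obstacle, then, is bookkeeping the boundary term: pinning down exactly how $\delta_-$ fails to commute with $t\frac{d}{dt}$ and showing that this contributes a kernel of dimension exactly one (not zero, not two). Concretely I expect to write, for $\xi \in C$, the exact formula for the coefficients of $D'\xi$ in terms of those of $\xi$ — a lower-triangular (in the $t^{-i-1}$ grading) infinite linear system whose diagonal entries are the nonzero integers $-(i+1)$ coming from $t\frac{d}{dt}$, twisted by the unipotent (strictly lower-triangular, $p$-integral, nilpotent-in-each-degree) contribution of $\sum\gamma_j p^j t^{p^j}$ — and read off that the system $D'\xi = 0$ has a one-dimensional solution space over ${\mathbb C}_p$ because the only row that can fail to pin down its variable is the top one (the $t^{-1}$-coefficient equation), for which the relevant "diagonal" coefficient is $0$. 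The $p$-adic estimates (3.4), (3.8), (3.10) ensure all the relevant series converge and that the solution, built recursively, has bounded coefficients, hence genuinely lies in $C$. Once $\ker(D'\mid C)$ is identified as one-dimensional, spanned by the image under $\delta_-\circ\hat\theta_1(t)$ (or $\hat\theta(t)^{-1}$, depending on normalization) of the distinguished element $\sum_{i\geq 0} i!\gamma_0^{-i-1}t^{-i-1}$, the proposition follows; this distinguished solution is exactly what feeds into the proof of Theorem 5.14 via the generating-series identification of $G(\Lambda,x)$.
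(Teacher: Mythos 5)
Your opening move --- use the factorization (6.2), conjugate by $\hat{\theta}(t)$, and reduce to $\delta_-\circ t\frac{d}{dt}$ --- has a fatal flaw that you correctly sense but misdiagnose. The claim that ``multiplication by $\hat{\theta}(t)$ and by $\hat{\theta}(t)^{-1}$ each preserve $C$ --- this follows from (3.8)'' is false. Estimate (3.8) gives only ${\rm ord}_p\,\hat{\theta}_i\geq 0$, and that is not enough: the coefficient of $t^{-k-1}$ in $\hat{\theta}(t)\xi$ for $\xi=\sum c_i\,i!\,\gamma_0^{-i-1}t^{-i-1}$ works out to $\bigl(\sum_{j\geq 0}\hat{\theta}_j\binom{j+k}{k}c_{j+k}\bigr)k!\,\gamma_0^{-k-1}$, and with ${\rm ord}_p\,\hat{\theta}_j\geq 0$ the terms do not tend to $0$, so the sum generally diverges. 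The same objection applies to your fallback of factoring off $\exp(\gamma_0 t)$: there ${\rm ord}_p\,\gamma_0^i/i!=s_i/(p-1)$ also does not tend to $\infty$. This is precisely why the paper conjugates only by $\hat{\theta}_1$: its coefficients satisfy the stronger estimate (3.10), ${\rm ord}_p\,\hat{\theta}_{1,i}\geq i(p-1)/p\to\infty$, which is exactly what the proof of Lemma 6.1 uses. The $\exp(\gamma_0 t)$ factor must be left alone.

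Your diagnosis of the ``zero kernel'' contradiction is also off. You attribute it to ``$\frac{d}{dt}$ does not commute with $\delta_-$'' and to ``$\frac{d}{dt}$ shifting the $t^0$ term into the $t^{-1}$ term,'' but the operator in (6.2) is $t\frac{d}{dt}$, which preserves degrees and therefore \emph{does} commute with $\delta_-$. The boundary phenomenon actually comes from the leftover multiplication operator $\gamma_0 t$: in the conjugated operator $\delta_-\circ\bigl(t\frac{d}{dt}-\gamma_0 t\bigr)$, the term $\gamma_0 t$ shifts $t^{-1}$ up to $t^0$, which is then annihilated by $\delta_-$. This is exactly what creates the one free constant. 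Relatedly, your picture of a lower-triangular system whose ``top row'' has a vanishing diagonal entry is not what happens: the paper's computation gives the $k$-th equation $-(k+1)(c_k+c_{k+1})=0$, an upper bidiagonal system in which \emph{every} diagonal entry $-(k+1)$ is nonzero; the one-parameter kernel comes from the recursion $c_{k+1}=-c_k$ having one more unknown than constraint at the $c_0$ end, together with the fact that the resulting coefficients $(-1)^ic_0$ are bounded and hence lie in $C$.

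Finally, the proposal never actually completes: after the attempted salvage you only describe what you ``expect to write'' and ``would therefore reorganize.'' What the paper actually does is brief and concrete --- conjugate by $\hat{\theta}_1$ only (justified by Lemma 6.1), which reduces $D'$ to $\delta_-\circ\bigl(t\frac{d}{dt}-\gamma_0 t\bigr)$, and then read off the one-dimensional kernel spanned by $q(t)=\sum(-1)^i i!\,\gamma_0^{-i-1}t^{-i-1}$ from the bidiagonal recursion. Your proposal identifies the right starting identity (6.2) and correctly senses that the naive full conjugation cannot be right, but the convergence obstruction to conjugating by $\hat{\theta}$, the correct source of the boundary term, and the actual shape of the resulting linear system are all misidentified, and the argument is not carried through.
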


\begin{proof}
If $\xi\in C$ is a solution of $D'$, then $\delta_-(\hat{\theta}_1(t)^{-1}\xi)$ lies in $C$ by Lemma~6.1 and is a solution of the operator
\begin{equation} 
\delta_-\circ\bigg(t\frac{d}{dt} - \gamma_0 t\bigg) = \delta_-\circ \exp(\gamma_0 t)\circ t\frac{d}{dt} \circ\exp(-\gamma_0 t). 
\end{equation}
Conversely, if $\xi\in C$ is a solution of (6.4), then $\delta_-(\hat{\theta}_1(t)\xi)$ lies in $C$ and is a solution of $D'$.  Thus it suffices to show that (6.4) has a unique solution (up to scalars) in~$C$.  Applying the operator (6.4) to $\xi = \sum_{i=0}^\infty c_i i! \gamma_0^{-i-1}t^{-i-1}\in C$ gives
\[ \sum_{i=0}^{\infty} (-c_i-c_{i+1})(i+1)!\gamma_0^{-i-1}t^{-i-1}, \]
from which it is clear that the solutions of (6.4) in $C$ are scalar multiples of
\begin{equation}
q(t) := \sum_{i=0}^\infty (-1)^ii!\gamma_0^{-i-1}t^{-i-1}. 
\end{equation}
\end{proof}

Define
\begin{equation}
Q(t) = \delta_-(\hat{\theta}_1(t)q(t)) = \sum_{i=0}^\infty Q_i i! \gamma_0^{-i-1}t^{-i-1}. 
\end{equation}
From Lemma 6.1 we have $Q(t)\in C$; the proof of Lemma 6.1 shows that the $Q_i$ are $p$-integral.  
From the proof of Proposition 6.3 we get the following corollary.
\begin{corollary}
The solutions of $D'$ in $C$ are the scalar multiples of $Q(t)$.
\end{corollary}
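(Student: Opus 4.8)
The plan is to read the corollary off the correspondence of solution spaces that was already set up inside the proof of Proposition~6.3, together with the definition (6.6) of $Q(t)$. Write $\Phi = \delta_-\circ\hat\theta_1(t)$ for the operator on $C$ that multiplies by $\hat\theta_1(t)$ and then truncates; by Lemma~6.1, $\Phi$ is an automorphism of $C$ whose inverse is $\Phi^{-1} = \delta_-\circ\hat\theta_1(t)^{-1}$.

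First I would record precisely what the proof of Proposition~6.3 shows. If $\xi\in C$ satisfies $D'\xi=0$, then $\Phi^{-1}\xi = \delta_-(\hat\theta_1(t)^{-1}\xi)\in C$ is annihilated by the first-order operator (6.4); conversely, if $\eta\in C$ is annihilated by (6.4), then $\Phi\eta = \delta_-(\hat\theta_1(t)\eta)\in C$ satisfies $D'(\Phi\eta)=0$. Since $\Phi$ and $\Phi^{-1}$ are mutually inverse on $C$, these two statements together say that $\Phi$ carries the space of solutions of (6.4) in $C$ bijectively onto the space of solutions of $D'$ in $C$. Moreover the computation in that same proof — applying (6.4) to $\sum_{i\ge 0} c_i\,i!\,\gamma_0^{-i-1}t^{-i-1}$ yields $\sum_{i\ge 0}(-c_i-c_{i+1})(i+1)!\,\gamma_0^{-i-1}t^{-i-1}$ — identifies the space of solutions of (6.4) in $C$ as ${\mathbb C}_p\cdot q(t)$, with $q(t)$ as in (6.5).

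Applying the isomorphism $\Phi$ then gives that the space of solutions of $D'$ in $C$ equals ${\mathbb C}_p\cdot\Phi(q(t))$, and $\Phi(q(t)) = \delta_-(\hat\theta_1(t)q(t)) = Q(t)$ by the very definition (6.6) of $Q(t)$; this is the assertion of the corollary. One should note in passing that $Q(t)\neq 0$, since $\Phi$ is injective and $q(t)\neq 0$, so that $Q(t)$ genuinely spans the (one-dimensional, by Proposition~6.3) kernel. I do not expect any real obstacle here: the corollary is a formal consequence of ingredients already in hand — the $p$-adic estimates behind Lemma~6.1, which make $\delta_-\circ\hat\theta_1(t)$ an automorphism of $C$, and the factorization (6.2) of $D'$ together with $\hat\theta(t) = \exp(\gamma_0 t)\hat\theta_1(t)$, which in Proposition~6.3 reduces $D'$ to (6.4). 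All that remains is to transport the generator $q(t)$ across $\Phi$ and to observe that the image is, by construction, $Q(t)$.
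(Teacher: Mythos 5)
Your proof is correct and follows essentially the same route the paper intends: the paper's terse remark ``From the proof of Proposition~6.3 we get the following corollary'' is exactly your argument, namely that $\delta_-\circ\hat\theta_1(t)$ and $\delta_-\circ\hat\theta_1(t)^{-1}$ are mutually inverse automorphisms of $C$ (Lemma~6.1) that intertwine $D'$ with the operator~(6.4), so the solution space of $D'$ is the image under $\delta_-\circ\hat\theta_1(t)$ of ${\mathbb C}_p\cdot q(t)$, which by (6.6) is ${\mathbb C}_p\cdot Q(t)$. You have merely spelled out the transport of the generator and the nonvanishing of $Q(t)$, which the paper leaves implicit.
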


For $\xi(t)= \sum_{i=0}^\infty c_i i! \gamma_0^{-i-1}t^{-i-1}\in C$ define $\alpha'(\xi)$ to be
\[ \alpha'(\xi) = \delta_-(\theta(t)\xi(t^p)). \]

\begin{proposition}
The operator $\alpha'$ maps $C$ into itself.
\end{proposition}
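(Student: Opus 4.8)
The plan is to compute $\alpha'(\xi)$ explicitly and bound the $p$-adic valuations of its coefficients using the estimate (3.1) for the $\theta_i$ together with Legendre's formula ${\rm ord}_p(n!) = (n - s_n)/(p-1)$, where $s_n$ denotes the sum of the base-$p$ digits of $n$. First I would expand: for $\xi = \sum_{j=0}^\infty c_j\, j!\, \gamma_0^{-j-1} t^{-j-1}\in C$ (and, as in the proof of Lemma 6.1, one may assume $|c_j|\leq 1$; the general bounded case works the same way) one has
\[ \theta(t)\,\xi(t^p) = \sum_{m,j\geq 0} \theta_m\, c_j\, j!\, \gamma_0^{-j-1}\, t^{\,m-p(j+1)}. \]
Applying $\delta_-$ and extracting the coefficient of $t^{-k-1}$, one finds $\alpha'(\xi) = \sum_{k\geq 0} d_k\, k!\, \gamma_0^{-k-1} t^{-k-1}$ with
\[ d_k = \frac{\gamma_0^{k+1}}{k!}\sum_{j\geq j_0(k)} \theta_{p(j+1)-k-1}\, c_j\, j!\, \gamma_0^{-j-1}, \]
where $j_0(k)$ is the least $j\geq 0$ with $p(j+1)-k-1\geq 0$. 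It then remains to show that the inner series converges and that $\{d_k\}$ is bounded.

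The next step is the valuation estimate. Using (3.1) and Legendre's formula, the summand indexed by $j$ in the formula for $d_k$ has $p$-adic valuation at least
\[ \frac{(p(j+1)-k-1) + s_k - s_j}{p-1}. \]
Since $s_{j+1}-s_j\leq 1$ for every $j$, the numerator strictly increases as $j\mapsto j+1$, so this lower bound is increasing in $j$; in particular its minimum over $j\geq j_0(k)$ is attained at $j=j_0(k)$, and the series for $d_k$ converges. The key point is that at $j=j_0(k)$ the defining inequality pins down the digit sums: writing $r = p(j_0+1)-k-1$, minimality of $j_0$ forces $0\leq r\leq p-1$, whence $k = p\,j_0 + (p-1-r)$, so $p-1-r$ is exactly the last base-$p$ digit of $k$ and therefore $s_k = s_{j_0} + (p-1-r)$. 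Substituting, the lower bound at $j=j_0$ equals $\bigl(r + (p-1-r)\bigr)/(p-1) = 1$. Hence ${\rm ord}_p d_k \geq 1$ for all $k$ (and without the normalization one gets ${\rm ord}_p d_k\geq 1 + \inf_j {\rm ord}_p c_j$, a uniform lower bound), so $\{d_k\}$ is bounded and $\alpha'(\xi)\in C$. As a bonus, this shows $\|\alpha'(\xi)\|\leq |p|\cdot\|\xi\|$ on $C$.

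The main obstacle is recognizing that the summation constraint forces the identity $s_k = s_{j_0} + (p-1-r)$: the raw estimate $\bigl((p(j+1)-k-1) + s_k - s_j\bigr)/(p-1)$ looks as though it could be unbounded below (the term $-s_j$ can be large), and it is precisely this digit-sum identity at the left endpoint $j=j_0$ that collapses it to the clean bound ${\rm ord}_p d_k\geq 1$. Everything else — the expansion, the application of $\delta_-$, and the verification that the relevant lower bound is monotone in $j$ — is routine.
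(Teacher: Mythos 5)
Your proof is correct and follows essentially the same route as the paper: expand $\theta(t)\xi(t^p)$, bound each summand using (3.1) together with Legendre's formula ${\rm ord}_p(n!)=(n-s_n)/(p-1)$, and exploit the digit-sum identity forced by the index constraint (in the paper's indices $j+k=pi+(p-1)$, so $s_{j+k}=s_i+(p-1)$). The only difference is bookkeeping: the paper applies the subadditivity $s_j+s_k\geq s_{j+k}$ and then the uniform inequality $j-s_j\geq 0$ to bound every summand at once, whereas you show the term-wise bound is increasing in the summation index and then evaluate the digit sums exactly at the minimal index $j_0(k)$; both yield the same conclusion $\lvert d_k\rvert\leq\lvert p\rvert$.
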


\begin{proof}
For $k\geq 0$, the coefficient of $t^{-k-1}$ in $\theta(t)\xi(t^p)$ is
\[ \sum_{\substack{i,j\geq 0\\ j-pi-p = -k-1}} \theta_jc_i i! \gamma_0^{-i-1}. \]
We may assume the $c_i$ to be $p$-integral, in which case we have the estimate
\begin{align*}
{\rm ord}_p\:\theta_jc_ii!\gamma_0^{-i-1} &\geq \frac{j}{p-1}+\frac{i-s_i}{p-1} -\frac{i+1}{p-1} \\
 &=\frac{j-s_i-1}{p-1}.
\end{align*}
Since $i$ is a linear function of $j$ ($k$ is fixed) and $s_i$ is bounded above by a positive multiple of $\log i$, this estimate shows that the series converges.  The condition $j-pi-p = -k-1$ gives $j+k=pi+(p-1)$, which implies
\[ s_{j+k} = s_i+(p-1). \]
Since $s_j+s_k\geq s_{j+k}$, we get the estimate
\[ {\rm ord}_p\:\theta_jc_ii!\gamma_0^{-i-1}\geq \frac{j-s_j+(p-1)}{p-1} - \frac{s_k+1}{p-1}. \]
The first term on the right-hand side is always $\geq 1$, which implies that we can write
\[  \sum_{\substack{i,j\geq 0\\ j-pi-p = -k-1}} \theta_jc_i i! \gamma_0^{-i-1} = pd_kk!\gamma_0^{-k-1} \]
for some $d_k$ which is $p$-integral.  This proves the proposition.
\end{proof}

\begin{proposition}
As operators on $C$ we have $D'\circ\alpha' = p\alpha'\circ D'$.
\end{proposition}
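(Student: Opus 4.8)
The plan is to establish the identity first for the \emph{untruncated} operators and then to move the truncation operator $\delta_-$ to the outside on both sides. Write $\sigma$ for the substitution operator $(\sigma\xi)(t)=\xi(t^p)$, and set $\tilde{L}=t\frac{d}{dt}-\sum_{j=0}^{\infty}\gamma_jp^jt^{p^j}$ and $\tilde{\alpha}(\xi)=\theta(t)\xi(t^p)$, so that $D'=\delta_-\circ\tilde{L}$ and $\alpha'=\delta_-\circ\tilde{\alpha}$ on $C$ by definition. The clean way to see the untruncated identity is by conjugation: since $\hat{\theta}(t)=\exp\big(\sum_j\gamma_jt^{p^j}\big)$ one has, at least formally, $\tilde{L}=\hat{\theta}(t)\circ t\frac{d}{dt}\circ\hat{\theta}(t)^{-1}$ (cf.\ the factored form of $D'$ in (6.2)) and, because $\theta(t)=\hat{\theta}(t)/\hat{\theta}(t^p)$ and $\sigma$ is multiplicative, $\tilde{\alpha}=\hat{\theta}(t)\circ\sigma\circ\hat{\theta}(t)^{-1}$; the inner $\hat{\theta}(t)^{-1}$ and outer $\hat{\theta}(t)$ then cancel in the middle of both $\tilde{L}\circ\tilde{\alpha}$ and $\tilde{\alpha}\circ\tilde{L}$, and the elementary chain rule $t\frac{d}{dt}\circ\sigma=p\,\sigma\circ t\frac{d}{dt}$ gives
\begin{align*}
\tilde{L}\circ\tilde{\alpha}
&=\hat{\theta}(t)\circ t\frac{d}{dt}\circ\sigma\circ\hat{\theta}(t)^{-1}\\
&=p\,\hat{\theta}(t)\circ\sigma\circ t\frac{d}{dt}\circ\hat{\theta}(t)^{-1}
=p\,\tilde{\alpha}\circ\tilde{L}.
\end{align*}
Concretely and rigorously, the identity to be checked is $\tilde{L}\big(\theta(t)\xi(t^p)\big)=p\,\theta(t)\,(\tilde{L}\xi)(t^p)$ for $\xi\in C$, which is a direct computation starting from $\log\theta(t)=\sum_j\gamma_jt^{p^j}-\sum_j\gamma_jt^{p^{j+1}}$, i.e.\ $t\theta'(t)/\theta(t)=\sum_j\gamma_jp^jt^{p^j}-p\sum_j\gamma_jp^jt^{p^{j+1}}$; all the series involved converge $p$-adically by the estimates of Section~3 (in particular (3.1) and (3.4)) and the kind of bounds used in the proofs of Lemma~6.1 and Proposition~6.8, so the manipulation is legitimate.

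Next I would insert the truncations, the key point being a support observation: the multiplier $\sum_{j\geq 0}\gamma_jp^jt^{p^j}$ involves only powers $t^m$ with $m\geq 1$ (its lowest term is $\gamma_0t$), while $\theta(t)=\sum_{i\geq 0}\theta_it^i$ involves only powers $m\geq 0$. Hence if $\zeta$ is any Laurent series supported in nonnegative powers of $t$, then so are $\tilde{L}\zeta$ (since $t\frac{d}{dt}$ preserves the support and multiplying by $\sum_j\gamma_jp^jt^{p^j}$ raises it by at least $1$) and $\tilde{\alpha}\zeta$ (since $\sigma\zeta$ is supported in nonnegative powers and $\theta(t)$ raises it by at least $0$). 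Writing $\delta_+={\rm id}-\delta_-$, this says precisely that $\delta_-\circ\tilde{L}\circ\delta_+=0$ and $\delta_-\circ\tilde{\alpha}\circ\delta_+=0$ on the relevant series. Applying this with $\zeta=\delta_+(\tilde{\alpha}\xi)$ and with $\zeta=\delta_+(\tilde{L}\xi)$ for $\xi\in C$ gives, on $C$,
\begin{align*}
D'\circ\alpha'&=\delta_-\circ\tilde{L}\circ\delta_-\circ\tilde{\alpha}=\delta_-\circ\tilde{L}\circ\tilde{\alpha},\\
p\,\alpha'\circ D'&=p\,\delta_-\circ\tilde{\alpha}\circ\delta_-\circ\tilde{L}=p\,\delta_-\circ\tilde{\alpha}\circ\tilde{L};
\end{align*}
here $\tilde{\alpha}\xi$ and $\tilde{L}\xi$ are genuine Laurent series with bounded coefficients for $\xi\in C$, again by the estimates behind Proposition~6.8 and (3.1), (3.4). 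Applying $\delta_-$ to the untruncated identity $\tilde{L}\circ\tilde{\alpha}=p\,\tilde{\alpha}\circ\tilde{L}$ now yields $D'\circ\alpha'=p\,\alpha'\circ D'$, as desired.

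The only step that requires more than bookkeeping is this last one — replacing the nested truncations in $D'\circ\alpha'$ and $\alpha'\circ D'$ by a single outer $\delta_-$ — and it rests exactly on the two support facts above together with the convergence estimates that guarantee the intermediate untruncated series exist. I expect the main (modest) obstacle to be organizing these support and convergence details cleanly; once they are in place, the factor $p$ in the conclusion is forced, appearing solely through the interaction $t\frac{d}{dt}\circ\sigma=p\,\sigma\circ t\frac{d}{dt}$ of differentiation with the $p$-power substitution.
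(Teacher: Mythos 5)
Your argument is correct and matches the paper's: both factor $D'$ and $\alpha'$ as conjugates of $t\frac{d}{dt}$ and the $p$-power substitution by $\hat{\theta}(t)$ (the paper writes $\Phi$ for your $\sigma$), reducing the claim to $t\frac{d}{dt}\circ\sigma = p\,\sigma\circ t\frac{d}{dt}$. Your support observation that lets you drop the interior $\delta_-$ in each composition makes explicit the step the paper leaves implicit when it says the factorizations ``reduce'' the proposition to the chain-rule identity, and is a worthwhile clarification.
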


\begin{proof}
If we let $\Phi$ be the map that sends an element $\xi(t)\in C$ to $\xi(t^p)$, then we may factor $\alpha'$ as
\[ \alpha' = \delta_-\circ\hat{\theta}(t)\circ\Phi\circ\hat{\theta}(t)^{-1}. \]
Combined with the corresponding factorization of $D'$ (see Equation (6.2)), this reduces the assertion of the proposition to the obvious equality
\[ t\frac{d}{dt}\circ\Phi = p\Phi\circ t\frac{d}{dt}. \]
\end{proof}

It follows from Corollary 6.7 and Proposition 6.9 that $Q(t)$ is an eigenvector of~$\alpha'$.  More precisely, we have the following result.
\begin{proposition}
$\alpha'(Q(t)) = pQ(t)$
\end{proposition}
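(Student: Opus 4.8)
The plan is to prove the relation in two stages: first the soft statement that $\alpha'(Q(t))$ is a scalar multiple of $Q(t)$ — which is already contained in the discussion preceding the statement — and then the hard statement that the scalar is \emph{exactly} $p$, which I would extract by differentiating and comparing constant terms.

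For the first stage: by Corollary 6.7 the kernel of $D'$ on $C$ is one-dimensional, spanned by $Q(t)$, and $\alpha'$ maps $C$ into itself (Proposition 6.8); since $D'Q(t)=0$, Proposition 6.9 gives $D'\bigl(\alpha'(Q(t))\bigr)=p\,\alpha'\bigl(D'(Q(t))\bigr)=0$, so $\alpha'(Q(t))\in\ker D'$ and hence $\alpha'(Q(t))=c\,Q(t)$ for some $c\in{\mathbb C}_p$. Everything now reduces to showing $c=p$. A crude comparison of, say, the coefficients of $t^{-1}$ on the two sides only yields $c\equiv p$ to high $p$-adic precision, since that coefficient of $\alpha'(Q(t))$ is an infinite $p$-adic sum; instead I would use the \emph{non-truncated} connection $D=t\frac{d}{dt}-g(t)$, where $g(t)=\sum_{j\ge0}\gamma_jp^jt^{p^j}$, so that $D'=\delta_-\circ D$. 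The elementary but crucial point is that for every power series $F\in{\mathbb C}_p[[t]]$ one has $DF\in{\mathbb C}_p[[t]]$ with constant term $[DF]_0=0$, because $t\frac{d}{dt}$ kills constants and $g(t)$ has none. In particular $D'Q(t)=0$ says precisely that $DQ(t)\in{\mathbb C}_p[[t]]$, and I claim its constant term is $[DQ(t)]_0=-1$.

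This claim follows from two clean computations. First, $t\frac{d}{dt}\hat\theta_1(t)=\hat\theta_1(t)\bigl(g(t)-\gamma_0t\bigr)$, so conjugating $D$ by multiplication by $\hat\theta_1(t)$ gives $t\frac{d}{dt}-\gamma_0t$, i.e. $D\bigl(\hat\theta_1(t)h(t)\bigr)=\hat\theta_1(t)\bigl(t\frac{d}{dt}-\gamma_0t\bigr)h(t)$ for all $h$; taking $h=q$ and using $\bigl(t\frac{d}{dt}-\gamma_0t\bigr)q(t)=-1$ (the defining property of $q$ from the proof of Proposition 6.3) gives $D\bigl(\hat\theta_1(t)q(t)\bigr)=-\hat\theta_1(t)$, of constant term $-1$. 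Writing $\hat\theta_1(t)q(t)=Q(t)+P(t)$ with $P(t)\in{\mathbb C}_p[[t]]$ its nonnegative-power part, linearity and $[DP(t)]_0=0$ give $[DQ(t)]_0=-1$. Second, since $\hat\theta_1(t^p)q(t^p)=Q(t^p)+P(t^p)$ with $P(t^p)\in{\mathbb C}_p[[t]]$ and $\theta(t)\in{\mathbb C}_p[[t]]$, one has $\alpha'(Q(t))=\delta_-\bigl(\theta(t)Q(t^p)\bigr)=\delta_-\bigl(\theta(t)\hat\theta_1(t^p)q(t^p)\bigr)$. Set $\Theta(t):=\theta(t)\hat\theta_1(t^p)=\hat\theta(t)\exp(-\gamma_0t^p)\in{\mathbb C}_p[[t]]$, so that $\Theta(0)=1$ and $t\frac{d}{dt}\Theta(t)=\Theta(t)\bigl(g(t)-p\gamma_0t^p\bigr)$; combining this with $t\frac{d}{dt}q(t^p)=p\gamma_0t^pq(t^p)-p$ (obtained by substituting $t\mapsto t^p$ in $t\frac{d}{dt}q=\gamma_0tq-1$, using $t\frac{d}{dt}\circ\Phi=p\,\Phi\circ t\frac{d}{dt}$) a one-line computation yields $D\bigl(\Theta(t)q(t^p)\bigr)=-p\,\Theta(t)$, of constant term $-p$.

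To finish: from $\delta_-\bigl(\Theta(t)q(t^p)\bigr)=\alpha'(Q(t))=c\,Q(t)$ we write $\Theta(t)q(t^p)=c\,Q(t)+W(t)$ with $W(t)\in{\mathbb C}_p[[t]]$; applying $D$ and comparing constant terms, $-p=[D(\Theta(t)q(t^p))]_0=c\,[DQ(t)]_0+[DW(t)]_0=c\cdot(-1)+0$, so $c=p$. The genuine obstacle is precisely this last step — making the eigenvalue come out to $p$ on the nose rather than merely congruent to $p$; the device that accomplishes it is to differentiate the eigenvector relation and compare constant terms, the vanishing of $[DF]_0$ for $F\in{\mathbb C}_p[[t]]$ being what collapses an apparently infinite computation into the two exact values $[DQ(t)]_0=-1$ and $[D(\Theta(t)q(t^p))]_0=-p$, the $p$ here being exactly the $-1$ in the equation for $q$ transported through the substitution $t\mapsto t^p$. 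As always one should check that the series manipulations are legitimate: all products appearing have $p$-adically convergent coefficients by the estimates of Section 3, or equivalently one may work on the annulus $1<|t|<|p|^{-1/(p-1)}$ on which $\theta(t)q(t^p)$ converges.
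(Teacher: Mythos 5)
Your argument is correct and takes a genuinely different route from the paper. Both proofs begin with the same soft step (Corollary~6.7, Proposition~6.8 and Proposition~6.9 give $\alpha'(Q)=cQ$), but the paper pins down $c=p$ by a duality argument: it introduces the space $C^*$ of power series, the operator $D=td/dt+g(t)$ acting there, invokes the isomorphism $D:C\to C_0^*$ from \cite[Theorem~3.8]{AS0}, uses a long-exact-sequence / commutative-diagram computation to show that $\alpha$ induces multiplication by $p$ on ${\mathbb C}_p\cong C_0^*/DC_0^*$, and then transports this through the pairing $\langle\,,\,\rangle$ between $C$ and $C_0^*$, needing only the non-vanishing of $\langle Q,D(1)\rangle$, which it verifies by estimating the series (6.16). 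You instead work directly with the non-truncated operator $D=t\frac{d}{dt}-g(t)$ on Laurent series, observe that $D'=\delta_-\circ D$ and that $[DF]_0=0$ whenever $F\in{\mathbb C}_p[[t]]$, and compute the two exact values $[DQ]_0=-1$ (from $D(\hat\theta_1 q)=-\hat\theta_1$) and $[D(\Theta q(t^p))]_0=-p$ (from $D(\Theta q(t^p))=-p\Theta$ with $\Theta=\theta(t)\hat\theta_1(t^p)=\hat\theta(t)\exp(-\gamma_0 t^p)$); comparing constant terms in $\Theta q(t^p)=cQ+W$ yields $c=p$. Your route has two advantages: it avoids the external reference to \cite[Theorem~3.8]{AS0} and the long exact sequence, and it upgrades the paper's ``$\langle Q,D(1)\rangle$ is a unit'' to the exact identity $\langle Q,D(1)\rangle=1$, so one does not need the Newton-polygon-style estimate on (6.16) at all. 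What the paper's version buys is a more conceptual interpretation of the eigenvalue $p$ as the induced action on the one-dimensional quotient $C_0^*/DC_0^*$.

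One point you should spell out more carefully in a final write-up: the equality $D(\Theta q(t^p))=-p\Theta$ is established by Leibniz-rule manipulations in which some intermediate products (in particular $g(t)\cdot\Theta q(t^p)$ and $\Theta\cdot t\frac{d}{dt}q(t^p)$) have coefficients given by infinite $p$-adic sums, so you need to verify these converge and that rearrangement is legitimate before reading off the constant term. The required estimates are present in Section~3 (e.g.\ (3.1), (3.10), (3.23), and the computation in the proof of Proposition~6.8 for $\theta(t)\xi(t^p)$ with $\xi\in C$), and your alternative argument via the annulus $1<|t|<p^{(p-1)/p^2}$ on which both $\Theta(t)$ and $q(t^p)$ converge is sound, but as written this convergence check is waved at rather than done. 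This is the only thing standing between your proposal and a complete proof.
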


\begin{proof}
Let $C^*$ be the space of series
\[ C^* = \bigg\{ \eta(t) = \sum_{i=0}^\infty c_i\gamma_0^it^i\mid \text{$\{c_i\}$ is bounded}\bigg\} \]
and let $C^*_0$ be the subset consisting of those series $\eta\in C^*$ with $c_0=0$.  The differential operator
$D:=td/dt + \sum_{j=0}^\infty \gamma_jp^jt^{p^j}$ acts on $C^*$ and by \cite[Theorem~3.8]{AS0} the map
$D:C\to C^*_0$ is an isomorphism.  

Define $\psi:C^*\to C^*$ by $\psi(\sum_{i=0}^\infty c_i\gamma_0^it^i) = \sum_{i=0}^\infty c_{pi}\gamma_0^{pi}t^i$ and define $\alpha:C^*\to C^*$ to be the composition $\psi\circ\theta(t)$.  A calculation analogous to the proof of Proposition~6.9 shows that as operators on $C^*$
\begin{equation}
\alpha\circ D = pD\circ\alpha.
\end{equation}
We have a commutative diagram with exact rows
\begin{equation}
\begin{CD}
0 @>>> C^*_0 @>>> C^* @>>> {\mathbb C}_p @>>> 0 \\
@VVV @V{D}VV @V{D}VV @VVV @VVV \\
0 @>>> C^*_0 @>>{\rm id}> C^*_0 @>>> 0 @>>> 0
\end{CD}
\end{equation}
where $C^*_0\to C^*$ is the inclusion, $C^*_0\to C^*_0$ is the identity, and $C^*\to{\mathbb C}_p$ is the map ``set $t=0$.''  Since $D:C^*\to C^*_0$ is an isomorphism, the long-exact cohomology sequence associated to (6.12) implies that there is an isomorphism ${\mathbb C}_p\cong C^*_0/DC^*_0$ which identifies $1\in{\mathbb C}_p$ with the class $D(1) + DC_0^*\in C_0^*/DC_0^*$.  It is easily seen that $\alpha(1)\in 1+C_0^*$, so (6.11) implies
\begin{equation}
\alpha\big(D(1)\big) = pD\big(\alpha(1)\big) \equiv pD(1)\pmod{DC_0^*}. 
\end{equation}
It follows that the induced action of $\alpha$ on ${\mathbb C}_p\cong C_0^*/DC_0^*$ is multiplication by $p$.

Define a pairing between the spaces $C$ and $C^*_0$: for $\xi = \sum_{i=0}^{\infty} c_ii!\gamma_0^{-i-1}t^{-i-1}\in C$ and $\eta = \sum_{i=0}^\infty b_i\gamma_0^{i+1}t^{i+1}\in C^*_0$ put
\[ \langle \xi,\eta \rangle = \sum_{i=0}^\infty b_ic_ii!. \]
The series on the right-hand side converges because the $\{c_i\}$ and $\{b_i\}$ are bounded and $i!\to 0$ as $i\to\infty$.  
Note that if $u\in{\mathbb Z}_{>0}$ and $v\in{\mathbb Z}_{<0}$, then
\[ \langle t^v,D(t^u)\rangle = -\langle D'(t^v),t^u\rangle = \begin{cases} u & \text{if $u+v=0$,} \\ \gamma_jp^j & \text{if $u+v=-p^j$ for some $j$,}\\ 0 & \text{otherwise,} \end{cases} \]
which implies that 
\begin{equation}
\langle D'(\xi),\eta\rangle = -\langle \xi,D(\eta) \rangle
\end{equation}
 for $\xi\in C$ and $\eta\in C^*_0$.
A direct calculation also shows that 
\[ \langle \alpha'(t^v),t^u\rangle = \langle t^v,\alpha(t^u)\rangle = \theta_{-pv-u}, \]
which implies that 
\begin{equation}
\langle \alpha'(\xi),\eta\rangle = \langle \xi,\alpha(\eta) \rangle
\end{equation}
 for $\xi\in C$ and $\eta\in C^*_0$.  We then have
\begin{align*}
 \langle \alpha'(Q(t)),D(1)\rangle &= \langle Q(t),\alpha(D(1) \rangle \\
 &= \langle Q(t),pD(1) + \eta\rangle
\end{align*}
for some $\eta\in DC^*_0$ by (6.13).  But $\langle Q(t),DC^*_0\rangle = 0$ by (6.14) and Corollary~6.7, so we get
\[  \langle \alpha'(Q(t)),D(1)\rangle = p \langle Q(t),D(1)\rangle. \]
Since we already know that $\alpha'(Q(t))$ is a scalar multiple of $Q(t)$, the proposition will follow from this equality once we have checked that $\langle Q(t),D(1)\rangle\neq 0$.

We have $D(1) = \sum_{j=0}^\infty \gamma_jp^jt^{p^j}$ and $Q(t) = \sum_{i=0}^\infty Q_ii!\gamma_0^{-i-1}t^{-i-1}$, so
\begin{equation}
\langle Q(t),D(1)\rangle = \sum_{j=0}^\infty \gamma_jp^jQ_{p^j-1} (p^j-1)!\gamma_0^{-p^j}. 
\end{equation}
We have by (3.4) and the $p$-integrality of the $Q_i$
\[ {\rm ord}_p\: \gamma_jp^jQ_{p^j-1} (p^j-1)!\gamma^{-p^j}\geq \frac{p^{j+1}}{p-1} - (j+1) +j +\frac{p^j-1-j(p-1)}{p-1}-\frac{p^j}{p-1}, \]
which simplifies to
\[ {\rm ord}_p\: \gamma_jp^jQ_{p^j-1} (p^j-1)!\gamma^{-p^j}\geq \sum_{i=0}^j (p^i-1). \]
The right-hand side of this inequality is an increasing function of $j$, positive for $j>0$, so to prove the expression (6.16) is not zero, it suffices to show that $Q_0$, the contribution to the sum on the right-hand side of (6.16) for $j=0$, is a unit.  From the definition (6.6) we compute
\[ Q_0 = \sum_{i=0}^\infty (-1)^i \hat{\theta}_{1,i}. \]
The desired assertion about $Q_0$ then follows from (3.10) and the fact that \mbox{$\hat{\theta}_{1,0} = 1$}.
\end{proof}

For $i=1,\dots,\mu+1$, consider the series
$Q(\Lambda_ix^{{\bf a}^+_i})$.  Proposition 6.10 implies that all exponents of the series
\[ \theta(\Lambda_ix^{{\bf a}^+_i})Q(\Lambda_i^px^{p{\bf a}^+_i})-pQ(\Lambda_ix^{{\bf a}^+_i}) \]
are nonnegative (we are just replacing $t$ in Proposition~6.10 by $\Lambda_i x^{{\bf a}_i^+}$).  It follows that
\begin{equation}
\delta_-\bigg( \prod_{i=1}^{\mu+1} \big( \theta(\Lambda_ix^{{\bf a}^+_i})Q(\Lambda_i^px^{p{\bf a}^+_i})-pQ(\Lambda_ix^{{\bf a}^+_i})\big)\bigg) = 0.
\end{equation}
Our choice of the set $\{ {\bf a}_i^+ \}_{i=1}^{\mu+1}$ implies that an integral linear combination $\sum_{i=1}^N l_i {\bf a}_i^+$ lies in $M_-$ only if $l_i<0$ for $i=1,\dots,\mu+1$.  This implies that when the product in (6.17) is expanded, the only terms not annihilated by $\delta_-$ are 
\[ \prod_{i=1}^{\mu+1}\theta(\Lambda_ix^{{\bf a}^+_i})Q(\Lambda_i^px^{p{\bf a}^+_i})\quad\text{and}\quad \prod_{i=1}^{\mu+1} pQ(\Lambda_ix^{{\bf a}^+_i}). \]
Furthermore, $\delta_-\big(\prod_{i=1}^{\mu+1} pQ(\Lambda_ix^{{\bf a}^+_i})\big) = \prod_{i=1}^{\mu+1} pQ(\Lambda_ix^{{\bf a}^+_i})$.   Equation (6.17) therefore implies that
\begin{equation}
\delta_-\bigg(\prod_{i=1}^{\mu+1} \theta(\Lambda_ix^{{\bf a}^+_i})Q(\Lambda_i^px^{p{\bf a}^+_i})\bigg) = p^{\mu+1} \prod_{i=1}^{\mu+1} Q(\Lambda_ix^{{\bf a}^+_i}).
\end{equation}

\begin{lemma}
We have
\[ G(\Lambda,x) = \delta_-\bigg(\bigg(\prod_{j=1}^{\mu+1} Q(\Lambda_ix^{{\bf a}^+_i})\bigg)\bigg(\prod_{j=\mu+2}^N \hat{\theta}(\Lambda_jx^{{\bf a}^+_j})\bigg)\bigg). \]
\end{lemma}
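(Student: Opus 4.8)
The plan is to start from the explicit expansion of $G(\Lambda,x)$ recorded in its definition~(4.14) and to replace the factors $\zeta(\gamma_0\Lambda_jx^{{\bf a}^+_j})\hat{\theta}_1(\Lambda_jx^{{\bf a}^+_j})$, for $j=1,\dots,\mu+1$, by the series $Q(\Lambda_jx^{{\bf a}^+_j})$, checking that the error terms are annihilated by $\delta_-$. First I would note the elementary identity $\zeta(\gamma_0t)=q(t)$: comparing the defining series~(4.1) for $\zeta$ with~(6.5) for $q$, both equal $\sum_{l=0}^\infty(-1)^ll!\,\gamma_0^{-l-1}t^{-l-1}$. Hence~(4.14) becomes
\[ G(\Lambda,x)=\delta_-\bigg(\bigg(\prod_{j=1}^{\mu+1}\hat{\theta}_1(\Lambda_jx^{{\bf a}^+_j})q(\Lambda_jx^{{\bf a}^+_j})\bigg)\bigg(\prod_{j=\mu+2}^N\hat{\theta}(\Lambda_jx^{{\bf a}^+_j})\bigg)\bigg). \]
Next I would introduce the auxiliary series $R(t):=\hat{\theta}_1(t)q(t)-Q(t)$. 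By~(6.6) we have $Q(t)=\delta_-\big(\hat{\theta}_1(t)q(t)\big)$, so $R(t)$ is an ordinary power series in $t$ with only nonnegative powers; consequently each $R(\Lambda_jx^{{\bf a}^+_j})$ involves only monomials $x^{m{\bf a}^+_j}$ with $m\ge 0$, all of whose $x$-exponents are $\ge 0$.

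Then I would expand the $(\mu+1)$-fold product $\prod_{j=1}^{\mu+1}\big(Q(\Lambda_jx^{{\bf a}^+_j})+R(\Lambda_jx^{{\bf a}^+_j})\big)$ as a sum over subsets $I\subseteq\{1,\dots,\mu+1\}$ of the terms $\prod_{j\in I}R(\Lambda_jx^{{\bf a}^+_j})\cdot\prod_{j\notin I}Q(\Lambda_jx^{{\bf a}^+_j})$, and multiply each such term by $\prod_{j=\mu+2}^N\hat{\theta}(\Lambda_jx^{{\bf a}^+_j})$. Any monomial $x^u$ occurring in the term indexed by $I$ has $u=\sum_{j=1}^Nl_j{\bf a}^+_j$ with $l_j\ge 0$ for $j\in I$ and for $j=\mu+2,\dots,N$ (because $R$ and $\hat{\theta}$ carry only nonnegative powers), while $l_j\le -1$ for $j\in\{1,\dots,\mu+1\}\setminus I$ (because $Q$ carries only strictly negative powers of $t$). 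By the observation preceding~(6.18) --- a consequence of hypothesis~(1.9) --- the requirement $u\in M_-$ forces $l_j<0$ for every $j=1,\dots,\mu+1$, which contradicts $l_j\ge 0$ for $j\in I$ whenever $I\neq\emptyset$. Since $\delta_-$ is linear, every term with $I\neq\emptyset$ is therefore annihilated by $\delta_-$, and only the $I=\emptyset$ term survives. That term is precisely $\delta_-\big(\prod_{j=1}^{\mu+1}Q(\Lambda_jx^{{\bf a}^+_j})\cdot\prod_{j=\mu+2}^N\hat{\theta}(\Lambda_jx^{{\bf a}^+_j})\big)$, which is the assertion of the lemma.

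The step needing the most care is this last vanishing argument: one must keep the signs of the $l_j$ straight across the three groups of indices $j\in I$, $j\in\{1,\dots,\mu+1\}\setminus I$, and $j\ge\mu+2$, so that the cited combinatorial observation genuinely applies; this is exactly the bookkeeping carried out in passing from~(6.17) to~(6.18), so it introduces no new idea. The remaining manipulations are purely formal and take place within $S$, using that $\hat{\theta}_1(\Lambda_jx^{{\bf a}^+_j})$ and $\hat{\theta}(\Lambda_jx^{{\bf a}^+_j})$ have componentwise nonnegative $x$-exponents --- which is also what permits working with the single truncation operator displayed in~(4.14) rather than the nested one coming from the definition of $F(\Lambda,x)$.
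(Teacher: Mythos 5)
Your proof is correct and follows the same route as the paper: both observe $\zeta(\gamma_0 t)=q(t)$, combine $\exp(\gamma_0 t)\hat\theta_1(t)=\hat\theta(t)$ in (4.14), and then replace $\hat\theta_1(t)q(t)$ by its negative part $Q(t)$. The one place where you add genuine content is the last replacement step, which the paper dismisses with ``follows from the definition of $Q(t)$'': you introduce $R=\hat\theta_1 q-Q$, expand $\prod_{j\le\mu+1}(Q+R)$ over subsets $I$, and invoke the combinatorial consequence of hypothesis (1.9) (stated just before (6.18)) to show every term with $I\neq\emptyset$ is annihilated by $\delta_-$; this is exactly the justification that the paper leaves implicit, and your bookkeeping of the sign conditions on the $l_j$ across the three index groups is done correctly. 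Your closing remark that the manipulations ``take place within $S$'' is slightly loose — the intermediate formal products need not lie in $S$ before $\delta_-$ is applied — but this does not affect the argument.
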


\begin{proof}
From the definitions of $F(\Lambda,x)$ and $q(t)$ we have
\[ F(\Lambda,x) = \delta_-\bigg(\prod_{j=1}^{\mu+1} q(\Lambda_jx^{{\bf a}_j^+})\prod_{j=\mu+2}^N \exp(\gamma_0\Lambda_jx^{{\bf a}_j^+})\bigg). \]
From the definitions of $G(\Lambda,x)$ and $\hat{\theta}_1(\Lambda,x)$ (Eqns.~(4.14) and~(3.11)) we get
\[ G(\Lambda,x) = \delta_-\bigg(\prod_{j=1}^{\mu+1} q(\Lambda_jx^{{\bf a}_j^+})\prod_{j=\mu+2}^N \exp(\gamma_0\Lambda_jx^{{\bf a}_j^+})\prod_{j=1}^N \hat{\theta}_1(\Lambda_jx^{{\bf a}_j^+})\bigg). \]
Using the definitions of $\hat{\theta}(t)$ and $\hat{\theta}_1(t)$ (Eqns.~(3.3) and~(3.9)), this equation may be rewritten as
\[ G(\Lambda,x) = \delta_-\bigg(\prod_{j=1}^{\mu+1} \big(q(\Lambda_jx^{{\bf a}_j^+})\hat{\theta}_1(\Lambda_jx^{{\bf a}_j^+})\big) \prod_{j=\mu+2}^N \hat{\theta}(\Lambda_jx^{{\bf a}_j^+})\bigg). \]
The assertion of Lemma 6.19 now follows from the definition of $Q(t)$ (Eq.~(6.6)).
\end{proof}

We can now prove Theorem 5.14.  First note that since $\theta(t) = \hat{\theta}(t)/\hat{\theta}(t^p)$, we have
\begin{equation}
\prod_{j=\mu+2}^N \theta(\Lambda_jx^{{\bf a}^+_j}) \prod_{j=\mu+2}^N \hat{\theta}(\Lambda_j^px^{p{\bf a}^+_j}) = \prod_{j=\mu+2}^N \hat{\theta}(\Lambda_jx^{{\bf a}^+_j}).
\end{equation}
We now compute:
\begin{multline*}
\alpha^*\big(G(\Lambda,x)\big) = \delta_-\bigg(\prod_{j=1}^N \theta(\Lambda_jx^{{\bf a}^+_j}) \delta_-\bigg(\bigg(\prod_{i=1}^{\mu+1} Q(\Lambda_i^px^{p{\bf a}^+_i})\bigg)\bigg(\prod_{j=\mu+2}^N \hat{\theta}(\Lambda_j^px^{p{\bf a}^+_j})\bigg)\bigg)\bigg) \\
 = \delta_-\bigg(\bigg( \prod_{i=1}^{\mu+1} \theta(\Lambda_ix^{{\bf a}^+_i})Q(\Lambda_i^px^{p{\bf a}^+_i})\bigg)\bigg(\prod_{j=\mu+2}^N \theta(\Lambda_jx^{{\bf a}^+_j}) \prod_{j=\mu+2}^N \hat{\theta}(\Lambda_j^px^{p{\bf a}^+_j}) \bigg)\bigg) \\
 = p^{\mu+1}\delta_-\bigg(\bigg(\prod_{i=1}^{\mu+1} Q(\Lambda_ix^{{\bf a}^+_i})\bigg)\bigg(\prod_{j=\mu+2}^N \hat{\theta}(\Lambda_jx^{{\bf a}^+_j})\bigg)\bigg) = p^{\mu+1}G(\Lambda,x),
\end{multline*}
where the first equality follows from Lemma 6.19, the next-to-last equality follows from Equations~(6.18) and~(6.20), and the last equality follows from Lemma~6.19.

\section{Zeta functions}  

Let $f_\lambda(x_0,\dots,x_n)$ be as defined in the Introduction.  We associate to $f_\lambda$ exponential sums
\[ S_\lambda(m) = \sum_{x\in{\mathbb A}^{n+2}({\mathbb F}_{q^m})} \Psi \big({\rm  Tr}_{{\mathbb F}_{q^m}/{\mathbb F}_p}(x_{n+1}f_\lambda(x_0,\dots,x_n))\big), \]
where $\Psi:{\mathbb F}_p\to{\mathbb Q}_p(\zeta_p)^\times$ is the additive character satisfying
\[ \Psi(1) \equiv 1+\gamma_0\pmod{\gamma_0^2}. \]
We denote the corresponding $L$-function by $L_\lambda(t)$: 
\[ L_\lambda(t) = \exp\bigg(\sum_{m=1}^\infty S_\lambda(m)\frac{t^m}{m}\bigg). \]
We recall the relationship (\cite[Equation (2.3)]{AS1}) between $L_\lambda(t)$ and the rational function $P_\lambda(t)$ defined in the Introduction:
\begin{equation}
L_\lambda(t)^{(-1)^{n+1}} = (1-q^{n+1}t)^{(-1)^n}\frac{P_\lambda(qt)}{P_\lambda(q^2t)}.
\end{equation}

We shall first prove Proposition 1.7 and then prove the last assertion of Theorem~4.26. 
We begin by reviewing the expression for $L_\lambda(t)$ that comes from Dwork's trace formula (\cite[Section 2]{AS1}).
For $s\in{\mathbb Z}$, let $L_s$ be the space of series 
\[ L_s = \bigg\{\sum_{u \in{\mathbb N}^{n+2}} c_u\gamma_0^{pu_{n+1}}x^u\mid\text{$\sum_{i=0}^n u_i-du_{n+1}=s$, $c_u\in{\mathbb C}_p$, and $\{c_u\}$ is bounded}\bigg\}. \]
For a subset $I=\{i_1,\dots,i_k\}\subseteq\{0,\dots,n+1\}$, we define
\[ L_I = \begin{cases} L_{-k} & \text{if $n+1\not\in I$,} \\ L_{d-k+1} & \text{if $n+1\in I$.} \end{cases} \]
We construct a de Rham-type complex as follows.  For $k=0,\dots,n+1$, let
\[ \Omega^k = \bigoplus_{0\leq i_1<\dots<i_k\leq n+1} L_{\{i_1,\dots,i_k\}}\,dx_{i_1}\cdots dx_{i_k} \]
Define $d:\Omega^k\to\Omega^{k+1}$ by
\[ d(\xi\,dx_{i_1}\cdots dx_{i_k}) = \sum_{i=0}^{n+1} \frac{\partial\xi}{\partial x_i}\,dx_i dx_{i_1}\cdots dx_{i_k} \]
for $\xi\in L_{\{i_1,\dots,i_k\}}$.  Define $\hat{f}_\lambda$ to be the Teichm\"uller lifting of $f_\lambda$:
\[ \hat{f}_\lambda(x_0,\dots,x_{n+1}) = \sum_{j=1}^N \hat{\lambda}_jx^{{\bf a}^+_j}\in{\mathbb Q}_p(\zeta_{q-1})[x_0,\dots,x_n]. \]
Set 
\[ h = \sum_{j=0}^\infty \gamma_jx_{n+1}^{p^j}\hat{f}^{\sigma^j}(x^{p^j}), \]
where
\[ \hat{f}^\sigma(x^p) = \sum_{j=1}^N \hat{\lambda}_j^px^{p{\bf a}^+_j}, \]
and note that $dh\in\Omega^1$.  We observe that in general, if $\omega_1\in\Omega^{k_1}$ and $\omega_2\in\Omega^{k_2}$, then $\omega_1\wedge\omega_2\in\Omega^{k_1+k_2}$.  
Let $D:\Omega^k \to\Omega^{k+1}$ be defined by
\[ D(\omega) = d\omega + dh\wedge\omega. \]
This gives a complex $(\Omega^\bullet,D)$.

We define the Frobenius operator on this complex.  From Equation (3.19) we have
\begin{equation}
\theta(\hat{\lambda},x) = \prod_{j=1}^N \theta(\hat{\lambda}_jx^{{\bf a}^+_j}). 
\end{equation}
We shall also need to consider the series $\theta_0(\hat{\lambda},x)$ defined by
\begin{equation}
\theta_0(\hat{\lambda},x) = \prod_{i=0}^{a-1} \prod_{j=1}^N \theta\big((\hat{\lambda}_jx^{{\bf a}^+_j})^{p^i}\big) = \prod_{i=0}^{a-1}\theta(\hat{\lambda}^{p^i},x^{p^i}).
\end{equation}
Define an operator $\psi$ on formal power series by
\begin{equation}
\psi\bigg(\sum_{u\in{\mathbb N}^{n+2}} c_ux^u\bigg) = \sum_{u\in{\mathbb N}^{n+2}} c_{pu}x^u.
\end{equation}
Denote by $\alpha_{\hat{\lambda}}$ the composition
\[ \alpha_{{\hat{\lambda}}} := \psi^a\circ\text{``multiplication by $\theta_0(\hat{\lambda},x)$.''} \]

We define a map $\alpha_{\hat{\lambda},\bullet}:\Omega^\bullet \to\Omega^\bullet$ by additivity and the formula
\begin{equation}
\alpha_{\hat{\lambda},k}(\xi\, dx_{i_1}\cdots dx_{i_k}) = \frac{q^{n+2-k}}{x_{i_1}\cdots x_{i_k}} \alpha_{\hat{\lambda}}(x_{i_1}\cdots x_{i_k}\xi)\,dx_{i_1}\cdots dx_{i_k},
\end{equation}
when $\xi\in L_{\{i_1,\dots,i_k\}}$.  Note that in this case $x_{i_1}\cdots x_{i_k}\xi$ and $\alpha_{\hat{\lambda}}(x_{i_1}\cdots x_{i_k}\xi)$ lie in~$L_0$.
The map $\alpha_{\hat{\lambda},\bullet}$ is a map of complexes and by the Dwork trace formula (as formulated by Robba, see \cite[Section 2]{AS1}) we have
\begin{equation}
L_{\lambda}(t) = \prod_{k=0}^{n+2} \det(I-t\alpha_{\hat{\lambda},k}\mid \Omega^k)^{(-1)^{k+1}}. 
\end{equation}
The factors on the right-hand side of (7.6) are $p$-adic entire functions.  

We shall combine (7.1) and (7.6) to get a formula for $P_\lambda(qt)$.  First of all, for $I=\{i_1,\dots,i_k\}\subseteq\{0,1,\dots,n+1\}$, let $L_0^I\subseteq L_0$ be the image of $L_I\,dx_{i_1}\cdots dx_{i_k}$ under the map $\phi$ defined by
\[ \xi\,dx_{i_1}\cdots dx_{i_k}\to x_{i_1}\cdots x_{i_k}\xi. \]
We have a commutative diagram
\[ \begin{CD}
L_I\,dx_{i_1}\cdots dx_{i_k} @>\phi>> L^I_0 \\
@V\alpha_{\hat{\lambda},k}VV @VVq^{n+2-k}\alpha_{\hat{\lambda}}V \\
L_I\,dx_{i_1}\cdots dx_{i_k} @>\phi>> L^I_0,
\end{CD} \]
in which the horizontal arrows are isomorphisms, hence there is a product decomposition
\begin{equation}
\det(I-t\alpha_{\hat{\lambda},k}\mid \Omega^k) = \prod_{|I|=k} \det(I-q^{n+2-k}t\alpha_{\hat{\lambda}}\mid L^I_0).
\end{equation}
Combining this with (7.6) gives
\begin{equation}
L_\lambda(t) = \prod_{I\subseteq\{0,1,\dots,n+1\}} \det(I-q^{n+2-|I|}t\alpha_{\hat{\lambda}}\mid L_0^I)^{(-1)^{|I|+1}}.
\end{equation}

Note that $x^u\in L_0^I$ if and only if $\sum_{i=0}^n u_i = du_{n+1}$ and $u_i>0$ for $i\in I$.  Suppose that $I\subseteq\{0,1,\dots,n\}$ and $I\neq\emptyset$.  If $x^u\in L_0^I$ then $u_{n+1}>0$ also, hence $L_0^I = L_0^{I\cup\{n+1\}}$.   It follows that for such $I$ we have
\begin{equation}
 \det(I-q^{n+1-|I|}t\alpha_{\hat{\lambda}}\mid L_0^I)=\det(I-q^{n+2-|I\cup\{n+1\}|}t\alpha_{\hat{\lambda}}\mid L_0^{I\cup\{n+1\}}).
\end{equation}
We can therefore rewrite (7.8) as
\begin{multline}
L_\lambda(t) = \frac{\det(I-q^{n+1}t\alpha_{\hat{\lambda}}\mid L_0^{\{n+1\}})}{\det(I-q^{n+2}t\alpha_{\hat{\lambda}}\mid L_0^{\emptyset})} \\
\cdot\prod_{\emptyset\neq I\subseteq\{0,1,\dots,n\}} 
\bigg(\frac{\det(I-q^{n+2-|I|}t\alpha_{\hat{\lambda}}\mid L_0^I)}{\det(I-q^{n+1-|I|}t\alpha_{\hat{\lambda}}\mid L_0^I)}\bigg)^{(-1)^{|I|+1}}.
\end{multline}
We examine the first quotient on the right-hand side of (7.10) more closely.  It is easy to see that the quotient $L_0^{\emptyset}/L_0^{\{n+1\}}$ is one-dimensional, spanned by the constant $1$, and that $\alpha_{\hat{\lambda}}$ acts on this quotient as the identity map.  We therefore have
\[ \det(I-q^{n+1}t\alpha_{\hat{\lambda}}\mid L_0^{\emptyset}) = (1-q^{n+1}t)\det(I-q^{n+1}t\alpha_{\hat{\lambda}}\mid L_0^{\{n+1\}}). \]
Equation (7.10) thus implies
\begin{multline}
L_\lambda(t)^{(-1)^{n+1}} = (1-q^{n+1}t)^{(-1)^n} \\
\cdot \frac{\prod_{I\subseteq\{0,1,\dots,n\}} 
\det(I-q^{n+2-|I|}t\alpha_{\hat{\lambda}}\mid L_0^I)^{(-1)^{n+|I|}}}{\prod_{I\subseteq\{0,1,\dots,n\}} \det(I-q^{n+1-|I|}t\alpha_{\hat{\lambda}}\mid L_0^I)^{(-1)^{n+|I|}}}.
\end{multline}
Comparing (7.1) and (7.11) now gives the desired formula:
\begin{equation}
P_\lambda(qt) = \prod_{I\subseteq\{0,1,\dots,n\}} 
\det(I-q^{n+1-|I|}t\alpha_{\hat{\lambda}}\mid L_0^I)^{(-1)^{n+1+|I|}}.
\end{equation}

For notational convenience, we set $\Gamma = \{0,1,\dots,n\}$.  
\begin{proposition}
{\bf (a)}  The entire function $\det(I-t\alpha_{\hat{\lambda}}\mid L_0^{\Gamma})$ has at most one reciprocal zero of $q$-ordinal equal to $\mu+1$, all other reciprocal zeros have $q$-ordinal $>\mu+1$.  If it has a reciprocal zero of $q$-ordinal equal to $\mu+1$, then all other reciprocal zeros have $q$-ordinal $\geq\mu+2$.  \\
{\bf (b)} The reciprocal zeros of $\det(I-q^{n+1-|I|}t\alpha_{\hat{\lambda}}\mid L_0^I)$ all have $q$-ordinal $\geq\mu+2$ for $I\subsetneq \{0,1,\dots,n\}$.
\end{proposition}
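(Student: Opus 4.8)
The plan is to read off both parts from the standard Dwork-type lower bound for the Newton polygon of the Fredholm determinant of $\alpha_{\hat{\lambda}}$ on the spaces $L_0^I$, together with two elementary observations about the exponents occurring in $L_0^I$.

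First I would fix the notion of weight. For $x^u\in L_0^I$, so that $\sum_{i=0}^n u_i=du_{n+1}$ and $u_i\ge 1$ for $i\in I$, call $u_{n+1}$ the weight of the orthonormal basis vector $\gamma_0^{pu_{n+1}}x^u$ of $L_0^I$. The estimate I want is the Hodge bound: enumerating the exponents occurring in $L_0^I$ as $u^{(1)},u^{(2)},\dots$ in order of nondecreasing weight, the reciprocal zeros $\rho_1,\rho_2,\dots$ of $\det(I-t\alpha_{\hat{\lambda}}\mid L_0^I)$, taken with multiplicity and ordered by nondecreasing $q$-ordinal, satisfy ${\rm ord}_q\,\rho_j\ge u^{(j)}_{n+1}$. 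To prove this I would factor $\alpha_{\hat{\lambda}}=\psi^a\circ\theta_0(\hat{\lambda},x)$ as the $a$-fold composition of the one-step operators $\psi\circ\theta(\hat{\lambda}^{p^i},x)$, $i=0,\dots,a-1$; using ${\rm ord}_p\,\theta_i\ge i/(p-1)$ from (3.1) together with (3.21)--(3.24) and the fact that the $\hat{\lambda}_j$ are units, a direct computation shows that in the above basis each one-step operator has every matrix entry in the column of $u$ of $p$-ordinal $\ge u_{n+1}$. For $a=1$ the Hodge bound for the Fredholm determinant then follows directly from the usual estimate for principal minors. For $a>1$ I would pass through the cyclic companion-block operator $B$ on $(L_0^I)^{\oplus a}$ assembled from the $a$ one-step maps: its reciprocal zeros obey the same weight bound (each weight now occurring $a$ times) and their $a$-th powers are the reciprocal zeros of $\det(I-t\alpha_{\hat{\lambda}}\mid L_0^I)$, which recovers the sharp bound. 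This is precisely the Newton-polygon-over-Hodge-polygon estimate of \cite{AS1}, which I would invoke.

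Next I would compute the relevant weights. Since $x^u\in L_0^I$ forces $du_{n+1}=\sum_{i=0}^n u_i\ge|I|$, every exponent of $L_0^I$ has weight $\ge\lceil|I|/d\rceil$. For $I=\Gamma=\{0,\dots,n\}$, hypothesis (1.5) gives $\lceil(n+1)/d\rceil=\mu+1$, attained only by $u=(1,\dots,1,\mu+1)$, so the second-smallest weight occurring in $L_0^\Gamma$ is $\ge\mu+2$. Part (a) is then immediate from the Hodge bound with $I=\Gamma$, where $q^{n+1-|I|}=1$: $\rho_1$ has $q$-ordinal $\ge\mu+1$ while every other reciprocal zero has $q$-ordinal $\ge\mu+2$, so at most one reciprocal zero has $q$-ordinal equal to $\mu+1$, and if one does, the remaining ones have $q$-ordinal $\ge\mu+2$. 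For part (b), with $I\subsetneq\Gamma$ and $|I|=k\le n$, the reciprocal zeros of $\det(I-q^{n+1-k}t\alpha_{\hat{\lambda}}\mid L_0^I)$ are the $q^{n+1-k}\rho_j$, of $q$-ordinal $\ge(n+1-k)+\lceil k/d\rceil$, so it remains only to verify $(n+1-k)+\lceil k/d\rceil\ge\mu+2$ for all $0\le k\le n$; writing $k=sd+r$ with $0\le r<d$ and using $n+1=d(\mu+1)$ and $d\ge 2$, this is a short case check, with equality holding for $k=n$.

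The step I expect to be the real obstacle is the Hodge bound in the case $a>1$: a naive entry-by-entry estimate of the matrix of $\alpha_{\hat{\lambda}}$ only produces a $q$-ordinal bound of the shape $c\,u_{n+1}$ with $c<1$, so one genuinely has to exploit the product structure — the companion-block operator above, or equivalently a filtration argument applied to the $a$ one-step Frobenii — in order to recover the sharp slope $u_{n+1}$. Everything else is bookkeeping with the combinatorics of $L_0^I$.
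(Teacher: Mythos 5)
Your proposal is correct and follows essentially the same route as the paper: both arguments rest on the Newton-above-Hodge lower bound for $\det(I-t\alpha_{\hat\lambda}\mid L_0^I)$ coming from the weights $u_{n+1}$ of the exponents in $L_0^I$, and then reduce to the observations that $\omega(\Gamma)=\mu+1$ is attained uniquely (for part (a)) and that $(n+1-|I|)+\lceil|I|/d\rceil\geq\mu+2$ for $|I|\leq n$ (for part (b)). The only cosmetic differences are that the paper simply cites \cite[Proposition~4.2]{AS2} for the Newton polygon estimate (your reference should be \cite{AS2}, not \cite{AS1}), whereas you sketch a proof via the column-ordinal estimate and the cyclic companion-block reduction for $a>1$; and the paper treats $I=\emptyset$ as a separate trivial case while you fold it into the monotonicity argument — both are fine.
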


\begin{proof}
Consider first the case $I=\emptyset$, i.e., the entire function $\det(I-q^{n+1}t\alpha_{\hat{\lambda}}\mid L_0^{\emptyset})$.  All reciprocal zeros are divisible by $q^{n+1}$ and $n+1\geq \mu+2$ since $n+1=d(\mu+1)$ and we are assuming $d\geq 2$.  

Now suppose that $I\neq\emptyset$ and let
\[ \omega(I) = \min\{u_{n+1}\mid \text{$x^u\in L^I_0$}\}. \]
Since $x^u\in L_0^I$ if and only if $\sum_{i=0}^n u_i = du_{n+1}$ and $u_i>0$ for $i\in I$, we have
\[ \omega(I) =  \lceil |I|/d\rceil, \]
where $\lceil z\rceil$ denotes the least integer that is $\geq z$.  

It follows from \cite[Proposition~4.2]{AS2} that the first side of the Newton polygon of  $\deg(I-t\alpha_{\hat{\lambda}}\mid L_0^I)$ has slope $\geq \omega(I)$, hence all reciprocal zeros of the entire function $\det(I-q^{n+1-|I|}t\alpha_{\hat{\lambda}}\mid L_0^I)$ have $q$-ordinal greater than or equal to 
\begin{equation}
 n+1-|I| + \lceil|I|/d\rceil.
\end{equation}

First take $I=\Gamma$, i.e., $|I|=n+1$.  In this case the hypothesis that $n+1 = d(\mu+1)$ reduces expression (7.14) to $\mu+1$.  Furthermore, since $(1,\dots,1,\mu+1)$ is the unique element~$u$ with $x^u\in L_0^{\Gamma}$ and $u_{n+1} = \mu+1$, it follows from \cite[Proposition~4.2]{AS2} that the Newton polygon of $\det(I-t\alpha_{\hat{\lambda}}\mid L_0^{\Gamma})$ has a lower bound whose first side has slope $\mu+1$ and length~1.  This implies that $\det(I-t\alpha_{\hat{\lambda}}\mid L_0^{\Gamma})$ has at most one reciprocal zero of $q$-ordinal equal to $\mu+1$ and all other reciprocal zeros have $q$-ordinal $>\mu+1$.  This proves the first sentence of part~(a) of the proposition.  If $\det(I-t\alpha_{\hat{\lambda}}\mid L_0^{\Gamma})$ has a reciprocal zero of $q$-ordinal equal to $\mu+1$, then by \cite[Proposition~4.2]{AS2} the second side of its Newton polygon has slope $\geq\mu+2$.  This proves the second sentence of part (a) of the proposition.

Next take $|I|=n$.  The expression (7.14) reduces to 
\[ 1+\bigg\lceil\frac{n}{d}\bigg\rceil = 1+\bigg\lceil\mu+1-\frac{1}{d}\bigg\rceil = \mu+2 \]
since $d\geq 2$.  Furthermore, expression (7.14) cannot decrease when $|I|$ decreases, which proves part (b) of the proposition.
\end{proof}

Recall from the Introduction that we write 
\[ P_\lambda(t) = {P_\lambda^{(1)}(t)}/{P_\lambda^{(2)}(t)}, \]
where $P^{(1)}_\lambda(t)$ and $P_\lambda^{(2)}(t)$ are relatively prime polynomials with integer coefficients and constant term 1 which satisfy
\[ P_\lambda^{(1)}(q^{-\mu}t),P_\lambda^{(2)}(q^{-\mu}t)\in 1+t{\mathbb Z}[t]. \]
Proposition 7.13, together with Equation (7.12), shows that
\[ P_\lambda^{(2)}(q^{-\mu}t) \equiv 1\pmod{q} \]
and that $P_\lambda^{(1)}(q^{-\mu}t) \pmod{p}$ has degree at most 1 in $t$.
To complete the proof of Proposition 1.7 it suffices, by Proposition 7.13(a), to show that 
\begin{equation}
{\rm Tr}(\alpha_{\hat{\lambda}}\mid L_0^{\Gamma})\equiv q^{\mu+1}\prod_{i=0}^{a-1}\big((-1)^{\mu+1}H(\hat{\lambda}^{p^i})\big)\pmod{pq^{\mu+1}}.
\end{equation}
Using Lemma 5.7, one sees that (7.15) is equivalent to the following assertion.
\begin{proposition} 
For $\lambda\in({\mathbb F}_q^\times)^N$, we have
\[ {\rm Tr}(\alpha_{\hat{\lambda}}\mid L_0^\Gamma) \equiv \prod_{i=0}^{a-1}\theta_{-(p-1){\bf b}}(\hat{\lambda}^{p^i})\pmod{pq^{\mu+1}}. \]
\end{proposition}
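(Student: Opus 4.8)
The plan is to compute ${\rm Tr}(\alpha_{\hat\lambda}\mid L_0^\Gamma)$ directly from the Dwork trace formula and then discard every contribution that is $\equiv 0\pmod{pq^{\mu+1}}$. Since $\alpha_{\hat\lambda}=\psi^a\circ(\text{multiplication by }\theta_0(\hat\lambda,x))$ is completely continuous on $L_0^\Gamma$, its trace is the sum of its diagonal entries in the orthonormal monomial basis $\{\gamma_0^{pu_{n+1}}x^u\}$ of $L_0^\Gamma$; a direct check with the definitions shows the normalizing powers of $\gamma_0$ cancel on the diagonal and that the $x^u$-entry is the coefficient of $x^{(q-1)u}$ in $\theta_0(\hat\lambda,x)$, so
\[
{\rm Tr}(\alpha_{\hat\lambda}\mid L_0^\Gamma)=\sum_{x^u\in L_0^\Gamma}\theta_{0,(q-1)u}(\hat\lambda),
\]
where $\theta_{0,v}(\Lambda)$ denotes the coefficient of $x^v$ in $\theta_0(\Lambda,x)=\prod_{i=0}^{a-1}\prod_{j=1}^N\theta(\Lambda_j^{p^i}x^{p^i{\bf a}^+_j})$. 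Expanding each factor by $\theta(t)=\sum_m\theta_mt^m$, the coefficient $\theta_{0,(q-1)u}(\hat\lambda)$ becomes a sum, over all families $(k^{(i)}_j)_{0\le i\le a-1,\,1\le j\le N}\in{\mathbb N}^{aN}$ with $\sum_{i,j}p^ik^{(i)}_j{\bf a}^+_j=(q-1)u$, of the monomials $\prod_{i,j}\theta_{k^{(i)}_j}\hat\lambda_j^{p^ik^{(i)}_j}$.

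Next I would isolate the \emph{distinguished} terms: those with $u=-{\bf b}$ for which, for every $i$, $(k^{(i)}_j)_j\in V$ (the set introduced just before Lemma 5.7), equivalently $\sum_jk^{(i)}_j{\bf a}^+_j=-(p-1){\bf b}$, which forces $k^{(i)}_j\le p-1$. Using $\theta_k=\gamma_0^k/k!$ for $k\le p-1$ and the identity $\theta_{-(p-1){\bf b}}(\Lambda)=\sum_{v\in V}\bigl(\prod_j\theta_{v_j}\bigr)\Lambda^v$, one sees (grouping the sum over $(v^{(0)},\dots,v^{(a-1)})\in V^a$) that the sum of all distinguished terms factors exactly as $\prod_{i=0}^{a-1}\theta_{-(p-1){\bf b}}(\hat\lambda^{p^i})$. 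So it remains to prove that every \emph{other} term has $p$-ordinal $\ge a(\mu+1)+1={\rm ord}_p(pq^{\mu+1})$.

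For this estimate set $w^{(i)}=\sum_jk^{(i)}_j{\bf a}^+_j\in{\mathbb N}A$, so that $w^{(i)}_{n+1}=\sum_jk^{(i)}_j$, $\sum_{\ell=0}^nw^{(i)}_\ell=d\,w^{(i)}_{n+1}$ (since ${\mathbb N}A$ lies in that hyperplane), and $\sum_ip^iw^{(i)}=(q-1)u$. By estimate (3.1) the term $\prod_{i,j}\theta_{k^{(i)}_j}\hat\lambda_j^{p^ik^{(i)}_j}$ has $p$-ordinal $\ge\frac1{p-1}\sum_{i,j}k^{(i)}_j=\frac1{p-1}\sum_iw^{(i)}_{n+1}$. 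Reducing the $(n+1)$-st coordinate of $\sum_ip^iw^{(i)}=(q-1)u$ modulo $p-1$ shows $\sum_iw^{(i)}_{n+1}$ is a multiple of $p-1$; likewise each $\sum_iw^{(i)}_\ell$, $0\le\ell\le n$, is a multiple of $p-1$. The arithmetic core is the elementary lemma: \emph{for $m\ge1$, any representation $\sum_{i=0}^{a-1}c_ip^i=(p^a-1)m$ with $c_i\in{\mathbb N}$ satisfies $\sum_ic_i\ge a(p-1)$, with equality only when $m=1$ and $c_i=p-1$ for all $i$.} (For $m=1$ this is minimality and uniqueness of the base-$p$ digits of $p^a-1$; for $m\ge2$ one has $(p^a-1)m\ge p^a$ and $\sum_ic_i\ge\bigl\lfloor(p^a-1)m/p^{a-1}\bigr\rfloor+s_p\bigl((p^a-1)m\bmod p^{a-1}\bigr)\ge(a+m-1)(p-1)>a(p-1)$, where $s_p$ is the base-$p$ digit sum and one uses $s_p(x-y)\ge s_p(x)-s_p(y)$ applied to $p^{a-1}-m$.) Applying the lemma in each coordinate $\ell\le n$ with $m=u_\ell\ge1$ gives $\sum_iw^{(i)}_\ell\ge a(p-1)$, hence
\[
\sum_iw^{(i)}_{n+1}=\tfrac1d\sum_{\ell=0}^n\sum_iw^{(i)}_\ell\ \ge\ \tfrac{n+1}d\,a(p-1)=a(p-1)(\mu+1).
\]
Equality here forces $\sum_iw^{(i)}_\ell=a(p-1)$ for every $\ell\le n$, which by the equality case of the lemma forces $u_\ell=1$ and $w^{(i)}_\ell=p-1$ for all $i$ and all $\ell\le n$; then $u=-{\bf b}$ and $w^{(i)}=-(p-1){\bf b}$ for each $i$, i.e.\ the term is distinguished. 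Thus any non-distinguished term has $\sum_iw^{(i)}_{n+1}>a(p-1)(\mu+1)$, hence (being a multiple of $p-1$) $\ge a(p-1)(\mu+1)+(p-1)$, so its $p$-ordinal is $\ge a(\mu+1)+1$. Summing over $x^u\in L_0^\Gamma$ gives ${\rm Tr}(\alpha_{\hat\lambda}\mid L_0^\Gamma)\equiv\prod_{i=0}^{a-1}\theta_{-(p-1){\bf b}}(\hat\lambda^{p^i})\pmod{pq^{\mu+1}}$, which is the Proposition.

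The main obstacle is the combinatorial digit-sum lemma, especially pinning down its equality case; granting that, the rest is bookkeeping with the trace formula and the Section 3 estimates. Two secondary points also require care: justifying that the trace of the completely continuous operator $\alpha_{\hat\lambda}$ on the infinite-dimensional $L_0^\Gamma$ is the sum of its diagonal entries (standard; convergence follows from the sharper bound $\sum_iw^{(i)}_{n+1}\ge(p-1)\bigl((a-1)(\mu+1)+u_{n+1}\bigr)$, which shows the diagonal entries tend to $0$), and handling the range $u_\ell\ge p^{a-1}$ in the lemma, where $\sum_ic_i>a(p-1)$ is immediate since then $\bigl\lfloor(p^a-1)m/p^{a-1}\bigr\rfloor\ge p^a-1$.
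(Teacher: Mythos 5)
Your proof is correct, and after the common Dwork-trace-formula reduction and the isolation of the distinguished terms (which match the paper's (7.17)--(7.20)), it bounds the non-distinguished contributions by a genuinely different route. The paper, given $u^{(0)},\dots,u^{(a-1)}\in{\mathbb N}A$ with $\sum_i p^iu^{(i)}=(q-1)w$, constructs a cyclic sequence $w^{(0)},\dots,w^{(a)}\in U$ with $u^{(i)}=pw^{(i+1)}-w^{(i)}$ and $w^{(0)}=w^{(a)}=w$; summing gives $\sum_i u^{(i)}=(p-1)\sum_i w^{(i)}$, and since each $w^{(i)}\in U$ forces $w^{(i)}_{n+1}\geq\mu+1$ with equality only at $w^{(i)}=(1,\dots,1,\mu+1)$, the estimate and the equality case are immediate. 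You instead isolate a scalar digit-sum lemma, apply it in each coordinate $\ell\in\{0,\dots,n\}$, and then reassemble via the hyperplane relation $\sum_\ell w^{(i)}_\ell = dw^{(i)}_{n+1}$. The two are closely related: the paper's recursion, read one coordinate at a time via $w^{(i+1)}_\ell=(w^{(i)}_\ell+u^{(i)}_\ell)/p$, is precisely a clean proof of your lemma (the paper's induction gives integrality and positivity, and then $\sum_i u^{(i)}_\ell=(p-1)\sum_i w^{(i)}_\ell\geq a(p-1)$, with equality iff every $w^{(i)}_\ell=1$); it avoids the carry-reduction step implicit in your greedy-minimizes-digit-sum claim and the case analysis for your sharper $(a+m-1)(p-1)$ bound, which in any event you do not need, since $\sum_ic_i\equiv0\pmod{p-1}$ already promotes strict inequality to $\geq(a+1)(p-1)$. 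The paper's version also keeps the vectors $w^{(i)}$ visibly inside $U$ at every step, doing all $n+2$ coordinates at once, whereas you reassemble from the first $n+1$ coordinates; both are correct, and your formulation has the merit of packaging the combinatorial core as a self-contained statement about base-$p$ representations.
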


\begin{proof}
Consider the series
\[ \theta_0(\hat{\lambda},x) = \sum_{w\in{\mathbb N}A} \theta_{0,w}(\hat{\lambda})x^w. \]
By (7.3) we have
\begin{equation}
\theta_{0,w}(\hat{\lambda}) = \sum_{\substack{u^{(0)},\dots,u^{(a-1)}\in{\mathbb N}A\\ \sum_{i=0}^{a-1} p^iu^{(i)} = w}} \prod_{i=0}^{a-1} \theta_{u^{(i)}}(\hat{\lambda}^{p^i}).
\end{equation}
Let $U\subseteq{\mathbb N}^{n+2}$ be the set of all exponents $u$ such that $x^u\in L_0^\Gamma$.  For $w\in U$, a direct calculation shows that
\begin{equation}
\alpha_{\hat{\lambda}}(x^w) = \sum_{u\in U} \theta_{0,qu-w}(\hat{\lambda})x^u.
\end{equation}
It then follows from the Dwork Trace Formula that
\begin{equation}
{\rm Tr}(\alpha_{\hat{\lambda}}\mid L_0^\Gamma) = \sum_{w\in U} \theta_{0,(q-1)w}(\hat{\lambda}).
\end{equation}

Equation (7.17) gives
\begin{equation}
\theta_{0,(q-1)w}(\hat{\lambda}) = \sum_{\substack{u^{(0)},\dots,u^{(a-1)}\in{\mathbb N}A\\ \sum_{i=0}^{a-1} p^iu^{(i)} = (q-1)w}} \prod_{i=0}^{a-1} \theta_{u^{(i)}}(\hat{\lambda}^{p^i}).
\end{equation}
It follows from (3.21) and (3.23) that
\begin{multline}
{\rm ord}_p\:\theta_{0,(q-1)w}(\hat{\lambda}) \geq \\
\min\bigg\{\sum_{i=0}^{a-1} \frac{u^{(i)}_{n+1}}{p-1}\mid \text{$u^{(0)},\dots,u^{(a-1)}\in{\mathbb N}A$ and $\sum_{i=0}^{a-1} p^iu^{(i)} = (q-1)w$}\bigg\}.
\end{multline}
We prove Proposition 7.16 by studying this estimate for $w\in U$.

Fix $u^{(0)},\dots,u^{(a-1)}\in{\mathbb N}A$ with
\begin{equation}
\sum_{i=0}^{a-1} p^iu^{(i)}= (q-1)w
\end{equation}
and $w\in U$.  We define inductively a sequence $w^{(0)},\dots,w^{(a)}\in U$ such that 
\begin{equation}
u^{(i)} = pw^{(i+1)}-w^{(i)}\quad\text{for $i=0,\dots,a-1$.}
\end{equation}
First of all, take $w^{(0)} = w$.  Eq.~(7.22) shows that $u^{(0)}+w^{(0)}=pw^{(1)}$ for some $w^{(1)}\in{\mathbb Z}^{n+2}$; since $u^{(0)}\in{\mathbb N}A$ and $w^{(0)}\in U$ we conclude that $w^{(1)}\in U$.  Suppose that for some $0< k\leq a-1$ we have defined $w^{(0)},\dots,w^{(k)}\in U$ satisfying (7.23) for $i=0,\dots,k-1$.  Substituting $pw^{(i+1)}-w^{(i)}$ for $u^{(i)}$ for $i=0,\dots,k-1$ in (7.22) gives
\begin{equation}
-w^{(0)}+ p^kw^{(k)} + \sum_{i=k}^{a-1} p^iu^{(i)} = p^aw-w.
\end{equation}
Since $w^{(0)}=w$, we can divide this equation by $p^k$ to get $w^{(k)} + u^{(k)} = pw^{(k+1)}$ for some $w^{(k+1)}\in{\mathbb Z}^{n+2}$.  Since $u^{(k)}\in{\mathbb N}A$ and (by induction) $w^{(k)}\in U$, we conclude that $w^{(k+1)}\in U$.  This completes the inductive construction.  Note that in the special case $k=a-1$, this computation gives $w^{(a)} = w$.

Summing Eq.~(7.23) over $i=0,\dots,a-1$ and using $w^{(0)} = w^{(a)} = w$ gives
\begin{equation}
\sum_{i=0}^{a-1} u^{(i)} = (p-1)\sum_{i=0}^{a-1} w^{(i)},
\end{equation}
hence
\begin{equation}
\sum_{i=0}^{a-1} \frac{u^{(i)}_{n+1}}{p-1} = \sum_{i=0}^{a-1} w^{(i)}_{n+1}.
\end{equation}
Since $w^{(i)}\in U$, we have
\begin{equation}
\begin{cases} w^{(i)}_{n+1} =\mu+1 & \text{if $w^{(i)} = (1,\dots,1,\mu+1)$,} \\ w^{(i)}_{n+1} \geq\mu+2 & \text{if $w^{(i)}\neq (1,\dots,1,\mu+1)$.} \end{cases}
\end{equation}
It now follows from (7.26) that
\begin{equation}
\sum_{i=0}^{a-1}\frac{u^{(i)}_{n+1}}{p-1}\begin{cases} = a(\mu+1) & \text{if $w^{(i)} = (1,\dots,1,\mu+1)$ for $i=0,\dots,a-1$,} \\ \geq a(\mu+1) + 1 & \text{otherwise,} \end{cases}
\end{equation}
so by (7.23), $\sum_{i=0}^{a-1}{u^{(i)}_{n+1}}/{(p-1)} = a(\mu+1)$ if and only if $u^{(i)} = (p-1)(1,\dots,1,\mu+1)$ for all~$i$.

By (7.21), this implies that if $w\neq(1,\dots,1,\mu+1)$, then
\[ \theta_{0,(q-1)w}(\hat{\lambda})\equiv 0\pmod{pq^{\mu+1}}. \]
If $w=(1,\dots,1,\mu+1)$, this implies by (7.20) that
\[ \theta_{0,(q-1)(1,\dots,1,\mu+1)}(\hat{\lambda})\equiv \prod_{i=0}^{a-1}\theta_{(p-1)(1,\dots,1,\mu+1)}(\hat{\lambda}^{p^i}) \pmod{pq^{\mu+1}}. \]
Since $-{\bf b} = (1,\dots,1,\mu+1)$, Equation (7.19) now implies the proposition.
\end{proof}

Let $\lambda\in({\mathbb F}_q^\times)^N$.  In the course of proving Proposition~1.7, we have shown that $\bar{H}(\lambda)\neq 0$ is a necessary and sufficient condition for $\det(I-t\alpha_{\hat{\lambda}}\mid L_0^\Gamma)$ to have a unique reciprocal zero of  $q$-ordinal equal to~$\mu+1$.  To prove the last assertion of Theorem~4.26, it suffices by (7.12) and Proposition 7.13 to prove the following result.
\begin{theorem}
If $\lambda\in({\mathbb F}_q^\times)^N$ and $\bar{H}(\lambda)\neq 0$, then $q^{\mu+1}\prod_{i=0}^{a-1} {\mathcal G}(\hat{\lambda}^{p^i})$ is an eigenvalue of $\alpha_{\hat{\lambda}}$ on $L_0^\Gamma$.
\end{theorem}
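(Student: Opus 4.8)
The plan is to derive Theorem 7.30 from the eigenvector identity (5.16) by specializing the parameter $\Lambda$ to the Teichm\"uller point $\hat\lambda$, iterating $a$ times, and reinterpreting the resulting identity of Laurent series in $x$ as an eigenvalue statement for the transpose of $\alpha_{\hat\lambda}$ on $L_0^\Gamma$. To begin, recall that by Proposition 5.15 the series $G(\Lambda,x)/G(\Lambda)$ lies in $T'$, so each coefficient $G_u(\Lambda)/G(\Lambda)$ lies in $R_u'$ and is analytic on $\mathcal D_+$, with $|G_u(\Lambda)/G(\Lambda)|\le 1$ there; and by Corollary 5.17, $\mathcal G(\Lambda)=G(\Lambda)/G(\Lambda^p)$ lies in $R_0'$ and is analytic on $\mathcal D_+$. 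Since $\bar H(\lambda)\neq 0$, the points $\hat\lambda^{p^i}$ all lie in $\mathcal D_+$ (as in the first part of Theorem 4.26). Writing $\alpha^*\big(G(\Lambda,x)/G(\Lambda)\big)(\Lambda,x)=\delta_-\big(\theta(\Lambda,x)\,G(\Lambda^p,x^p)/G(\Lambda^p)\big)$ and evaluating (5.16) at $\Lambda=\hat\lambda^{p^i}$ gives, for $g_i(x):=\big(G(\Lambda,x)/G(\Lambda)\big)\big|_{\Lambda=\hat\lambda^{p^i}}$,
\[ \delta_-\big(\theta(\hat\lambda^{p^i},x)\,g_{i+1}(x^p)\big)=p^{\mu+1}\mathcal G(\hat\lambda^{p^i})\,g_i(x), \]
where $g_{i+a}=g_i$ because $\hat\lambda^{p^a}=\hat\lambda$. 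Set $\Xi(x):=g_0(x)=\sum_{u\in M_-}\big(G_u(\hat\lambda)/G(\hat\lambda)\big)\gamma_0^{u_{n+1}}x^u$; the estimates (3.21), (3.23) together with the bound on $|G_u/G|$ ensure that all series appearing below converge and are supported on $M_-$ after truncation.

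Next I would iterate. Substituting the identity for index $i+1$ (with $x\mapsto x^p$) into the one for index $i$, using that $\theta(\hat\lambda^{p^j},x)$ has only nonnegative exponents in $x$ so that nested applications of $\delta_-$ collapse, and pulling out scalars, and running from $i=a-1$ down to $i=0$ with $g_a=\Xi$ and the factorization $\theta_0(\hat\lambda,x)=\prod_{i=0}^{a-1}\theta(\hat\lambda^{p^i},x^{p^i})$ of (7.3), one obtains
\[ \delta_-\big(\theta_0(\hat\lambda,x)\,\Xi(x^q)\big)=c\,\Xi(x),\qquad c:=q^{\mu+1}\prod_{i=0}^{a-1}\mathcal G(\hat\lambda^{p^i}), \]
an identity of Laurent series supported on $M_-$.

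Finally I would pass to $L_0^\Gamma$ by duality. Note that $U\cap M=-M_-$, and consider the bilinear pairing $\big\langle\sum_u a_ux^u,\sum_w b_wx^w\big\rangle=\sum_u a_ub_{-u}$ between $L_0^\Gamma$ (spanned by the monomials $x^u$, $u\in U$) and Laurent series supported on $M_-$. Since $u_{n+1}\ge\mu+1$ with only finitely many $u\in U$ for each value of $u_{n+1}$, and the weights $\gamma_0^{(p-1)u_{n+1}}$ tend to $0$, the series $\Xi$ defines a bounded functional $\ell_\Xi$ on $L_0^\Gamma$, and $\ell_\Xi$ is nonzero because its value on $x^{-{\bf b}}\in L_0^\Gamma$ is $(\hat\lambda_1\cdots\hat\lambda_{\mu+1})^{-1}\gamma_0^{-\mu-1}\neq 0$. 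Using the formula $\alpha_{\hat\lambda}(x^w)=\sum_{u\in U}\theta_{0,qu-w}(\hat\lambda)x^u$ of (7.17) and comparing the coefficient of $x^{-w}$ on the two sides of the displayed identity (the contributions with $w\notin M$ vanish on both sides for support reasons), one gets $\langle\alpha_{\hat\lambda}(x^w),\Xi\rangle=c\,\langle x^w,\Xi\rangle$ for every $w\in U$, i.e.\ $\ell_\Xi\circ\alpha_{\hat\lambda}=c\,\ell_\Xi$; thus $\ell_\Xi$ is an eigenvector of the transpose of $\alpha_{\hat\lambda}$ with eigenvalue $c$. Since $\alpha_{\hat\lambda}$ is completely continuous on $L_0^\Gamma$, its characteristic series $\det(I-t\alpha_{\hat\lambda}\mid L_0^\Gamma)$ coincides with that of its transpose (both being $\sum_m(-t)^m\operatorname{tr}(\wedge^m\cdot)$, which is transpose invariant); and $c\neq 0$ because $p^{\mu+1}\mathcal G(\Lambda)$ is invertible in $R_0'$ — apply Lemma 5.10 to $G(\Lambda,x)/G(\Lambda)\in T'$ and use (5.16), which gives $\operatorname{ord}_q c=\mu+1$. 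Hence $c=q^{\mu+1}\prod_{i=0}^{a-1}\mathcal G(\hat\lambda^{p^i})$ is an eigenvalue of $\alpha_{\hat\lambda}$ on $L_0^\Gamma$, which together with (7.12) and Proposition 7.13 yields the last assertion of Theorem 4.26.

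The main obstacle I anticipate is the third step: one must choose the pairing so that it is simultaneously well defined (the growth of the coefficients of $\Xi$ must be exactly offset by the $\gamma_0^{pu_{n+1}}$-weighting built into $L_0^\Gamma$), compatible with the two normalizations in play (the $S'$-norm governing $G(\Lambda,x)/G(\Lambda)$ versus that weighting), and robust enough that eigenvalues of the transpose really are eigenvalues of $\alpha_{\hat\lambda}$; and one must verify that the telescoping composition of the $a$ specialized copies of (5.16) is legitimate rather than merely formal, which is precisely what the convergence estimates of Section 3 are needed for.
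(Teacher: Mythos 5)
Your proposal is correct and follows essentially the same path as the paper's proof: specialize the eigenvector identity (5.16) at Teichm\"uller points, iterate to produce an eigenvector of the adjoint operator $\alpha^*_{\hat\lambda}$ on the dual Banach space of Laurent series supported on $\hat M_-$, then invoke Serre's theorem that completely continuous operators and their adjoints have the same characteristic series. The only differences are organizational (the paper iterates (5.16) first in $S'$ and then specializes once, whereas you specialize and then telescope $a$ times, and the paper packages the duality step explicitly as the adjoint pair $(\alpha_{\hat\lambda},\alpha^*_{\hat\lambda})$ together with \cite[Proposition~15]{S}), and the $\delta_-$-collapsing argument and the decay of $\gamma_0^{-(p-1)u_{n+1}}$ that you flag as the main obstacles are exactly the points the paper checks.
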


Before beginning the proof of Theorem 7.29, we give an alternate description of $\det(I-t\alpha_{\hat{\lambda}}\mid L_0^\Gamma)$.  Let 
\begin{align*}
\hat{M}_- &= \bigg\{ u=(u_0,\dots,u_{n+1})\in({\mathbb Z}_{<0})^{n+2}\mid \sum_{i=0}^n u_i = du_{n+1}\bigg\}, \\
\hat{M}_+ &= \bigg\{ u=(u_0,\dots,u_{n+1})\in({\mathbb Z}_{>0})^{n+2}\mid \sum_{i=0}^n u_i = du_{n+1}\bigg\}.
\end{align*}
Set
\[ B = \bigg\{ \xi^* = \sum_{u\in \hat{M}_-} c_u^*\gamma_0^{pu_{n+1}}x^u\mid \text{$c_u^*\to 0$ as $u\to-\infty$}\bigg\}, \]
a $p$-adic Banach space with norm $\lvert\xi^*\rvert = \sup_{u\in\hat{M}_-}\{\lvert c_u^*\rvert\}$.  We define a pairing $\langle,\rangle :B\times L_0^\Gamma\to{\mathbb C}_p$ as follows.  If $\xi = \sum_{u\in\hat{M}_+} c_u\gamma_0^{pu_{n+1}}x^u\in L_0^\Gamma$ and $\xi^* = \sum_{u\in\hat{M}_-} c_u^*\gamma_0^{pu_{n+1}}x^u\in B$, define
\[ \langle\xi^*,\xi\rangle = \sum_{u\in\hat{M}_+} c_uc^*_{-u}, \]
the constant term of the product $\xi^*\xi$.  This pairing identifies $B$ with the dual space of $L_0^\Gamma$, the space of continuous linear mappings from $L_0^\Gamma$ to ${\mathbb C}_p$ (see Serre\cite[Proposition 3]{S}).  We extend the definition of the mapping $\Phi$ defined in the proof of Proposition~6.9 by setting
\[ \Phi\bigg(\sum_{u\in{\mathbb Z}^n} c_ux^u\bigg) = \sum_{u\in{\mathbb Z}^n} c_ux^{pu}. \]
Consider the formal composition $\alpha^*_{\hat{\lambda}} = \delta_-\circ\text{``multiplication by $\theta_0(\hat{\lambda},x)$''}\circ\Phi^a$.
\begin{proposition}
The operator $\alpha^*_{\hat{\lambda}}$ is an endomorphism of $B$ which is adjoint to $\alpha_{\hat{\lambda}}:L_0^\Gamma\to L_0^\Gamma$.
\end{proposition}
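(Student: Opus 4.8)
Below is how I would approach the proof.

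The statement has two halves: first, that the formal composition $\alpha^*_{\hat\lambda}=\delta_-\circ(\text{multiplication by }\theta_0(\hat\lambda,x))\circ\Phi^a$ really does define a continuous endomorphism of the Banach space $B$; second, that this endomorphism is the adjoint of $\alpha_{\hat\lambda}\colon L_0^\Gamma\to L_0^\Gamma$ for the pairing $\langle\,,\,\rangle$. The plan is to settle the first half by a $p$-adic estimate of the same flavour as the one used in the proof of Proposition~7.16, and then to deduce adjointness by a purely formal manipulation of the constant-term pairing.

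For the first half I would compute, for $\xi^*=\sum_{w\in\hat M_-}c^*_w\gamma_0^{pw_{n+1}}x^w\in B$, the coefficient of the standard basis vector $\gamma_0^{pv_{n+1}}x^v$, $v\in\hat M_-$, in $\alpha^*_{\hat\lambda}(\xi^*)$. Since $\theta_0(\hat\lambda,x)=\sum_{z\in\mathbb N A}\theta_{0,z}(\hat\lambda)x^z$ has all its exponents in $\mathbb N A$, applying $\Phi^a$, multiplying, and truncating with $\delta_-$ shows this coefficient is $\sum_{w} c^*_w\,\theta_{0,z}(\hat\lambda)\,\gamma_0^{p(w_{n+1}-v_{n+1})}$, summed over $w\in\hat M_-$ with $z:=v-qw\in\mathbb N A$, where the relation $v=qw+z$ gives $w_{n+1}-v_{n+1}=(q-1)\lvert w_{n+1}\rvert-z_{n+1}$ and $z_{n+1}=v_{n+1}+q\lvert w_{n+1}\rvert$. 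From the expansion of $\theta_{0,z}(\hat\lambda)$ used in the proof of Proposition~7.16 together with the bounds (3.21), (3.23), one gets ${\rm ord}_p\,\theta_{0,z}(\hat\lambda)\ge\frac1{p-1}\cdot\frac{z_{n+1}}{p^{a-1}}$ (the unweighted exponent sum $\sum_{i=0}^{a-1}u^{(i)}_{n+1}$ being smallest when all the weight sits on $u^{(a-1)}$); substituting this and simplifying, the $p$-ordinal of the general term is at least ${\rm ord}_p c^*_w+\frac{p(q-1)}{q(p-1)}\lvert v_{n+1}\rvert$, which is strictly positive and tends to $\infty$ as $v\to-\infty$. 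Since moreover for each fixed value of $v_{n+1}$ only finitely many $v\in\hat M_-$ occur, the sum converges, $\lvert\alpha^*_{\hat\lambda}(\xi^*)\rvert\le\lvert\xi^*\rvert$, and $\alpha^*_{\hat\lambda}$ is a completely continuous endomorphism of $B$ — this estimate being essentially the transpose of the one that makes $\alpha_{\hat\lambda}$ completely continuous on $L_0^\Gamma$. I expect this step, with its bookkeeping of the (large, since $u_{n+1}<0$) normalization factors $\gamma_0^{pu_{n+1}}$, to be the only genuine obstacle; the rest is formal.

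For adjointness, fix $\xi\in L_0^\Gamma$ and $\xi^*\in B$. By the definition of $\langle\,,\,\rangle$, $\langle\alpha^*_{\hat\lambda}(\xi^*),\xi\rangle$ is the constant term of $\delta_-\bigl(\theta_0(\hat\lambda,x)\Phi^a(\xi^*)\bigr)\cdot\xi$. Since $\xi$ has all of its exponents in $\hat M_+$, the monomials discarded by $\delta_-$ contribute nothing to that constant term, so $\langle\alpha^*_{\hat\lambda}(\xi^*),\xi\rangle$ equals the constant term of $\theta_0(\hat\lambda,x)\Phi^a(\xi^*)\cdot\xi=\Phi^a(\xi^*)\cdot\bigl(\theta_0(\hat\lambda,x)\xi\bigr)$, multiplication being commutative. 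Now $\Phi^a$ and $\psi^a$ are adjoint for the constant-term pairing: for formal Laurent series $\sigma=\sum_u\sigma_ux^u$ and $\tau=\sum_u\tau_ux^u$, the constant terms of $\Phi^a(\sigma)\cdot\tau$ and of $\sigma\cdot\psi^a(\tau)$ are both $\sum_u\sigma_u\tau_{-qu}$ (the sums converging by the decay of the relevant coefficients). Hence $\langle\alpha^*_{\hat\lambda}(\xi^*),\xi\rangle$ equals the constant term of $\xi^*\cdot\psi^a\bigl(\theta_0(\hat\lambda,x)\xi\bigr)=\xi^*\cdot\alpha_{\hat\lambda}(\xi)$, namely $\langle\xi^*,\alpha_{\hat\lambda}(\xi)\rangle$, using $\alpha_{\hat\lambda}(\xi)\in L_0^\Gamma$. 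This proves $\alpha^*_{\hat\lambda}$ is adjoint to $\alpha_{\hat\lambda}$; alternatively, once this identity is verified on the dense subspace of Laurent polynomials (where everything is finite), the first half follows as well, since the pairing identifies $B$ with $(L_0^\Gamma)^*$ and $\alpha_{\hat\lambda}$ is continuous.
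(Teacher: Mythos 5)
Your argument is correct but takes a somewhat different route from the paper's. The paper first reduces to the single factor $\delta_-\circ\theta(\hat\lambda,x)\circ\Phi$ (using that $\alpha^*_{\hat\lambda}$ is an $a$-fold composition of such operators), and for that one-step operator derives the clean estimate ${\rm ord}_p\,\theta_w(\hat\lambda)\gamma_0^{p(v_{n+1}-u_{n+1})}\ge -u_{n+1}$ directly from $(3.23)$ and $w+pv=u$; it then stops there, leaving the adjointness half as a formal verification. You instead estimate the $a$-fold composition in one blow, obtaining the lower bound $\frac{q-1}{p^{a-1}(p-1)}\lvert v_{n+1}\rvert$, which is weaker (in particular you only get $\lvert\alpha^*_{\hat\lambda}\xi^*\rvert\le\lvert\xi^*\rvert$ rather than $\le\lvert q^{\mu+1}\xi^*\rvert$), but entirely sufficient for the claim at hand; this saves the bookkeeping of the identity $\delta_-\circ M\circ\delta_-=\delta_-\circ M$ needed to justify the paper's one-step reduction. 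You also spell out the adjointness half — observing that $\delta_-$ discards only monomials that cannot pair with $L_0^\Gamma$ to give a constant term, and that $\Phi^a$ and $\psi^a$ are transposes for the constant-term pairing — which the paper omits. Both ingredients are right, and your proof is arguably more self-contained. One small caution on your closing remark: to identify the abstract Banach-space adjoint of $\alpha_{\hat\lambda}$ with the concrete formal operator $\delta_-\circ\theta_0(\hat\lambda,x)\circ\Phi^a$, one still needs to know that the latter converges on all of $B$ (your first half), so the ``alternative'' route does not really dispense with the estimate.
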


\begin{proof}
Since $\alpha^*_{\hat{\lambda}}$ is the $a$-fold composition of the operators $\delta_-\circ\theta(\hat{\lambda}^{p^i},x))\circ\Phi$ and $\alpha_{\hat{\lambda}}$ is the $a$-fold composition of the operators $\psi\circ\theta(\hat{\lambda}^{p^i},x)$ for $i=0,\dots,a-1$, it suffices to check that $\delta_-\circ\theta(\hat{\lambda},x)\circ\Phi$ is an endomorphism of $B$ adjoint to $\psi\circ\theta(\hat{\lambda},x):
L_0^\Gamma\to L_0^\Gamma$.  Let $\xi^*(x) = \sum_{v\in\hat{M}_-} c_v^*\gamma_0^{pv_{n+1}}x^v\in B$.  The proof that the product $\theta(\hat{\lambda},x)\xi^*(x^p)$ is well-defined is analogous to the proof of convergence of~(5.1).  We have
\[ \delta_-\big(\theta(\hat{\lambda},x)\xi^*(x^p)\big) = \sum_{u\in\hat{M}_-} C_u^*\gamma_0^{pu_{n+1}}x^u, \]
where
\begin{equation}
C_u^* = \sum_{w+pv = u} \theta_w(\hat{\lambda}) c_v^*\gamma_0^{p(v_{n+1}-u_{n+1})}.
\end{equation}
Note that by (3.23)
\begin{equation}
{\rm ord}_p\:\theta_w(\hat{\lambda}) \gamma_0^{p(v_{n+1}-u_{n+1})}\geq \frac{w_{n+1}}{p-1} + \frac{pv_{n+1}}{p-1} -\frac{pu_{n+1}}{p-1} = -u_{n+1} 
\end{equation}
since $w+pv=u$.  Since $c_v^*\to0$ as $v\to-\infty$, this implies that the series on the right-hand side of (7.31) converges.  Furthermore, the estimate (7.32) then shows that $C_u^*\to0$ as $u\to-\infty$.  We conclude that $\delta_-\big(\theta(\hat{\lambda},x)\xi^*(x^p)\big)\in B$.  In fact, (7.32)  implies 
\[ \lvert\delta_-\big(\theta(\hat{\lambda},x)\xi^*(x^p)\big)\rvert\leq \lvert p^{\mu+1}\xi^*(x)\rvert \]
since $u_{n+1}\leq -(\mu+1)$ for all $u\in M_-$.
\end{proof}

\begin{proof}[Proof of Theorem $7.29$]
From Proposition 7.30, it follows by Serre\cite[Proposition~15]{S} that
\begin{equation}
\det(I-t\alpha_{\hat{\lambda}}\mid L_0^\Gamma) = \det(I-t\alpha^*_{\hat{\lambda}}\mid B),
\end{equation}
so to complete the proof of Theorem 7.29 it suffices to show that if $\bar{H}(\lambda)\neq 0$, then $\alpha^*_{\hat{\lambda}}$ has an eigenvector in $B$ with eigenvalue $q^{\mu+1}\prod_{i=0}^{a-1} {\mathcal G}(\hat{\lambda}^{p^i})$.  From Equation~(5.16) we have
\[ \alpha^*\bigg(\frac{G(\Lambda,x)}{G(\Lambda)}\bigg) = p^{\mu+1}{\mathcal G}(\Lambda)\frac{G(\Lambda,x)}{G(\Lambda)}. \]
It follows by iteration that for $m\geq 0$,
\begin{equation}
(\alpha^*)^m\bigg(\frac{G(\Lambda,x)}{G(\Lambda)}\bigg) = p^{m(\mu+1)}\bigg(\prod_{i=0}^{m-1} {\mathcal G}(\Lambda^{p^i})\bigg)\frac{G(\Lambda,x)}{G(\Lambda)}.
\end{equation}
From (4.15) we have
\[ \frac{G(\Lambda,x)}{G(\Lambda)} = \sum_{u\in M_-} \bigg(\gamma_0^{-(p-1)u_{n+1}}\frac{G_u(\Lambda)}{G(\Lambda)}\bigg)\gamma_0^{pu_{n+1}}x^u. \]
Put ${\mathcal G}_u(\Lambda) = G_u(\Lambda)/G(\Lambda)$.  By Proposition 5.15, ${\mathcal G}_u(\Lambda)\in R_u'$, i.e., ${\mathcal G}_u(\Lambda)$ is analytic on ${\mathcal D}_+$.  We may therefore evaluate the ${\mathcal G}_u(\Lambda)$ at $\Lambda = \hat{\lambda}$:
\[ \frac{G(\Lambda,x)}{G(\Lambda)}\bigg|_{\Lambda = \hat{\lambda}} = \sum_{u\in M_-} \big(\gamma_0^{-(p-1)u_{n+1}}{\mathcal G}_u(\hat{\lambda})\big)\gamma_0^{pu_{n+1}}x^u. \]
Since $\gamma_0^{-(p-1)u_{n+1}}\to 0$ as $u\to\infty$, this expression lies in $B$.  It is straightforward to check that the specialization of the left-hand side of Equation (7.34) with $m=a$ at $\Lambda = \hat{\lambda}$ is exactly $\alpha^*_{\hat{\lambda}}\big(G(\Lambda,x)/G(\Lambda)\big|_{\Lambda = \hat{\lambda}}\big)$, so specializing Equation (7.34) with $m=a$ at $\Lambda=\hat{\lambda}$ gives
\begin{multline}
\alpha^*_{\hat{\lambda}}\bigg(\sum_{u\in M_-} \big(\gamma_0^{-(p-1)u_{n+1}}{\mathcal G}_u(\hat{\lambda})\big)\gamma_0^{pu_{n+1}}x^u\bigg) = \\
q^{\mu+1}\bigg(\prod_{i=0}^{a-1} {\mathcal G}(\hat{\lambda}^{p^i})\bigg)\bigg(\sum_{u\in M_-} \big(\gamma_0^{-(p-1)u_{n+1}}{\mathcal G}_u(\hat{\lambda})\big)\gamma_0^{pu_{n+1}}x^u\bigg).
\end{multline}
Equation (7.35) shows that $q^{\mu+1}\prod_{i=0}^{a-1}{\mathcal G}(\hat{\lambda}^{p^i})$ is an eigenvalue of $\alpha^*_{\hat{\lambda}}$.
\end{proof}

\end{document}